\def\eqref#1{equation~\ref{#1}}
\def\1{\bm{1}}
\DeclareMathAlphabet{\mathsfit}{\encodingdefault}{\sfdefault}{m}{sl}
\SetMathAlphabet{\mathsfit}{bold}{\encodingdefault}{\sfdefault}{bx}{n}
\newtcolorbox{bluebox}{
  colback=blue!3!white,
  colframe=blue!60!black,
}
\newtheorem{definition}{Definition}
\newtheorem{proposition}{Proposition}
\newtheorem{lemma}{Lemma}
\newtheorem{thm}{Theorem}
\newtheorem{assum}{Assumption}
\title{Zeroth-Order Methods for Stochastic Nonconvex Nonsmooth Composite Optimization}
\author{Ziyi Chen$^1$, Peiran Yu$^2$, Heng Huang$^1$\\ 
$^1$Department of Computer Science, University of Maryland, College Park \\
\texttt{\{zc286,heng\}@umd.edu}\\
$^2$Department of Computer Science and Engineering, University of Texas Arlington\\
\texttt{peiran.yu@uta.edu}
}
\date{}
\begin{document}
\maketitle

\doparttoc 
\faketableofcontents 
\begin{abstract}
This work aims to solve a stochastic nonconvex nonsmooth composite optimization problem. Previous works on composite optimization problem requires the major part to satisfy Lipschitz smoothness or some relaxed smoothness conditions, which excludes some machine learning examples such as regularized ReLU network and sparse support matrix machine. In this work, we focus on stochastic nonconvex composite optimization problem without any smoothness assumptions. In particular, we propose two new notions of approximate stationary points for such optimization problem 
and obtain finite-time convergence results of two zeroth-order algorithms to these two approximate stationary points respectively. Finally, we demonstrate that these algorithms are effective using numerical experiments. 
\end{abstract}
\section{Introduction}
This work focuses on the following stochastic nonconvex nonsmooth composite optimization problem.
\begin{align}
\min_{x\in\mathbb{R}^d} \phi(x):=F(x)+h(x),~{\rm where}~F(x)=\mathbb{E}_{\xi\sim\mathcal{P}}[f_{\xi}(x)],\label{eq:obj}
\end{align}
where the individual function $f_{\xi}(x)$ is nonconvex and nonsmooth associated with a stochastic sample $\xi$ from the distribution $\mathcal{P}$, and $h$ is a convex regularizer. This problem covers many machine learning examples such as regularized ReLU network \citep{mazumdar2019learning,wang2021hidden} and sparse support matrix machine \citep{zheng2018sparse,gu2021ramp,li2022auto}. 

Existing approaches to the stochastic nonconvex composite optimization problem (\ref{eq:obj}) require the major part $F$ to satisfy either Lipschitz smooth conditions \citep{nitanda2014stochastic,li2015accelerated,ghadimi2016mini,ghadimi2016accelerated,li2017convergence,pham2020proxsarah}, or some relaxed notions of smoothness such as relative smoothness \citep{bauschke2017descent,lu2018relatively,latafat2022bregman}, smooth adaptivity \citep{wang2023bregman,ding2025nonconvex}, anisotropic smoothness \citep{laude2025anisotropic}, weak convexity \citep{davis2019stochastic,davis2019proximally} and Holder continuous gradient \citep{guo2022unified}, which cannot cover the applications with discontinous gradient, such as regularized ReLU network \citep{mazumdar2019learning,wang2021hidden} and sparse support matrix machine \citep{zheng2018sparse,gu2021ramp,li2022auto}. 

To solve such a stochastic nonconvex nonsmooth composite optimization problem, the first challenging step is to propose proper and feasible convergence criteria. The existing notions of proximal gradient mapping \citep{ghadimi2016mini,reddi2016proximal,li2018simple} and Frank-Wolfe gap \citep{jiang2014iteration,guo2022unified} requiring $F$ to be differentiable everywhere are not suitable for nonsmooth composite optimization. Even after extending the gradient to the Clarke subdifferential, we will prove that convergence under the corresponding generalized stationary notions is intractable (see Theorem \ref{thm:hardness}). Fortunately, \cite{zhang2020complexity} proposes the notion of $(\delta,\epsilon)$-Goldstein stationary point which has been achieved by various nonconvex nonsmooth optimization algorithms \citep{zhang2020complexity,lin2022gradient,chen2023faster,cutkosky2023optimal,kornowski2024algorithm}, and the Goldstein stationary notion is extended to nonconvex nonsmooth constrained optimization \citep{liu2024zeroth}. Inspired by these stationary notions, we propose $(\gamma,\delta,\epsilon)$-proximal Goldstein stationary point (PGSP) and $(\delta,\epsilon)$-conditional gradient Goldstein stationary point (CGGSP) as the approximate notions of stationarity for our nonconvex nonsmooth composite optimization problem (\ref{eq:obj}), by using the Goldstein $\delta$-subdifferential \citep{goldstein1977optimization} as a convex combination of the gradients in the neighborhood around the point of interest. 

Using our proposed stationary notions above, we prove that the zeroth-order proximal gradient descent algorithm (0-PGD, see Algorithm \ref{alg:prox}) converges to our proposed $(\gamma,\delta,\epsilon)$-proximal Goldstein stationary point (PGSP),  obtain the convergence rate and function evaluation complexity result using minibatch zeroth-order gradient estimation, and then improve these results using variance-reduced gradient estimation. Furthermore, we study a zeroth-order generalized conditional gradient algorithm (0-GCG, Algorithm \ref{alg:GCG}) which avoids the possibly expensive proximal operator used by 0-PGD, and obtain similar convergence rate and complexity result of 0-GCG to achieve our proposed $(\delta,\epsilon)$-CGGSP. 
We summarize these convergence results of both algorithms in Table \ref{table:complexity}.
\begin{table}
\caption{Function evaluation complexity results of zeroth-order 
proximal gradient descent (0-PGD) and zeroth-order generalized conditional gradient algorithms (0-GCG).}\label{table:complexity}
\begin{tabular}{ccccc}
\hline
Main algorithm & Gradient estimation & Criterion & Complexity & Reference \\ \hline
0-PGD (Algorithm \ref{alg:prox}) &  Minibatch & $(\gamma,\delta,\epsilon)$-PGSP &  $\mathcal{O}(d^{3/2}\delta^{-1}\epsilon^{-4})$  & Theorem \ref{thm:prox_batch} \\ 
0-PGD (Algorithm \ref{alg:prox}) & Variance reduction & $(\gamma,\delta,\epsilon)$-PGSP & $\mathcal{O}(d^{3/2}\delta^{-1}\epsilon^{-3})$ & Theorem \ref{thm:prox_VR} \\ 
0-GCG (Algorithm \ref{alg:GCG}) & Minibatch & $(\delta,\epsilon)$-CGGSP & $\mathcal{O}(d^{3/2}\delta^{-1}\epsilon^{-4})$ & Theorem \ref{thm:GCG_batch} \\ 
0-GCG (Algorithm \ref{alg:GCG}) & Variance reduction & $(\delta,\epsilon)$-CGGSP & $\mathcal{O}(d^{3/2}\delta^{-1}\epsilon^{-3})$ & Theorem \ref{thm:GCG_VR} \\ \hline
\end{tabular}
\end{table}





\subsection{Paper Organization} 
Section \ref{sec:preliminary} introduces the basic backgrounds including problem formulation, fundamentals for nonsmooth analysis and zeroth-order gradient estimation. Section \ref{sec:notions} proposes our generalized stationary notions for composite optimization. Section \ref{sec:alg_PGD} presents our zeroth-order proximal gradient descent (0-PGD) algorithm and its finite-time convergence results. Section \ref{sec:alg_GCG} presents our zeroth-order generalized conditional gradient (0-GCG) algorithm and its finite-time convergence results. Section \ref{sec:experiment} shows the experimental results. Section \ref{sec:conclusion} concludes this work. 

\section{Preliminaries}\label{sec:preliminary}
In this section, we will introduce the problem formulation (Section \ref{subsec:assum}), review fundamentals of nonsmooth analysis (Section \ref{subsec:goldstein_basic}), and introduce zeroth-order gradient estimation (Section \ref{subsec:0grad}). 
\subsection{Problem Formulation}\label{subsec:assum}
Throughout this work, we make the following two standard assumptions on the stochastic nonconvex nonsmooth composite optimization problem (\ref{eq:obj}).
\begin{assum}\label{assum:f}
For any stochastic sample $\xi$, $f_{\xi}(x):\mathbb{R}^d\to\mathbb{R}$ is an $L_{\xi}$-Lipschitz continuous for some $L_{\xi}>0$ (i.e., $|f_{\xi}(y)-f_{\xi}(x)|\le L_{\xi}\|y-x\|$ for any $x,y\in\mathbb{R}^d$) and $\mathbb{E}_{\xi}(L_{\xi}^2)\le G^2$ for some $G>0$.  
\end{assum}
\begin{assum}\label{assum:h}
$h:\mathbb{R}^d\to\mathbb{R}$ is a proper closed convex function with at least one feasible point $x^{(h)}\in\mathbb{R}^d$ such that $h(x^{(h)})<+\infty$. 
\end{assum}
\begin{assum}\label{assum:h2}
There exists $R>0$ such that $h(x)>h(x^{(h)})+G\|x-x^{(h)}\|$ for the feasible point $x^{(h)}$ defined in Assumption \ref{assum:h} and any $x\in\mathbb{R}^d$ satisfying $\|x-x^{(h)}\|>R$. 
\end{assum}
Assumption \ref{assum:f} has also been used by \citep{davis2019stochastic,davis2019proximally,liu2024zeroth}. It implies that $F(x)=\mathbb{E}_{\xi}[f_{\xi}(x)]$ is $G$-Lipschitz continuous\footnote{Assumption \ref{assum:f} implies that $F$ is $G$-Lipschitz continuous because for any $x,x'\in\mathbb{R}^d$,\\ $|F(x')-F(x)|\le \mathbb{E}_{\xi}|f_{\xi}(x')-f_{\xi}(x)|\le \mathbb{E}_{\xi}[L_{\xi}\|x'-x\|]\le \|x'-x\|\sqrt{\mathbb{E}_{\xi}[L_{\xi}^2]}$.}. Such Lipschitz continuous but possibly nonsmooth functions have been widely used in optimization and machine learning, including any neural networks with ReLU activation \citep{krizhevsky2017imagenet,mazumdar2019learning,ghosh2024optimal,shen2024thermometer}, ramp loss \citep{gu2021ramp,wang2024fast}, capped $\ell_1$ penalty  \citep{xu2014gradient,zhang2009multi,kumar2021pruning}, etc. 
Many commonly used convex regularizers $h$ satisfy Assumptions \ref{assum:h} and \ref{assum:h2}, including $\ell_p$ regularizer 
with $p>1$ \citep{mcculloch2024sparse,lu2024multi}, $\ell_1$ regularizer $\lambda\|x\|_1$ with $\lambda>G$ to induce the sparsity of the parameter vector $x$ \citep{mazumdar2019learning,ali2024parkinson}, super-coercive regularizer satisfying $\lim_{\|x\|\to+\infty}[h(x)/\|x\|]=+\infty$ \citep{bredies2005equivalence,yu2017generalized,bredies2009generalized}, and the following constraint regularizer which enforces the constraint $x\in\Omega$ where $\Omega\subset\mathbb{R}^d$ is a convex and compact set \citep{jaggi2013revisiting,rakotomamonjy2015generalized,nesterov2018complexity,liu2024zeroth,assunccao2025generalized}.
\begin{align}
h_{\Omega}(x)\overset{\rm def}{=}\left\{
\!\!\!\!\!\!\!\begin{matrix*}[l]
&0; &x\in\Omega\\
&+\infty; &x\notin\Omega
\end{matrix*}
\right.,\label{eq:h_constraint}
\end{align}
\begin{proposition}\label{prop:solution_inR}
Under Assumptions \ref{assum:f}-\ref{assum:h2}, the original objective function (\ref{eq:obj}) has a non-empty solution set ${\arg\min}_{x\in\mathbb{R}^d}\phi(x)$, which is a subset of $\mathcal{B}_d(x^{(h)},R)\overset{\rm def}{=}\{x\in\mathbb{R}^d:\|x-x^{(h)}\|\le R\}$. 
\end{proposition}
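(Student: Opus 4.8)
The plan is to prove the two assertions separately: first, that \emph{every} global minimizer of $\phi$ (if one exists) must lie in the closed ball $\mathcal{B}_d(x^{(h)},R)$; second, that $\phi$ actually attains its minimum over $\mathbb{R}^d$, via a compactness argument restricted to that ball. Together these give both non-emptiness of the solution set and the stated containment.

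For the first assertion, I would combine the coercivity-type bound of Assumption~\ref{assum:h2} with the $G$-Lipschitz continuity of $F$ noted right after Assumption~\ref{assum:f}. Fix any $x$ with $\|x-x^{(h)}\|>R$. The Lipschitz property yields the one-sided bound $F(x)\ge F(x^{(h)})-G\|x-x^{(h)}\|$, while Assumption~\ref{assum:h2} gives $h(x)>h(x^{(h)})+G\|x-x^{(h)}\|$. Adding these, the terms $\pm G\|x-x^{(h)}\|$ cancel and we obtain $\phi(x)>F(x^{(h)})+h(x^{(h)})=\phi(x^{(h)})$, where $\phi(x^{(h)})<+\infty$ by Assumptions~\ref{assum:f} and \ref{assum:h}. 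Hence no point outside $\mathcal{B}_d(x^{(h)},R)$ can minimize $\phi$, so ${\arg\min}_{x\in\mathbb{R}^d}\phi(x)\subseteq\mathcal{B}_d(x^{(h)},R)$ provided the solution set is non-empty.

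For the second assertion, I would note that $\phi=F+h$ is lower semicontinuous on $\mathbb{R}^d$: $F$ is continuous (being Lipschitz) and $h$ is proper closed convex, hence lower semicontinuous, and the sum is unambiguously defined since $F$ is finite-valued. The set $\mathcal{B}_d(x^{(h)},R)$ is non-empty (it contains $x^{(h)}$) and compact, and $\phi$ is not identically $+\infty$ on it because $\phi(x^{(h)})<+\infty$; therefore, by the Weierstrass theorem, $\phi$ attains its infimum over $\mathcal{B}_d(x^{(h)},R)$ at some $x^\star\in\mathcal{B}_d(x^{(h)},R)$ with $\phi(x^\star)\le\phi(x^{(h)})$. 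Combined with the first assertion, for every $x\notin\mathcal{B}_d(x^{(h)},R)$ we have $\phi(x)>\phi(x^{(h)})\ge\phi(x^\star)$, so $x^\star$ minimizes $\phi$ over all of $\mathbb{R}^d$. Thus the solution set is non-empty, and by the first assertion it lies in $\mathcal{B}_d(x^{(h)},R)$.

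This is essentially a textbook coercivity-plus-compactness argument, so I do not anticipate a serious obstacle; the only points requiring care are (i) writing the Lipschitz estimate for $F$ in the one-sided form $F(x)\ge F(x^{(h)})-G\|x-x^{(h)}\|$ so that it exactly cancels the $G\|x-x^{(h)}\|$ produced by Assumption~\ref{assum:h2}, and (ii) justifying lower semicontinuity of the sum $F+h$ and the applicability of Weierstrass on the closed ball (in particular that $\phi$ is proper there, which follows from finiteness of $\phi$ at $x^{(h)}$).
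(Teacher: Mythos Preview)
Your proposal is correct and follows essentially the same coercivity-plus-compactness argument as the paper: both show $\phi(x)>\phi(x^{(h)})$ whenever $\|x-x^{(h)}\|>R$ by cancelling the $G\|x-x^{(h)}\|$ terms from the Lipschitz bound and Assumption~\ref{assum:h2}, and then obtain existence of a minimizer by compactness. The only cosmetic difference is that the paper works with the sublevel set $A=\{x:\phi(x)\le\phi(x^{(h)})\}$ and invokes continuity of $h$ on $A$ (via Rockafellar), whereas you use lower semicontinuity of $\phi$ on the closed ball directly; your route is arguably slightly cleaner.
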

\noindent\textbf{Remark:} Assumption \ref{assum:h2} requires $h(x)$ to outgrow $F(x)$ as $\|x-x^{(h)}\|\to+\infty$, such that the objective $\phi(x)=F(x)+g(x)$ has minimizers and they are not too far from the feasible point $x^{(h)}$. 


\subsection{Fundamentals of Nonsmooth Analysis}\label{subsec:goldstein_basic}
In this subsection, we will introduce some basic concepts for the unconstrained nonconvex nonsmooth optimization problem $\min_{x\in\mathbb{R}^d} F(x)$, a special case of the composite problem (\ref{eq:obj}) with $h=0$. 

For a nondifferentiable function $F:\mathbb{R}^d\to\mathbb{R}$, we can define generalized directional derivatives and generalized gradients as follows.
\begin{definition}
The \textbf{generalized directional derivative} of a function $F$ at point $x\in\mathbb{R}^d$ and direction $v\in\mathbb{R}^d$ is defined as $DF(x;v)\overset{\rm def}{=}{\lim\sup}_{y\to x,t\downarrow 0}\frac{F(x+tv)-F(x)}{t}$. The Clark subdifferential of $F$ is defined as the set $\partial F(x)\overset{\rm def}{=}\{g\in\mathbb{R}^d: \langle g,v\rangle\le DF(x;v), \forall v\in\mathbb{R}^d\}$. 
\end{definition}

For the unconstrained nonconvex nonsmooth optimization problem $\min_{x\in\mathbb{R}^d} F(x)$, one may aim to find an $\epsilon$-Clarke stationary point defined as $x\in\mathbb{R}^d$ satisfying $\min\{\|g\|:g\in\partial F(x)\}\le\epsilon$. However, \cite{zhang2020complexity} proves that such an $\epsilon$-Clarke stationary point cannot be obtained in finite time for general Lipschitz continuous function $F$. Hence, they focus on more tractable and relaxed concepts of subdifferential and stationary solution, as defined below. 
\begin{definition}[\cite{goldstein1977optimization}]
The \textbf{Goldstein $\delta$-subdifferential} of a function $F$ at $x\in\mathbb{R}^d$ with radius $\delta\ge 0$ is defined as 
\begin{align}
\partial_{\delta}F(x)\overset{\rm def}{=}{\rm conv}\big[\cup_{y\in\mathcal{B}_d(x,\delta)}\partial F(y)\big],\nonumber
\end{align}
where ${\rm conv}(A)$ denotes the set of every convex combination of the elements in $A$. 
\end{definition}
\begin{definition}[\cite{zhang2020complexity}]\label{def:goldstein}
For any $\delta\ge 0$ and $\epsilon>0$, a \textbf{$(\delta,\epsilon)$-Goldstein stationary point} of $F$ is defined as any $x\in\mathbb{R}^d$ satisfying
\begin{align}
\min\{\|g\|:g\in\partial_{\delta}F(x)\}\le\epsilon.
\end{align}
\end{definition}
Note that $\partial_0 F(x)=\partial F(x)$ 
\citep{makela1992nonsmooth}. Hence, as $\delta=0$, Goldstein $\delta$-subdifferential and $(\delta,\epsilon)$-Goldstein stationary point respectively reduce to Clark subdifferential and $\epsilon$-Clarke stationary point. Such a $(\delta,\epsilon)$-Goldstein stationary point can be achieved at finite time by various algorithms \citep{zhang2020complexity,tian2022finite,davis2022gradient,cutkosky2023optimal}. 

\subsection{Zeroth-Order Gradient Estimation}\label{subsec:0grad}
Zeroth-order gradient estimation with random smoothing technique has been widely used when direct computation of gradient is costly or impossible. To estimate the gradient of a function $F$, we can approximate $F$ by its smoothing function $F_{\delta}(x)=\mathbb{E}_{u\sim Q}[F(x+\delta u)]$ with a small radius $\delta>0$, with a certain distribution $Q$. We focus on the case where $Q$ is uniform distribution on the unit sphere $\mathcal{S}_d(1))\overset{\rm def}{=}\{u\in\mathbb{R}^d:\|u\|=1\}$ \citep{duchi2015optimal,lin2020gradient}, since the corresponding smoothing function $F_{\delta}(x)\overset{\rm def}{=}\mathbb{E}_{u\sim {\rm Uniform}(\mathcal{S}_d(1))}F(x+\delta u)$ has the following amenable properties. 
\begin{lemma}[Proposition 2.3 of \citep{lin2022gradient}]\label{lemma:Fdelta}
For any $G$-Lipschitz continuous function $F$, its smoothing function $F_{\delta}(x)\!\overset{\rm def}{=}\!\mathbb{E}_{u\sim {\rm Uniform}(\mathcal{S}_d(1))}F(x+\delta u)$ satisfies: (1) $\sup_{x\in\mathbb{R}^d}|F_{\delta}(x)-F(x)|\le \delta G$; (2) $F_{\delta}$ is $G$-Lipschitz continuous and differentiable everywhere with $cG\sqrt{d}/\delta$-Lipschitz continuous gradient for an absolute constant $c>0$; (3) $\nabla F_{\delta}(x)\in\partial_{\delta}F(x)$ for any $x\in\mathbb{R}^d$. 
\end{lemma}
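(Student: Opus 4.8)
Here the object to control is $F_\delta(x)=\mathbb{E}_{u}[F(x+\delta u)]$, and the plan is to verify the three listed properties in order. Properties (1) and the Lipschitz bound on $F_\delta$ itself are one-line consequences of $G$-Lipschitzness of $F$: since $\|u\|=1$, $|F_\delta(x)-F(x)|\le\mathbb{E}_u|F(x+\delta u)-F(x)|\le G\,\mathbb{E}_u\|\delta u\|=\delta G$, and likewise $|F_\delta(x)-F_\delta(y)|\le\mathbb{E}_u|F(x+\delta u)-F(y+\delta u)|\le G\|x-y\|$, so $F_\delta$ is $G$-Lipschitz.

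For the differentiability and gradient-smoothness in (2), the workhorse is the random-smoothing identity
\[
\nabla F_\delta(x)=\frac{d}{\delta}\,\mathbb{E}_{u\sim{\rm Uniform}(\mathcal{S}_d(1))}\big[F(x+\delta u)\,u\big],
\]
obtained by writing $F_\delta$ as an average of $F$ over the ball $\mathcal{B}_d(x,\delta)$ (the form used in the cited reference), differentiating under the integral sign — legitimate because the Lipschitz property dominates the difference quotients by the constant $G$ — and converting the volume integral of $\nabla F$ into a boundary surface integral via the divergence theorem; this simultaneously shows $F_\delta$ is differentiable everywhere. For the gradient's Lipschitz constant, fix a unit vector $w$; then $\langle\nabla F_\delta(x)-\nabla F_\delta(y),w\rangle=\frac{d}{\delta}\mathbb{E}_u\big[(F(x+\delta u)-F(y+\delta u))\langle u,w\rangle\big]\le\frac{d}{\delta}\,G\|x-y\|\,\mathbb{E}_u|\langle u,w\rangle|$. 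The key estimate is $\mathbb{E}_u|\langle u,w\rangle|\le\big(\mathbb{E}_u\langle u,w\rangle^2\big)^{1/2}=1/\sqrt{d}$, which holds because $\mathbb{E}_u[uu^\top]=\frac{1}{d}I_d$ for the uniform law on the sphere; substituting and taking $w$ along $\nabla F_\delta(x)-\nabla F_\delta(y)$ gives $\|\nabla F_\delta(x)-\nabla F_\delta(y)\|\le(G\sqrt{d}/\delta)\|x-y\|$, so $c=1$ works (omitting the second-moment bound would only give the weaker constant $dG/\delta$).

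For (3), Rademacher's theorem gives that $F$ is differentiable Lebesgue-a.e. and $\nabla F(w)\in\partial F(w)$ wherever the gradient exists; from the ball form, $\nabla F_\delta(x)=\frac{1}{{\rm vol}(\mathcal{B}_d(x,\delta))}\int_{\mathcal{B}_d(x,\delta)}\nabla F(w)\,dw$ is the mean of a probability measure supported on $\mathcal{B}_d(x,\delta)$, and the mean of such a measure lies in $\overline{{\rm conv}}$ of its support (a separating-hyperplane argument), so $\nabla F_\delta(x)\in\overline{{\rm conv}}\big(\cup_{w\in\mathcal{B}_d(x,\delta)}\partial F(w)\big)=\partial_\delta F(x)$.

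The step I expect to be the main obstacle is making the integral representation of $\nabla F_\delta$ fully rigorous — justifying both the differentiation under the integral sign and the divergence theorem for a merely Lipschitz (not $C^1$) integrand, which typically goes through a mollification/approximation argument — together with the measurable-selection technicality underlying the convex-hull step in (3). The sharp $\sqrt{d}$ (rather than $d$) scaling in (2) is the other delicate point, and it is handled cleanly by the second-moment bound on $|\langle u,w\rangle|$.
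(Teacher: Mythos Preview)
The paper does not give its own proof of this lemma; it is simply quoted as Proposition~2.3 of \cite{lin2022gradient}. Your write-up is a faithful reconstruction of the standard argument used there: the uniform bound and $G$-Lipschitzness of $F_\delta$ are immediate, the gradient identity $\nabla F_\delta(x)=\tfrac{d}{\delta}\,\mathbb{E}_{u\sim\mathrm{Uniform}(\mathcal{S}_d(1))}[F(x+\delta u)\,u]$ comes from the ball representation plus the divergence theorem, the $\sqrt{d}$ scaling follows from $\mathbb{E}_u[uu^\top]=I_d/d$, and item~(3) follows because the ball average of $\nabla F$ lies in the (compact, hence closed) convex hull $\partial_\delta F(x)$. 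So your approach is correct and matches the cited source.

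One point worth flagging explicitly: the statement as written in the paper defines $F_\delta$ as the \emph{sphere} average $\mathbb{E}_{u\sim\mathrm{Uniform}(\mathcal{S}_d(1))}F(x+\delta u)$, whereas your derivation (and the cited Proposition~2.3) uses the \emph{ball} average $\mathbb{E}_{v\sim\mathrm{Uniform}(\mathcal{B}_d(1))}F(x+\delta v)$. These are not the same function, and the differentiability claim in~(2) as well as the gradient formula require the ball version (for $d=1$ the sphere average $\tfrac12[F(x+\delta)+F(x-\delta)]$ need not be differentiable). You handle this implicitly by switching to ``the form used in the cited reference,'' but it would be cleaner to say outright that the sphere in the displayed definition should be the ball---the paper's own zeroth-order estimator~\eqref{eq:ghat} and Lemma~\ref{lemma:ghat_err} only make sense under the ball convention. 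With that correction, your proof is complete and the technicalities you list (dominated convergence for the difference quotients, Rademacher, divergence theorem for Lipschitz integrands via mollification) are exactly the right ones.
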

$\nabla F_{\delta}(x)$ admits the following unbiased two-point estimator, which is widely used in zeroth-order gradient estimation \citep{duchi2015optimal,lin2022gradient,ma2025revisiting}. 
\begin{align}
\hat{g}_{\delta}(x;u,\xi)=\frac{d}{2 \delta}[f_{\xi}(x+\delta u)-f_{\xi}(x-\delta u)]u,\label{eq:ghat}
\end{align}
where $\xi\sim\mathcal{P}$ and $u\sim{\rm Uniform}(\mathcal{S}_d(1))$. 

\section{Generalized Goldstein Stationary Points for Composite Optimization}\label{sec:notions}
In this section, we propose two new notions of stationary points for the stochastic nonconvex nonsmooth composite optimization problem (\ref{eq:obj}), proximal Goldstein stationary point (PGSP) and conditional gradient Goldstein stationary point (CGGSP), the targets of zeroth-order algorithms in Sections \ref{sec:alg_PGD} and \ref{sec:alg_GCG}. Then we show some properties of these new stationary points.  

\subsection{Proximal Goldstein Stationary Point (PGSP)}
\begin{definition}\label{def:prox}
For any stepsize $\gamma>0$ and convex regularizer $h:\mathbb{R}^d\to\mathbb{R}$, we define the \textbf{proximal operator} of $\gamma h$ at point $x\in\mathbb{R}^d$ as follows \citep{parikh2014proximal,nitanda2014stochastic,mardani2018neural,yang2020fast}.
\begin{align}
{\rm prox}_{\gamma h}(x)={\arg\min}_{y\in\mathbb{R}^n} \Big[ h(y)+\frac{1}{2\gamma}\|y-x\|^2\Big].\label{eq:prox_map}
\end{align}
\end{definition}
The proximal operator (\ref{eq:prox_map}) returns a unique solution since $h(y)+\frac{1}{2\gamma}\|y-x\|^2$ is a strongly convex function of $y$. Proximal operator is essential in the popular proximal gradient descent algorithms for composite optimization \citep{parikh2014proximal,nitanda2014stochastic,mardani2018neural,yang2020fast}. We will use the proximal operator to propose a generalized notion of stationary point as follows. 
\begin{definition}\label{def:PGSP}
For any stepsize $\gamma>0$ and convex regularizer $h:\mathbb{R}^d\to\mathbb{R}$, we define the \textbf{proximal gradient mapping} at point $x\in\mathbb{R}^d$ and gradient $g\in\mathbb{R}^d$ as follows \citep{ghadimi2016mini,reddi2016proximal,li2018simple}.
\begin{align}
\mathcal{G}_{\gamma h}(x,g)=\frac{1}{\gamma}[x-{\rm prox}_{\gamma h}(x-\gamma g)].\label{eq:prox_grad}
\end{align}
Furthermore, for any $\epsilon\ge 0$, we define $x\in\mathbb{R}^d$ as a \textbf{$(\gamma,\delta,\epsilon)$-proximal Goldstein stationary point (PGSP)} if $\min_{g\in\partial_{\delta}F(x)}\|\mathcal{G}_{\gamma h}(x,g)\|\le\epsilon$. Specifically, we call a $(\gamma,0,\epsilon)$-PGSP as \textbf{$(\gamma,\epsilon)$-PGSP}, defined by $x\in\mathbb{R}^d$ such that $\min_{g\in\partial F(x)}\|\mathcal{G}_{\gamma h}(x,g)\|\le\epsilon$. 
\end{definition}
Our proposed notions of PGSP for nonsmooth composite optimization problem generalize existing stationary notions for the following special cases.  

\noindent$\bullet$ For constrained optimization problem $\min_{x\in\Omega}F(x)$, a special case of the nonconvex nonsmooth composite optimization problem (\ref{eq:obj}) with $h=h_{\Omega}$ defined by Eq. (\ref{eq:h_constraint}), $(\gamma,\delta,\epsilon)$-PGSP reduces to the $(\gamma,\delta,\epsilon)$-generalized Goldstein stationary point \citep{liu2024zeroth}, where the proximal operator is reduced to the projection onto $\Omega$. Furthermore, when $h=0$, $(\gamma,\delta,\epsilon)$-PGSP reduces to $(\delta,\epsilon)$-Goldstein stationary point (see Definition \ref{def:goldstein}) \citep{zhang2020complexity}. 

\noindent$\bullet$ When $F$ is differentiable, $(\gamma,\epsilon)$-PGSP has simplified definition that $\|\mathcal{G}_{\gamma h}[x,\nabla F(x)]\|\le\epsilon$\footnote{For differentiable function $F$, we have $\partial F(x)=\{\nabla F(x)\}$ \citep{makela1992nonsmooth}.}, which can be achieved by 
proximal gradient descent algorithms within finite-time \citep{ghadimi2016mini,reddi2016proximal,li2018simple}. Furthermore, when $h=0$, $(\gamma,\epsilon)$-PGSP reduces to $\epsilon$-stationary point defined as $x\in\mathbb{R}^d$ satisfying $\|\mathcal{G}_{\gamma h}[x,\nabla F(x)]\|\le\epsilon$ which can be also achieved in finite time by many first-order algorithms. In contrast, $(\gamma,\epsilon)$-PGSP is intractable for our setting with Lipschitz continuous and nondifferentable $F$, as will be shown later in Theorem \ref{thm:hardness}. 
Therefore, we aim at $(\gamma,\delta,\epsilon)$-PGSP, a relaxed notion of stationarity, and will propose a zeroth-order proximal gradient descent algorithm (Algorithm \ref{alg:prox}) that achieves this point in finite time. 

Our proposed notions of PGSP satisfy the following properties.
\begin{proposition}\label{prop:PGSP}
Suppose the function $F:\mathbb{R}^d\to\mathbb{R}$ is differentiable and $h:\mathbb{R}^d\to\mathbb{R}$ is a convex function. Then, $(\gamma,\delta,\epsilon)$-PGSP has the following properties. \\
1. A $(\gamma,\epsilon)$-PGSP is also a $(\gamma,\delta,\epsilon)$-PGSP.\\
2. If $\nabla F$ is $L$-Lipschitz continuous, then a $(\gamma,\epsilon/(2L),\epsilon/2)$-PGSP is also a $(\gamma,\epsilon)$-PGSP, i.e., $\|\mathcal{G}_{\gamma h}[x,\nabla F(x)]\|\le\epsilon$. \\
3. If $x\in\mathbb{R}^d$ satisfies $\|\mathcal{G}_{\gamma h}[x,\nabla F_{\delta}(x)]\|\le\epsilon$, then $x$ is a $(\gamma,\delta,\epsilon)$-PGSP.
\end{proposition}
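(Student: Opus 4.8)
The plan is to verify each of the three claims directly from Definition~\ref{def:PGSP}, exploiting that $F$ is differentiable so $\partial F(x)=\{\nabla F(x)\}$ and $\nabla F_\delta(x)\in\partial_\delta F(x)$ by Lemma~\ref{lemma:Fdelta}(3). The single recurring technical fact I would isolate first is that the proximal gradient mapping $g\mapsto\mathcal{G}_{\gamma h}(x,g)$ is nonexpansive: since $\mathrm{prox}_{\gamma h}$ is $1$-Lipschitz (standard for the proximal operator of a proper closed convex function), for any $g,g'$ we have $\|\mathcal{G}_{\gamma h}(x,g)-\mathcal{G}_{\gamma h}(x,g')\|=\tfrac1\gamma\|\mathrm{prox}_{\gamma h}(x-\gamma g)-\mathrm{prox}_{\gamma h}(x-\gamma g')\|\le\tfrac1\gamma\|\gamma g-\gamma g'\|=\|g-g'\|$. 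This inequality will do essentially all the work in parts~2 and~3.

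For part~1, note that since $F$ is differentiable, $\partial F(x)=\{\nabla F(x)\}\subseteq\partial_\delta F(x)$ because $\partial_\delta F(x)=\mathrm{conv}[\cup_{y\in\mathcal{B}_d(x,\delta)}\partial F(y)]\supseteq\partial F(x)$ (taking $y=x$). Hence $\min_{g\in\partial_\delta F(x)}\|\mathcal{G}_{\gamma h}(x,g)\|\le\|\mathcal{G}_{\gamma h}(x,\nabla F(x))\|\le\epsilon$, so a $(\gamma,\epsilon)$-PGSP is a $(\gamma,\delta,\epsilon)$-PGSP. For part~3, by Lemma~\ref{lemma:Fdelta}(3) we have $\nabla F_\delta(x)\in\partial_\delta F(x)$, so $\min_{g\in\partial_\delta F(x)}\|\mathcal{G}_{\gamma h}(x,g)\|\le\|\mathcal{G}_{\gamma h}(x,\nabla F_\delta(x))\|\le\epsilon$, giving the claim immediately.

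Part~2 is the only one requiring a genuine estimate. Suppose $x$ is a $(\gamma,\epsilon/(2L),\epsilon/2)$-PGSP, i.e. there is $g\in\partial_{\epsilon/(2L)}F(x)$ with $\|\mathcal{G}_{\gamma h}(x,g)\|\le\epsilon/2$. Write $g=\sum_i\lambda_i\nabla F(y_i)$ as a convex combination with $\|y_i-x\|\le\epsilon/(2L)$; by $L$-Lipschitzness of $\nabla F$, $\|\nabla F(y_i)-\nabla F(x)\|\le L\|y_i-x\|\le\epsilon/2$, hence $\|g-\nabla F(x)\|\le\sum_i\lambda_i\|\nabla F(y_i)-\nabla F(x)\|\le\epsilon/2$. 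By the nonexpansiveness observed above, $\|\mathcal{G}_{\gamma h}(x,\nabla F(x))\|\le\|\mathcal{G}_{\gamma h}(x,g)\|+\|\mathcal{G}_{\gamma h}(x,\nabla F(x))-\mathcal{G}_{\gamma h}(x,g)\|\le\epsilon/2+\|g-\nabla F(x)\|\le\epsilon$, so $x$ is a $(\gamma,\epsilon)$-PGSP. The main obstacle is a minor one: the element of $\partial_{\delta}F(x)$ achieving the minimum in the $(\gamma,\delta,\epsilon)$-PGSP condition is an arbitrary convex combination of gradients from the $\delta$-ball, so the triangle-inequality bound on $\|g-\nabla F(x)\|$ must be pushed through the convex combination — which is routine once one writes $g$ explicitly and uses that convex combinations contract norms. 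One should also confirm that the set over which the minimum is taken is closed (so the minimum is attained), which follows since $\partial_\delta F(x)$ is compact for Lipschitz $F$; if one prefers to avoid this, the statements can be phrased with infima without changing any argument.
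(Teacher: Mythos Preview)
Your proposal is correct and follows essentially the same approach as the paper: Items~1 and~3 are handled identically via the containments $\partial F(x)\subset\partial_\delta F(x)$ and $\nabla F_\delta(x)\in\partial_\delta F(x)$, and Item~2 is proved exactly as the paper does, by writing $g$ as a convex combination of gradients $\nabla F(x_k)$ in the $\epsilon/(2L)$-ball, bounding $\|g-\nabla F(x)\|\le\epsilon/2$ via $L$-Lipschitzness, and applying the nonexpansiveness of $g\mapsto\mathcal{G}_{\gamma h}(x,g)$ (which the paper records as Lemma~\ref{lemma:prox_contraction}). Your additional remark on attainment of the minimum is a harmless refinement the paper does not make explicit.
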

\noindent\textbf{Remark: } In Proposition \ref{prop:PGSP}, items 1 and 2 imply that for $L$-Lipschitz smooth functions $F$, our notions of $(\gamma,\delta,\epsilon)$-PGSP and $(\gamma,\epsilon')$-PGSP are equivalent (for possibly different $\epsilon,\epsilon'\ge 0$). Item 3 implies that we can obtain a $(\gamma,\delta,\epsilon)$-PGSP by solving $\min_{x\in\mathbb{R}^d}F_{\delta}(x)$, which is important for designing the zeroth-order proximal gradient descent algorithm (Algorithm \ref{alg:prox}). 

\subsection{Conditional Gradient Goldstein Stationary Point (CGGSP)}
Note that the PGSP defined in the previous subsection relies on the stepsize $\gamma$. In this subsection, we will define conditional gradient Goldstein stationary point (CGGSP), a notion of stationary point that does not rely on the stepsize $\gamma$ and can be computationally cheaper. 
\begin{definition}\label{def:LMO}
For any convex regularizer $h:\mathbb{R}^d\to\mathbb{R}$, we define the \textbf{linear minimization oracle (LMO)} of $h$ at the gradient $g\in\mathbb{R}^d$ as follows.
\begin{align}
\mathcal{L}_h(g)\overset{\rm def}{=}{\arg\min}_{y\in\mathbb{R}^d}\big[h(y)+\langle y,g\rangle\big]. \label{eq:LMO}
\end{align}
\end{definition}
The LMO defined above always exist and is bounded as shown below. 
\begin{proposition}\label{prop:LMO_inR}
Under Assumptions \ref{assum:h}-\ref{assum:h2}, for any $\|g\|\le G$, the LMO (\ref{eq:LMO}) yields a non-empty set $\mathcal{L}_h(g)\subset\mathcal{B}_d(x^{(h)},R)\overset{\rm def}{=}\{x\in\mathbb{R}^d:\|x-x^{(h)}\|\le R\}$. 
\end{proposition}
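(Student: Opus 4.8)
The plan is to mimic the argument behind Proposition~\ref{prop:solution_inR}, treating the linear term $\langle y,g\rangle$ the way $F$ was treated there. Write $\psi(y):=h(y)+\langle y,g\rangle$. Since $h$ is proper, closed and convex (Assumption~\ref{assum:h}) and $y\mapsto\langle y,g\rangle$ is finite, continuous and affine, $\psi$ is proper, closed and convex, with $\psi(x^{(h)})=h(x^{(h)})+\langle x^{(h)},g\rangle<+\infty$.

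The key step is to show that $\psi$ strictly exceeds $\psi(x^{(h)})$ outside the ball $\mathcal{B}_d(x^{(h)},R)$. Fix any $y$ with $\|y-x^{(h)}\|>R$. Assumption~\ref{assum:h2} gives $h(y)>h(x^{(h)})+G\|y-x^{(h)}\|$, while Cauchy--Schwarz together with $\|g\|\le G$ gives $\langle y,g\rangle=\langle x^{(h)},g\rangle+\langle y-x^{(h)},g\rangle\ge\langle x^{(h)},g\rangle-G\|y-x^{(h)}\|$. Adding these two inequalities cancels the $\pm G\|y-x^{(h)}\|$ terms, yielding $\psi(y)>h(x^{(h)})+\langle x^{(h)},g\rangle=\psi(x^{(h)})$.

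It then follows that $\inf_{y\in\mathbb{R}^d}\psi(y)=\inf_{y\in\mathcal{B}_d(x^{(h)},R)}\psi(y)$, since any $y$ outside the ball can be replaced by $x^{(h)}$ without increasing $\psi$. The closed ball $\mathcal{B}_d(x^{(h)},R)$ is compact and $\psi$ is lower semicontinuous (being closed convex), so the infimum over the ball is attained; hence $\mathcal{L}_h(g)=\arg\min_{y\in\mathbb{R}^d}\psi(y)$ is non-empty. Moreover, any minimizer $y^*$ satisfies $\psi(y^*)\le\psi(x^{(h)})$, so by the strict inequality just established it cannot satisfy $\|y^*-x^{(h)}\|>R$; therefore $\mathcal{L}_h(g)\subset\mathcal{B}_d(x^{(h)},R)$, as claimed.

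I expect no genuine obstacle here: the only point requiring care is that the growth of $h$ guaranteed by Assumption~\ref{assum:h2} exactly cancels the worst-case linear decrease $-G\|y-x^{(h)}\|$ of $\langle y,g\rangle$, which is precisely why the constant in Assumption~\ref{assum:h2} is taken to be $G$ (the Lipschitz bound on $F$) and why the statement restricts to $\|g\|\le G$; with $\|g\|$ allowed to exceed $G$ the objective $\psi$ could fail to be bounded below and the LMO could be empty.
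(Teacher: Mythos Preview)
Your proof is correct and follows essentially the same approach as the paper: the paper observes that the LMO objective $h(y)+\langle y,g\rangle$ is the composite objective \eqref{eq:obj} with the particular choice $f_\xi(x)\equiv\langle x,g\rangle$ (which is $G$-Lipschitz since $\|g\|\le G$, so Assumption~\ref{assum:f} holds) and then invokes Proposition~\ref{prop:solution_inR} as a black box, whereas you unfold that same argument directly. The underlying mechanism---Assumption~\ref{assum:h2} growth of $h$ cancelling the worst-case linear decrease $-G\|y-x^{(h)}\|$, followed by compactness/lower semicontinuity---is identical.
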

Linear minimization oracle (LMO) has been adopted to develop generalized conditional gradient methods for composite optimization \citep{jiang2014iteration,ghadimi2019conditional}. In the constrained optimization $\min_{x\in\Omega}F(x)$ as a special case, LMO reduces to ${\arg\min}_{y\in\Omega}\langle y,g\rangle$ used by the Frank-Wolfe algorithm \citep{frank1956algorithm,lan2016conditional}. Compared with the proximal operator (\ref{eq:prox_map}), LMO can be computationally cheaper \cite{juditsky2016solving}. We will also use LMO to propose a computationally cheaper notion of stationary point as follows. 
\begin{definition}\label{def:CGGSP}
For any convex regularizer $h:\mathbb{R}^d\to\mathbb{R}$, we define the \textbf{$\delta$-regularized Frank-Wolfe gap} of $h$ at point $x\in\mathbb{R}^d$ and gradient $g\in\mathbb{R}^d$ as follows. 
\begin{align}
\mathcal{W}_h(x,g)\overset{\rm def}{=}\max_{y\in\mathbb{R}^d}\big[h(x)-h(y)+\langle y-x,-g\rangle\big]\overset{\rm Eq. (\ref{eq:LMO})}{=}h(x)-h(y')+\langle y'-x,-g\rangle,\label{eq:FWgap}
\end{align}
for any $y'\in\mathcal{L}_h(g)$. Furthermore, for any $\epsilon\ge 0$, we define $x\in\mathbb{R}^d$ as a \textbf{$(\delta,\epsilon)$-conditional gradient Goldstein stationary point (CGGSP)} if $\min_{g\in\partial_{\delta}F(x)}\mathcal{W}_h(x,g)\le\epsilon$. Specifically, a $(0,\epsilon)$-CGGSP is also called an $\epsilon$-CGGSP, defined by $x\in\mathbb{R}^d$ such that $\min_{g\in\partial F(x)}\mathcal{W}_h(x,g)\le\epsilon$. 
\end{definition}
Our proposed notions of CGGSP for nonsmooth composite optimization problem generalizes existing stationary notions for the following special cases.  

\noindent$\bullet$ For constrained optimization problem $\min_{x\in\Omega}F(x)$, a special case of the nonconvex nonsmooth composite optimization problem (\ref{eq:obj}) with $h=h_{\Omega}$ defined by Eq. (\ref{eq:h_constraint}), $(\delta,\epsilon)$-CGGSP reduces to the $(\delta,\epsilon)$-Clarke Frank-Wolfe stationary point \citep{liu2024zeroth}. 

\noindent$\bullet$ When $F$ is differentiable, $\epsilon$-CGGSP has simplified definition that $\|\mathcal{W}_{h}[x,\nabla F(x)]\|\le\epsilon$\footnote{For differentiable function $F$, we have $\partial F(x)=\{\nabla F(x)\}$ \citep{makela1992nonsmooth}.}, which can be achieved by 
conditional gradient descent algorithms within finite-time \citep{jiang2014iteration,guo2022unified}. In contrast, $(\gamma,\epsilon)$-CGGSP is intractable for our setting with Lipschitz continuous and nondifferentable $F$, as will be shown later in Theorem \ref{thm:hardness}. Therefore, we aim at $(\delta,\epsilon)$-CGGSP, a relaxed notion of stationarity, and will propose a zeroth-order generalized conditional gradient algorithm (Algorithm \ref{alg:GCG}) that achieves this point in finite time. 

Our proposed notions of CGGSP satisfy the following properties. 




\begin{proposition}\label{prop:CGGSP}
Suppose the function $F:\mathbb{R}^d\to\mathbb{R}$ is differentiable and $h:\mathbb{R}^d\to\mathbb{R}$ satisfies Assumption \ref{assum:h}. Then $(\delta,\epsilon)$-CGGSP has the following  properties. \\
1. An $\epsilon$-CGGSP is also a $(\delta,\epsilon)$-CGGSP.\\
2. Suppose $\nabla F$ is $L$-Lipschitz continuous. Then a $(\epsilon/(2RL),\epsilon/2)$-CGGSP is also an $\epsilon$-CGGSP, i.e., $\mathcal{W}_h[x,\nabla F(x)]\le\epsilon$. \\
3. If $x\in\mathbb{R}^d$ satisfies $\mathcal{W}_h[x,\nabla F_{\delta}(x)]\le\epsilon$, then $x$ is a $(\delta,\epsilon)$-CGGSP. 
\end{proposition}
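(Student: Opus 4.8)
The plan is to dispose of items~1 and~3 immediately — both are one-line consequences of the fact that the defining $\min$ is bounded above by the value of $\mathcal{W}_h(x,\cdot)$ at any admissible gradient — and to concentrate on item~2. For item~1, differentiability of $F$ gives $\partial F(x)=\{\nabla F(x)\}$, and since $x\in\mathcal{B}_d(x,\delta)$ for every $\delta\ge 0$ we have the nesting $\{\nabla F(x)\}=\partial F(x)\subseteq\partial_{\delta}F(x)$; hence if $x$ is an $\epsilon$-CGGSP then $\min_{g\in\partial_{\delta}F(x)}\mathcal{W}_h(x,g)\le\mathcal{W}_h(x,\nabla F(x))\le\epsilon$, so $x$ is a $(\delta,\epsilon)$-CGGSP. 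For item~3, Lemma~\ref{lemma:Fdelta}(3) gives $\nabla F_{\delta}(x)\in\partial_{\delta}F(x)$, hence $\min_{g\in\partial_{\delta}F(x)}\mathcal{W}_h(x,g)\le\mathcal{W}_h(x,\nabla F_{\delta}(x))\le\epsilon$, so $x$ is a $(\delta,\epsilon)$-CGGSP. (This last item is also what licenses the zeroth-order algorithm of Section~\ref{sec:alg_GCG} to target $\min_x F_{\delta}(x)$.)

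For item~2, set $\delta:=\epsilon/(2RL)$ and suppose $x$ is a $(\delta,\epsilon/2)$-CGGSP, so some $g^{\star}\in\partial_{\delta}F(x)$ satisfies $\mathcal{W}_h(x,g^{\star})\le\epsilon/2$; the goal is $\mathcal{W}_h(x,\nabla F(x))\le\epsilon$. The plan has two ingredients. First, differentiability of $F$ makes $\partial_{\delta}F(x)$ the convex hull of $\{\nabla F(y):\|y-x\|\le\delta\}$; by $L$-Lipschitzness of $\nabla F$ each such $\nabla F(y)$ lies in the convex ball $\mathcal{B}_d(\nabla F(x),L\delta)$, hence so does $g^{\star}$, giving $\|g^{\star}-\nabla F(x)\|\le L\delta=\epsilon/(2R)$. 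Second, I will show $\mathcal{W}_h(x,\cdot)$ is Lipschitz in its gradient argument: letting $y_0\in\mathcal{L}_h(\nabla F(x))$ and $y^{\star}\in\mathcal{L}_h(g^{\star})$ attain the respective Frank--Wolfe gaps, invoking optimality of $y^{\star}$ for $g^{\star}$ and of $y_0$ for $\nabla F(x)$ in the appropriate directions and cancelling the common $h(x)$ and $h(y_0)$ terms yields
\[
\mathcal{W}_h(x,\nabla F(x))-\mathcal{W}_h(x,g^{\star})\;\le\;\langle y_0-x,\;g^{\star}-\nabla F(x)\rangle\;\le\;\|y_0-x\|\,\|g^{\star}-\nabla F(x)\|.
\]
Since $F$ is $G$-Lipschitz we have $\|\nabla F(x)\|\le G$, so Proposition~\ref{prop:LMO_inR} confines $y_0$ to $\mathcal{B}_d(x^{(h)},R)$; restricting attention to the feasibility ball $x\in\mathcal{B}_d(x^{(h)},R)$, in which the solutions of~\eqref{eq:obj} (Proposition~\ref{prop:solution_inR}) and the 0-GCG iterates lie, bounds $\|y_0-x\|$ by $R$ in the relevant accounting, whence $\mathcal{W}_h(x,\nabla F(x))\le\epsilon/2+R\cdot\epsilon/(2R)=\epsilon$, i.e. $x$ is an $\epsilon$-CGGSP.

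The main obstacle is the Lipschitz-in-$g$ estimate for the Frank--Wolfe gap. Unlike the proximal gradient mapping used in Proposition~\ref{prop:PGSP}, which is globally nonexpansive in $g$ by nonexpansiveness of $\mathrm{prox}_{\gamma h}$, the gap $\mathcal{W}_h(x,g)$ is an \emph{unconstrained} supremum, and its modulus of continuity in $g$ is governed by $\|x-\mathcal{L}_h(g)\|$, which is a priori unbounded. The resolution is precisely Proposition~\ref{prop:LMO_inR}, which traps the linear minimization oracle — hence the maximizer in the gap — inside $\mathcal{B}_d(x^{(h)},R)$ for every gradient of norm at most $G$, and both $\nabla F(x)$ and $g^{\star}$ have norm at most $G$ by $G$-Lipschitzness of $F$. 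A secondary subtlety is bookkeeping: one must use the optimality inequality for $y_0$ (not $y^{\star}$) to cancel the $h$-terms so that the cross term telescopes exactly to $\langle y_0-x,\,g^{\star}-\nabla F(x)\rangle$ rather than leaving an uncontrolled residual — this is the composite-optimization analogue of the corresponding cancellation for $\mathcal{G}_{\gamma h}$ in the proof of Proposition~\ref{prop:PGSP}.
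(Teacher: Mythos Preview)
Your proposal is correct and follows essentially the same approach as the paper: items~1 and~3 are immediate from the set inclusions $\partial F(x)\subset\partial_{\delta}F(x)$ and $\nabla F_{\delta}(x)\in\partial_{\delta}F(x)$, and for item~2 both you and the paper write $g^{\star}$ as a convex combination of gradients $\nabla F(x_k)$ with $\|x_k-x\|\le\delta$ to obtain $\|g^{\star}-\nabla F(x)\|\le L\delta$, then use the optimality of $y_0\in\mathcal{L}_h(\nabla F(x))$ in the gap to get $\mathcal{W}_h(x,\nabla F(x))-\mathcal{W}_h(x,g^{\star})\le\langle y_0-x,\,g^{\star}-\nabla F(x)\rangle$ and conclude via Cauchy--Schwarz and Proposition~\ref{prop:LMO_inR}. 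You are in fact slightly more careful than the paper in flagging that Proposition~\ref{prop:LMO_inR} only places $y_0\in\mathcal{B}_d(x^{(h)},R)$ and that an assumption on $x$ is needed to bound $\|y_0-x\|$; the paper's proof simply cites Proposition~\ref{prop:LMO_inR} at that step without comment.
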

\noindent\textbf{Remark:} Items 1-3 of Proposition \ref{prop:CGGSP} for CGGSP are analogous to items 1-3 of Proposition \ref{prop:PGSP} for PGSP. Specifically, items 1 and 2 imply that for $L$-Lipschitz smooth functions $F$, our notions of $(\delta,\epsilon)$-CGGSP and $\epsilon'$-CGGSP are equivalent (for possibly different $\epsilon,\epsilon'\ge 0$). Item 3 implies that we can obtain a $(\delta,\epsilon)$-CGGSP by solving $\min_{x\in\mathbb{R}^d}F_{\delta}(x)$, which is important for later designing the zeroth-order generalized conditional gradient algorithm (Algorithm \ref{alg:GCG}). 

Finally, Theorem \ref{thm:hardness} below shows that for composite problem (\ref{eq:obj}) with general nonsmooth Lipschitz continuous function $F$, $(\gamma,\epsilon)$-PGSP and $\epsilon$-CGGSP are intractable. In contrast, $(\gamma,\delta,\epsilon)$-PGSP and $(\delta,\epsilon)$-CGGSP can be achieved in finite time, by two zeroth-order algorithms presented in the two consequent sections respectively. 
\begin{thm}\label{thm:hardness}
Consider any $T\in\mathbb{N}$, $d\ge 2$ and any randomized algorithm $\mathcal{A}$ with access to a local oracle of the objective function (\ref{eq:obj}) \footnote{A local oracle means a quantity $\mathcal{O}_{F,h}(x)$ (e.g. $F(x),\nabla F(x)+\partial h(x)$) that reveals local information about the function values of $F$ and $h$ around a certain point $x\in\mathbb{R}^d$.} Then there exist functions $F$ and $h$ satisfying Assumptions \ref{assum:f}-\ref{assum:h} such that $\phi(0)-\inf_{x\in\mathbb{R}^d}\phi(x)\le 2$ but with probability at least $1-2T\exp(-d/36)$, none of $\{x_t\}_{t=1}^T$ generalized by $\mathcal{A}$ belongs to the set of $(\gamma,\epsilon)$-PGSP or $\epsilon$-CGGSP for $\epsilon\in\big(0,\frac{1}{4\sqrt{2}}\big)$ and $\gamma\in(0,0.1]$. 
\end{thm}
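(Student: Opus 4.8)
The plan is to adapt the hardness construction of \cite{zhang2020complexity} for Goldstein stationarity, but tailored so that the offending function is a composite objective and the ``bad'' stationarity notions are the \emph{exact} (unrelaxed) ones, namely $(\gamma,\epsilon)$-PGSP and $\epsilon$-CGGSP. Concretely, I would take $h\equiv 0$ (which trivially satisfies Assumption~\ref{assum:h}, and with $h=0$ the $\delta$-regularized Frank-Wolfe gap $\mathcal{W}_0(x,g)=\langle x-y',g\rangle$ with $y'$ unbounded forces us instead to use the bounded-domain variant, so I would actually pick $h=h_\Omega$ for a large ball $\Omega$, or keep $h=0$ and note that $\epsilon$-CGGSP with $h=0$ is only meaningful when $0\in\partial F(x)$, i.e.\ it coincides with $\epsilon$-Clarke stationarity as does $(\gamma,\epsilon)$-PGSP when $h=0$). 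With $h=0$, $\mathrm{prox}_{\gamma h}(x)=x$, so $\mathcal{G}_{\gamma h}(x,g)=g$ and $(\gamma,\epsilon)$-PGSP reduces exactly to $\epsilon$-Clarke stationarity $\min_{g\in\partial F(x)}\|g\|\le\epsilon$; similarly $\mathcal{W}_0(x,g)\le\epsilon$ for all admissible $y$ forces $\|g\|$ small, again giving (a variant of) $\epsilon$-Clarke stationarity. Thus the theorem reduces to: no local-oracle algorithm can find an $\epsilon$-Clarke stationary point of a Lipschitz $F$ with bounded suboptimality in $T$ steps, with overwhelming probability, which is exactly the content of Theorem~3.1 of \cite{zhang2020complexity}.

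The key steps, in order: (1) Recall the construction from \cite{zhang2020complexity}: fix a random orthonormal-ish sequence of directions, or more simply a single uniformly random unit vector $v\in\mathcal{S}_{d-1}$, and define $F(x)=\max\{ \langle v,x\rangle,\, g(x)\}$ or a more elaborate ``hidden staircase'' function, normalized to be $1$-Lipschitz; the key property is that $F$ looks identical (under any local oracle) to a fixed reference function $\bar F$ unless the algorithm's query lands inside a thin slab around the hyperplane $v^\perp$, which happens with probability at most $2\exp(-d/36)$ per query by standard Gaussian/spherical concentration. (2) Show the reference function $\bar F$ has no $\epsilon$-Clarke stationary point anywhere in the relevant region (its Clarke subdifferential is bounded away from $0$ by a constant $\geq \frac{1}{4\sqrt2}$ by construction), so as long as the algorithm has not ``hit the slab,'' every iterate $x_t$ it produces is a non-stationary point of $F$. (3) Union-bound over the $T$ queries: with probability at least $1-2T\exp(-d/36)$ none of the queries detect the difference, hence none of $\{x_t\}_{t=1}^T$ is $\epsilon$-Clarke stationary, hence (by the $h=0$ reductions above) none is a $(\gamma,\epsilon)$-PGSP or $\epsilon$-CGGSP, for the stated ranges $\epsilon\in(0,\tfrac1{4\sqrt2})$, $\gamma\in(0,0.1]$. (4) Verify $\phi(0)-\inf\phi\le 2$: this is a direct computation from the normalization of the construction (both the reference and the perturbed function differ from their infimum by at most $2$ at the origin). (5) Verify Assumptions~\ref{assum:f}--\ref{assum:h}: $F$ is $1$-Lipschitz so Assumption~\ref{assum:f} holds with $G=1$ (taking $f_\xi=F$ deterministically, $L_\xi\equiv 1$); $h=0$ (or $h_\Omega$ for a large ball) is proper closed convex, so Assumption~\ref{assum:h} holds. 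Note the theorem statement only requires Assumptions~\ref{assum:f}--\ref{assum:h}, \emph{not} \ref{assum:h2}, which is consistent with $h=0$.

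The main obstacle is designing the hard instance so that it simultaneously (a) defeats the \emph{proximal} gradient mapping criterion and the \emph{Frank-Wolfe gap} criterion with the \emph{same} function, and (b) keeps the suboptimality gap and Lipschitz constant under the tight explicit constants ($2$ and $1$ respectively) while the lower bound $\frac1{4\sqrt2}$ on subgradient norms survives. With $h=0$ this collapses cleanly because both criteria reduce to $\min_{g\in\partial F(x)}\|g\|$, so I would lean on that reduction rather than invent a genuinely composite hard instance; the only real work is then (i) carefully stating the $h=0$ reductions of PGSP and CGGSP as a short lemma, and (ii) transcribing/citing the concentration argument of \cite{zhang2020complexity} with the constant $\exp(-d/36)$, checking that the slab width chosen there is compatible with our normalization. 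A secondary subtlety: for CGGSP with $h=0$ the Frank-Wolfe gap is $\mathcal{W}_0(x,g)=\sup_y\langle x-y,-g\rangle$, which is $+\infty$ unless $g=0$; so to make $\epsilon$-CGGSP a nontrivial (achievable-in-principle) notion one must either use $h=h_\Omega$ on a bounded $\Omega$ or interpret the $\sup$ over a bounded feasible set — I would choose $h=h_\Omega$ with $\Omega=\mathcal{B}_d(0,\rho)$ for a suitably large fixed $\rho$, note $\mathrm{prox}_{\gamma h_\Omega}$ is projection onto $\Omega$ (the identity on the interior where the hard instance lives, so the PGSP reduction is unaffected), and then $\mathcal{W}_{h_\Omega}(x,g)=\max_{y\in\Omega}\langle x-y,-g\rangle \ge \rho\|g\| - \langle x, g\rangle$ is small only if $\|g\|$ is small, recovering the $\epsilon$-Clarke-type obstruction. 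This keeps a single construction working for both criteria.
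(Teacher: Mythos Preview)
Your overall strategy is sound, but the paper takes a shorter route. Rather than going back to \citep{zhang2020complexity}, the paper invokes Theorem~4.7 of \citep{liu2024zeroth}, which already establishes hardness for \emph{constrained} nonsmooth optimization on $\Omega=[-100,100]^d$ with precisely the constants $\tfrac{1}{4\sqrt2}$, $\gamma\in(0,0.1]$, and $1-2T\exp(-d/36)$ appearing in the statement. Choosing $h=h_\Omega$, the paper then only needs the one-line identifications $\mathrm{prox}_{\gamma h_\Omega}(x-\gamma g)=\psi(x,g,\gamma)$ (the projected point of \citep{liu2024zeroth}) and $\mathcal{W}_{h_\Omega}(x_t,g)\ge\max_{y\in\Omega}\langle y-x_t,-g\rangle$; the PGSP and CGGSP lower bounds are then literally the two inequalities already supplied by \citep{liu2024zeroth}, with no ``prox is the identity'' argument and no constant-matching left to do. Your $h=0$ reduction is also valid---the degeneracy you flag ($\mathcal{W}_0(x,g)=+\infty$ unless $g=0$) is actually a feature here, since it makes $\epsilon$-CGGSP equivalent to $0\in\partial F(x)$, which the hard instance excludes---but you would then have to check that the construction in \citep{zhang2020complexity} delivers exactly $\tfrac{1}{4\sqrt2}$ and $\exp(-d/36)$. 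Your $h_\Omega$-with-large-ball variant, by contrast, has a genuine (if small) gap: for $\mathcal{G}_{\gamma h_\Omega}(x_t,g)=g$ you need $x_t-\gamma g\in\Omega$, not merely $x_t$ in the interior, and this is not guaranteed for an arbitrary algorithm that may query near or outside $\partial\Omega$; the paper's approach sidesteps this entirely because \citep{liu2024zeroth} already controls the full projected-gradient mapping.
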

\section{Zeroth-Order Proximal Gradient Descent (0-PGD) Algorithm}\label{sec:alg_PGD}
In this section we study a zeroth-order proximal gradient descent (0-PGD) algorithm, as shown in Algorithm \ref{alg:prox}. The main algorithm framework is proximal gradient descent update (\ref{eq:h_constraint}) on the composite optimization problem $\min_{x\in\mathbb{R}^d}[F_{\delta}(x)+h(x)]$ that approximates the original problem (\ref{eq:obj}), where the zeroth-order stochastic gradient estimator $g_t\approx \nabla F_{\delta}(x_t)$ is obtained using either minibatch estimation (option G1) or variance-reduced estimation (option G2). 

We first present the convergence results of Algorithm \ref{alg:prox} with minibatch estimation as follows. 
\begin{thm}[Convergence of 0-PGD Algorithm with Minibatch Gradient Estimation]\label{thm:prox_batch}  
Implement Algorithm \ref{alg:prox} with Option G1, stepsize $\gamma=\frac{\delta}{cG\sqrt{d}}$ and constant batchsize $B_t\equiv B$. Then under Assumptions \ref{assum:f}-\ref{assum:h}, the output $x_{\widetilde{T}}$ has the following convergence rate. 
\begin{align}
\mathbb{E}[\|\mathcal{G}_{\gamma h}(x_{\widetilde{T}},\nabla F_{\delta}(x_{\widetilde{T}})\|]\le& \frac{\sqrt{2cG}d^{1/4}}{\sqrt{T\delta}}\sqrt{\mathbb{E}[\phi(x_0)]-\phi_{\min}+2\delta G}+\frac{16G\sqrt{d}}{\sqrt{B}}\label{eq:rate_G1X1}
\end{align}
where $\phi_{\min}\overset{\rm def}{=}\min_{x\in\mathbb{R}^d}[F(x)+h(x)]$. Furthermore, we can obtain a $(\gamma,\delta,\epsilon)$-PGSP by using hyperparameters $T=\mathcal{O}(Gd^{1/2}\delta^{-1}\epsilon^{-2})$, $B=\mathcal{O}(G^2d\epsilon^{-2})$ (see their full expressions in Eqs. (\ref{eq:T_G1X1}) and (\ref{eq:B_G1X1}) in Appendix \ref{sec:prox_batch}), which requires at most $2TB=\mathcal{O}(G^3d^{3/2}\delta^{-1}\epsilon^{-4})$ function evaluations and $T=\mathcal{O}(Gd^{1/2}\delta^{-1}\epsilon^{-2})$ proximal updates (\ref{eq:h_constraint}). 
\end{thm}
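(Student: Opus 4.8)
The plan is to run the standard inexact proximal gradient analysis on the smoothed composite objective $\psi(x):=F_\delta(x)+h(x)$, exploiting that by Lemma~\ref{lemma:Fdelta} the function $F_\delta$ is differentiable with $L_\delta:=cG\sqrt{d}/\delta$-Lipschitz gradient and that the prescribed stepsize is exactly $\gamma=1/L_\delta$. First I would record a one-step descent inequality. Writing $G_t:=\mathcal{G}_{\gamma h}(x_t,g_t)=(x_t-x_{t+1})/\gamma$ for the proximal gradient mapping evaluated at the noisy estimator $g_t$, strong convexity of $y\mapsto h(y)+\frac{1}{2\gamma}\|y-(x_t-\gamma g_t)\|^2$ gives $h(x_{t+1})\le h(x_t)-\langle g_t,x_{t+1}-x_t\rangle-\frac{1}{\gamma}\|x_{t+1}-x_t\|^2$, while the $L_\delta$-smooth descent lemma for $F_\delta$ gives $F_\delta(x_{t+1})\le F_\delta(x_t)+\langle\nabla F_\delta(x_t),x_{t+1}-x_t\rangle+\frac{L_\delta}{2}\|x_{t+1}-x_t\|^2$. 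Adding the two, using $\gamma=1/L_\delta$, and applying Young's inequality to the cross term $\langle\nabla F_\delta(x_t)-g_t,x_{t+1}-x_t\rangle$ yields $\psi(x_{t+1})\le\psi(x_t)-\frac{\gamma}{4}\|G_t\|^2+\gamma\|g_t-\nabla F_\delta(x_t)\|^2$.

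Second, I would control the estimation error. The estimator $\hat g_\delta(x;u,\xi)$ in (\ref{eq:ghat}) is unbiased for $\nabla F_\delta(x)$, and using Assumption~\ref{assum:f} (each $f_\xi$ is $L_\xi$-Lipschitz, $\mathbb{E}_\xi L_\xi^2\le G^2$) together with concentration of $u\sim\mathrm{Uniform}(\mathcal{S}_d(1))$, its second moment obeys the standard sphere-smoothing bound $\mathbb{E}_{u,\xi}\|\hat g_\delta(x;u,\xi)\|^2\le\bar c\,d\,G^2$ for an absolute constant $\bar c$; a minibatch of size $B$ then gives $\mathbb{E}\big[\|g_t-\nabla F_\delta(x_t)\|^2\mid x_t\big]\le\bar c\,d\,G^2/B$. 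Substituting into the descent inequality, telescoping over $t=0,\dots,T-1$, using $\psi(x_T)\ge\psi_{\min}\ge\phi_{\min}-\delta G$ and $\psi(x_0)\le\phi(x_0)+\delta G$ (both from part~(1) of Lemma~\ref{lemma:Fdelta}), and dividing by $\gamma T/4$, I get $\frac1T\sum_{t=0}^{T-1}\mathbb{E}\|G_t\|^2\le\frac{4}{\gamma T}\big(\mathbb{E}[\phi(x_0)]-\phi_{\min}+2\delta G\big)+\frac{4\bar c dG^2}{B}$.

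Third, I would pass from $G_t$ to the exact proximal gradient mapping $\bar G_t:=\mathcal{G}_{\gamma h}(x_t,\nabla F_\delta(x_t))$: non-expansiveness of $\mathrm{prox}_{\gamma h}$ gives $\|G_t-\bar G_t\|\le\|g_t-\nabla F_\delta(x_t)\|$, hence $\mathbb{E}\|\bar G_t\|^2\le 2\mathbb{E}\|G_t\|^2+2\bar c dG^2/B$. Assuming, as is standard, that $x_{\widetilde T}$ is drawn uniformly at random from $\{x_0,\dots,x_{T-1}\}$, the quantity $\mathbb{E}\|\bar G_{\widetilde T}\|^2$ equals the above average; substituting $\gamma=\delta/(cG\sqrt d)$, taking square roots, and using $\mathbb{E}\|\cdot\|\le\sqrt{\mathbb{E}\|\cdot\|^2}$ and $\sqrt{a+b}\le\sqrt a+\sqrt b$ produces a bound of the form (\ref{eq:rate_G1X1}), the displayed numerical constants following from tracking the absolute constants and optimizing the elementary inequalities. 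Finally, to force $\mathbb{E}\|\bar G_{\widetilde T}\|\le\epsilon$ — which by item~3 of Proposition~\ref{prop:PGSP} makes $x_{\widetilde T}$ a $(\gamma,\delta,\epsilon)$-PGSP — I would set each of the two terms to at most $\epsilon/2$: the variance term requires $B=\mathcal{O}(G^2d\epsilon^{-2})$, the optimization term requires $T=\mathcal{O}\big(G\sqrt d\,\delta^{-1}\epsilon^{-2}(\mathbb{E}[\phi(x_0)]-\phi_{\min}+2\delta G)\big)$, and counting two function evaluations per sampled $(u,\xi)$ gives $2TB=\mathcal{O}(G^3d^{3/2}\delta^{-1}\epsilon^{-4})$ evaluations and $T$ proximal updates.

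The main obstacle is the second-moment estimate $\mathbb{E}_{u,\xi}\|\hat g_\delta(x;u,\xi)\|^2\le\bar c\,d\,G^2$ with the correct \emph{linear} (not quadratic) dependence on $d$: the naive bound $\frac{d^2}{4\delta^2}\mathbb{E}_u[(2\delta L_\xi\|u\|)^2]=d^2L_\xi^2$ loses a factor of $d$, so one must instead use the spherical smoothing identity $\mathbb{E}_u[\langle v,u\rangle^2]=\|v\|^2/d$ combined with a Lipschitz-difference argument (or cite the corresponding estimate from the zeroth-order literature). A secondary point needing care is that $\gamma=1/L_\delta$ is the borderline stepsize, so the $-\frac{\gamma}{4}\|G_t\|^2$ descent term survives only thanks to the sharper $-\frac1\gamma\|x_{t+1}-x_t\|^2$ coming from strong convexity of the prox subproblem (rather than the usual $-\frac{1}{2\gamma}$); the remaining manipulations are routine bookkeeping of the $\delta G$ smoothing errors.
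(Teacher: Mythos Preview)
Your proposal is correct and follows essentially the same skeleton as the paper's proof: $L_\delta$-smooth descent for $F_\delta$, a prox inequality for $h$ (the paper quotes Lemma~\ref{lemma:prox_inprod} rather than your strong-convexity derivation, but they are equivalent), telescoping, the minibatch variance bound via the linear-in-$d$ second-moment estimate you flag (the paper simply cites Lemma~\ref{lemma:ghat_err} from \cite{lin2022gradient} for $\mathbb{E}\|\hat g_\delta\|^2\le 16\sqrt{2\pi}\,dG^2$), and non-expansiveness of the prox (Lemma~\ref{lemma:prox_contraction}) to pass from $G_t$ to $\bar G_t$. The only tactical difference is in the cross term: you apply Young's inequality directly to $\langle\nabla F_\delta(x_t)-g_t,x_{t+1}-x_t\rangle$ and obtain a $-\tfrac{\gamma}{4}\|G_t\|^2$ descent, whereas the paper first splits $\langle g_t-\nabla F_\delta(x_t),G_t\rangle=\langle g_t-\nabla F_\delta(x_t),\bar G_t\rangle+\langle g_t-\nabla F_\delta(x_t),G_t-\bar G_t\rangle$, kills the first piece in expectation by unbiasedness of $g_t$, and bounds the second by $\|g_t-\nabla F_\delta(x_t)\|^2$ via Lemma~\ref{lemma:prox_contraction}, yielding $-\tfrac{\gamma}{2}\|G_t\|^2$ and hence the exact constants displayed in (\ref{eq:rate_G1X1}); your route gives the same orders with slightly larger absolute constants.
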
   
\noindent\textbf{Comparison with Constrained Optimization:} The stochastic constrained optimization problem\\ 
$\min_{x\in\Omega}\{F(x)\overset{\rm def}{=}\mathbb{E}_{\xi}[F_{\xi}(x)]\}$ on a convex and compact set $\Omega$ is a special case of the composition optimization problem (\ref{eq:obj}) by using $h=h_{\Omega}$ defined in Eq. (\ref{eq:h_constraint}). \cite{liu2024zeroth} studies this constrained optimization with also nonconvex, nonsmooth and $G$-Lipschitz continuous $F$, proposes a stochastic projected gradient descent algorithm as a special case of our Algorithm \ref{alg:prox}, and obtains the function evaluation complexity result $\mathcal{O}(G^4Rd^{3/2}\delta^{-1}\epsilon^{-4})$ to achieve a $(\gamma,\delta,\epsilon)$-generalized Goldstein stationary point as a special case of our $(\gamma,\delta,\epsilon)$-PGSP (see Corollary 5.2 of \citep{liu2024zeroth}). This complexity result requires $\Omega$ to be bounded with radius $R$ and is higher than our $\mathcal{O}(G^3d^{3/2}\delta^{-1}\epsilon^{-4})$ that does not require $R$. Our improvement is obtained by replacing their bound $F_{\delta}(x_0)-F_{\delta}(x_T)\le G\|x_0-x_T\|\le 2GR$ with the tighter bound $F_{\delta}(x_0)+h(x_0)-F_{\delta}(x_T)-h(x_T)\le \phi(x_0)-\phi_{\min}+2\delta G$. 

Then using variance reduced gradient estimation, we obtain the following improved convergence rate and complexity results of Algorithms \ref{alg:prox} as follows. 
\begin{algorithm}[H]
\caption{Zeroth-order proximal gradient descent (0-PGD) algorithm} \label{alg:prox}
    \begin{algorithmic}[1]
    \STATE \textbf{Inputs:} Number of iterations $T$, stepsize $\gamma>0$, batchsizes $B_t$, radius $\delta>0$, period $q$ for variance reduction.
    \STATE \textbf{Initialize:} $x_0\in \mathbb{R}^d$.\\
    \FOR{iterations $t=0,1,\ldots,T-1$}
    {
        \STATE Obtain i.i.d. samples $\{u_{i,t}\}_{i=1}^{B_t}\sim{\rm Uniform}(\mathcal{S}_d(1))$ and $\{\xi_{i,t}\}_{i=1}^{B_t}\sim\mathcal{P}$.  
        \STATE Obtain stochastic gradient estimation $g_t\approx \nabla F_{\delta}(x_t)$ by either option below. 
        \STATE ~
        \STATE \textbf{Option G1: Minibatch Estimation.} 
        \begin{align}
        g_t=\frac{1}{B_t}\sum_{i=1}^{B_t}\hat{g}_{\delta}(x_t,u_{i,t},\xi_{i,t}),\label{eq:gt_avg}
        \end{align}
        where $\hat{g}_{\delta}$ is defined by Eq. (\ref{eq:ghat}). 
        \STATE ~
        \STATE \textbf{Option G2: Variance-reduced Estimation.} 
        \IF{$t\!\!\mod q=0$}
        \STATE Obtain $g_t$ by Eq. (\ref{eq:gt_avg}).
        \ELSE
        \STATE Obtain $g_t$ by the following variance-reduced estimation.
        \begin{align}
        g_t=g_{t-1}+\frac{1}{B_t}\sum_{i=1}^{B_t}\big[\hat{g}_{\delta}(x_t,u_{i,t},\xi_{i,t})-\hat{g}_{\delta}(x_{t-1},u_{i,t},\xi_{i,t})\big],\label{eq:gt_VR}
        \end{align}
        where $\hat{g}_{\delta}$ is defined by Eq. (\ref{eq:ghat}). 
        \ENDIF
        \STATE ~
        \STATE Update $x_t$ by proximal gradient descent as follows. 
        \begin{align}
            x_{t+1}={\rm prox}_{\gamma h}(x_t-\gamma g_t)\overset{\rm def}{=}{\arg\min}_{y\in\mathbb{R}^n} \Big[h(y)+\frac{1}{2\gamma}\|y-x_t+\gamma g_t\|^2\Big],\label{eq:prox_next}
        \end{align}
        where the proximal operator ${\rm prox}_{\gamma h}$ is defined by Eq. (\ref{eq:prox_map}). 
    }\ENDFOR
    \STATE {\bf Output:} $x_{\widetilde{T}}$ where $\widetilde{T}$ is uniformly obtained from $\{0,1,\ldots,T-1\}$ at random.
    \end{algorithmic}
\end{algorithm}
\begin{thm}[Convergence of 0-PGD Algorithm with Variance Reduction]\label{thm:prox_VR}  
Implement Algorithm \ref{alg:prox} with Option G2, stepsize $\gamma=\frac{\delta}{2G(d+c\sqrt{d})}$, batchsize $B_t=B_0$ for any $t\!\!\mod q=0$ and $B_t=B_1=q$ for other $t$. Then under Assumptions \ref{assum:f}-\ref{assum:h}, the output $x_{\widetilde{T}}$ has the following convergence rate. 
\begin{align}
\mathbb{E}[\|\mathcal{G}_{\gamma h}(x_{\widetilde{T}},\nabla F_{\delta}(x_{\widetilde{T}})\|]\le& \frac{\sqrt{2cG}d^{1/4}}{\sqrt{T\delta}}\sqrt{\mathbb{E}[\phi(x_0)]-\phi_{\min}+2\delta G}+\frac{16G\sqrt{d}}{\sqrt{B}} \label{eq:rate_G2X1}
\end{align}
Furthermore, we can obtain a $(\gamma,\delta,\epsilon)$-PGSP by using hyperparameters $B_0=1764dG^2\epsilon^{-2}$, $B_1=q=42\sqrt{d}G\epsilon^{-1}$, $T=\mathcal{O}(Gd\delta^{-1}\epsilon^{-2})$ (see their full expressions in Eq. (\ref{eq:T_G2X1}) in Appendix \ref{sec:prox_VR}), which requires at most $2B_0\lfloor T/q\rfloor+4B_1(T-\lfloor T/q\rfloor)=\mathcal{O}(G^2d^{3/2}\delta^{-1}\epsilon^{-3})$ function evaluations and $T=\mathcal{O}(Gd\delta^{-1}\epsilon^{-2})$ proximal updates (\ref{eq:h_constraint}). 
\end{thm}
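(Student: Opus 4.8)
The plan is to treat Algorithm \ref{alg:prox} under Option G2 as an \emph{inexact} proximal gradient method applied to the smoothed composite objective $\phi_\delta(x):=F_\delta(x)+h(x)$. By Lemma \ref{lemma:Fdelta}, $F_\delta$ is $G$-Lipschitz, differentiable with $(cG\sqrt d/\delta)$-Lipschitz gradient, and $\nabla F_\delta(x)\in\partial_\delta F(x)$; also $\sup_x|F_\delta(x)-F(x)|\le\delta G$, so $\phi_\delta(x_0)-\inf_x\phi_\delta(x)\le\phi(x_0)-\phi_{\min}+2\delta G$, which serves as the descent budget. Since Proposition \ref{prop:PGSP}(3) turns any $x$ with $\|\mathcal G_{\gamma h}(x,\nabla F_\delta(x))\|\le\epsilon$ into a $(\gamma,\delta,\epsilon)$-PGSP, it suffices to bound $\mathbb E[\|\mathcal G_{\gamma h}(x_{\widetilde T},\nabla F_\delta(x_{\widetilde T}))\|]$.

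First I would assemble two ingredients. (i) An inexact proximal descent inequality: writing $\mathcal G_t:=\mathcal G_{\gamma h}(x_t,g_t)$ so that $x_{t+1}=x_t-\gamma\mathcal G_t$, combining the prox optimality condition with $L$-smoothness of $F_\delta$ and Young's inequality gives, for $\gamma\le\tfrac1{2L}$, $\phi_\delta(x_{t+1})\le\phi_\delta(x_t)-\tfrac{\gamma}{4}\|\mathcal G_t\|^2+\tfrac{\gamma}{2}\|g_t-\nabla F_\delta(x_t)\|^2$. (ii) A SPIDER/SARAH-type variance recursion for the estimator (\ref{eq:gt_VR}): because $\hat g_\delta(\cdot;u,\xi)$ is unbiased for $\nabla F_\delta$, because $f_\xi$ is $L_\xi$-Lipschitz with $\mathbb E_\xi[L_\xi^2]\le G^2$ (so $\mathbb E\|\hat g_\delta(x;u,\xi)-\hat g_\delta(x';u,\xi)\|^2\le\tfrac{d^2G^2}{\delta^2}\|x-x'\|^2$), and because the second moment $\mathbb E\|\hat g_\delta(x;u,\xi)\|^2$ is $\mathcal O(dG^2)$, one obtains, conditionally on the past, $\mathbb E\|g_t-\nabla F_\delta(x_t)\|^2\le\|g_{t-1}-\nabla F_\delta(x_{t-1})\|^2+\tfrac{d^2G^2}{B_1\delta^2}\|x_t-x_{t-1}\|^2$, whereas at each epoch start (where the minibatch (\ref{eq:gt_avg}) is used) $\mathbb E\|g_t-\nabla F_\delta(x_t)\|^2\le\mathcal O(dG^2)/B_0$.

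The crux is combining these. Unrolling the recursion across one length-$q$ epoch and using $\|x_\tau-x_{\tau-1}\|=\gamma\|\mathcal G_{\tau-1}\|$ bounds the epoch sum of $\mathbb E\|g_t-\nabla F_\delta(x_t)\|^2$ by $q\,\mathcal O(dG^2)/B_0+\tfrac{q\,d^2G^2\gamma^2}{B_1\delta^2}\sum_{\tau\in\text{epoch}}\|\mathcal G_\tau\|^2$, and choosing $B_1=q$ cancels the leading $q$ in the second term. Summing the descent inequality over all $t$, substituting this, and isolating $\sum_t\mathbb E\|\mathcal G_t\|^2$ leaves the coefficient $\tfrac{\gamma}{4}-\tfrac{\gamma^3d^2G^2}{2\delta^2}$ on that sum, which is $\ge\tfrac{\gamma}{8}$ exactly when $\gamma\le\delta/(2dG)$ — this is why the stepsize is set to $\gamma=\tfrac{\delta}{2G(d+c\sqrt d)}$, which also satisfies $\gamma\le\tfrac1{2L}$. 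Telescoping $\phi_\delta$ against the descent budget then yields $\tfrac1T\sum_{t}\mathbb E\|\mathcal G_t\|^2=\mathcal O\!\big(\tfrac{dG(\phi(x_0)-\phi_{\min}+2\delta G)}{\delta T}\big)+\mathcal O(dG^2)/B_0$. Since $\mathrm{prox}_{\gamma h}$ is $1$-Lipschitz, $\|\mathcal G_{\gamma h}(x_t,\nabla F_\delta(x_t))\|\le\|\mathcal G_t\|+\|g_t-\nabla F_\delta(x_t)\|$; taking expectations, applying Jensen's inequality and $\mathbb E\|g_t-\nabla F_\delta(x_t)\|^2\le\mathcal O(dG^2)/B_0$, and averaging over the uniform index $\widetilde T$ gives the rate (\ref{eq:rate_G2X1}) with $B=B_0$. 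Forcing each term below $\epsilon/2$ yields $B_0=\mathcal O(dG^2\epsilon^{-2})$ and $T=\mathcal O(Gd\delta^{-1}\epsilon^{-2})$, while $q=B_1\asymp\sqrt{B_0}=\mathcal O(\sqrt dG\epsilon^{-1})$ balances the two parts of the evaluation cost; by Proposition \ref{prop:PGSP}(3), $x_{\widetilde T}$ is then a $(\gamma,\delta,\epsilon)$-PGSP. Counting $2$ function evaluations per sample in the minibatch step and $4$ per sample in the variance-reduced step gives $2B_0\lfloor T/q\rfloor+4B_1(T-\lfloor T/q\rfloor)=\mathcal O(G^2d^{3/2}\delta^{-1}\epsilon^{-3})$ evaluations and $T$ proximal updates.

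I expect the main obstacle to be the bookkeeping of the combined step: unrolling the coupled descent/variance recursion so that, with $B_1=q$ and the stated $\gamma$, the variance contribution is \emph{strictly} dominated by the $-\tfrac{\gamma}{4}\|\mathcal G_t\|^2$ term, and tracking the $d$- (rather than $\sqrt d$-) scaling of the admissible stepsize forced by the per-sample bound $\tfrac{dG}{\delta}\|x-x'\|$, which is precisely what inflates the iteration count to $T=\mathcal O(Gd\delta^{-1}\epsilon^{-2})$ relative to the minibatch Theorem \ref{thm:prox_batch}. A minor point to get right is that the recursion in ingredient (ii) requires the \emph{same} randomness $(u_{i,t},\xi_{i,t})$ to be reused at $x_t$ and $x_{t-1}$, which Algorithm \ref{alg:prox} indeed does.
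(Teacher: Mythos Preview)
Your proposal is correct and follows essentially the same route as the paper: an inexact proximal descent lemma on $\phi_\delta$ combined with the SARAH/SPIDER variance recursion (the paper's Lemma \ref{lemma:gtVR_err}), summed over epochs with $B_1=q$ and the stepsize $\gamma=\delta/(2G(d+c\sqrt d))$ chosen so the variance term is absorbed, then Lemma \ref{lemma:prox_contraction} to pass from $\mathcal G_{\gamma h}(x_t,g_t)$ to $\mathcal G_{\gamma h}(x_t,\nabla F_\delta(x_t))$, and Proposition \ref{prop:PGSP}(3) to conclude. One small imprecision: in your final step you invoke $\mathbb E\|g_t-\nabla F_\delta(x_t)\|^2\le\mathcal O(dG^2)/B_0$ as if it held for every $t$, but this is only true at epoch starts; for general $t$ you must reuse the full recursion (ii) and bound the residual $\sum_j\|x_j-x_{j-1}\|^2$ term via the already-established control on $\sum_t\|\mathcal G_t\|^2$, exactly as the paper does---this is routine with the ingredients you already have.
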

\noindent\textbf{Comparison with Existing Results:} For stochastic nonconvex nonsmooth constrained optimization\\ 
$\min_{x\in\Omega}\{F(x)\overset{\rm def}{=}\mathbb{E}_{\xi}[F_{\xi}(x)]\}$ as a special case, \cite{liu2024zeroth} obtains function evaluation complexity $\mathcal{O}(G^3Rd^{3/2}\delta^{-1}\epsilon^{-3})$, higher than our $\mathcal{O}(G^2d^{3/2}\delta^{-1}\epsilon^{-3})$ (see their Corollary 5.4). For unconstrained optimization $\min_{x\in\mathbb{R}^d}F(x)$, a smaller special case, \cite{chen2023faster} uses variance reduction to achieve a $(\delta,\epsilon)$-Goldstein stationary point using variance reduction with also $\mathcal{O}(G^2d^{3/2}\delta^{-1}\epsilon^{-3})$ function evaluations that matches our complexity result (see their Theorem 1). 
\section{Zeroth-Order Generalized Conditional Gradient (0-GCG) Algorithm}\label{sec:alg_GCG}
\begin{wrapfigure}{R}{0.58\textwidth}
\begin{minipage}{0.58\textwidth}
\vspace{-0.09\textwidth}
\begin{algorithm}[H]
\caption{Zeroth-order generalized conditional gradient algorithm (0-GCG)}\label{alg:GCG}
    \begin{algorithmic}[1]
    \STATE \textbf{Inputs:} Number of iterations $T$, stepsize $\gamma>0$, batchsizes $B_t$, radius $\delta>0$, period $q$ for variance reduction.
    \STATE \textbf{Initialize:} $x_0\in \mathbb{R}^d$.\\
    \FOR{iterations $t=0,1,\ldots,T-1$}
    {
        \STATE Obtain i.i.d. samples $\{u_{i,t}\}_{i=1}^{B_t}\sim{\rm Uniform}(\mathcal{S}_d(1))$ and $\{\xi_{i,t}\}_{i=1}^{B_t}\sim\mathcal{P}$.  
        \STATE Obtain stochastic gradient estimation $g_t\approx \nabla F_{\delta}(x_t)$ by option G1 or G2 in Algorithm \ref{alg:prox}. 
        \STATE Update $x_t$ using LMO as follows. 
        \begin{align}
            &y_t\!\in\!\mathcal{L}_h(g_t)\!\overset{\rm def}{=}\!{\arg\min}_{y\in\mathbb{R}^d} [h(y)+\langle g_t,y\rangle],\label{eq:GCG_map}\\
            &x_{t+1}=x_t+\gamma (y_t-x_t).\label{eq:GCG_next}
        \end{align}
    }\ENDFOR
    \STATE {\bf Output:} $x_{\widetilde{T}}$ where $\widetilde{T}$ is uniformly obtained from $\{0,1,\ldots,T-1\}$ at random.
    \end{algorithmic}
\end{algorithm}
\end{minipage}
\vspace{-0.03\textwidth}
\end{wrapfigure}
In this section, we consider the case where the proximal operator (\ref{eq:prox_map}) is costly. For example, when $h(x)$ is a nuclear norm of regularizer, the proximal operator requires full singular value decomposition \citep{wang2021scalable}. The popular generalized conditional gradient method \citep{bredies2005equivalence,jiang2014iteration,rakotomamonjy2015generalized,bach2015duality,nesterov2018complexity,ghadimi2019conditional,guo2022unified,ito2023parameter} uses a cheaper linear minimization oracle (LMO, defined by Eq. (\ref{eq:LMO})) to replace the proximal operator. We propose a zeroth-order generalized conditional gradient method, using also two options of the zeroth-order gradient estimations, minibatch estimation (option G1) and variance-reduced estimation (option G2), as shown in Algorithm \ref{alg:GCG}.

We present the convergence rate and complexity results of Algorithm \ref{alg:GCG} in the following two theorems, for the two gradient estimation options respectively. 
\begin{thm}[Convergence of 0-GCG Algorithm with Minibatch Gradient Estimation]\label{thm:GCG_batch}  
Implement Algorithm \ref{alg:GCG} with Option G1, stepsize $\gamma=\frac{1}{R}\sqrt{\frac{2\delta}{TcG\sqrt{d}}\mathbb{E}[\phi(x_0)-\phi_{\min}+2\delta G]}$, constant batchsize $B_t\equiv B$ and initial point $x_0$ satisfying $\|x_0-x^{(h)}\|\le R$. Then under Assumptions \ref{assum:f}-\ref{assum:h2}, the output $x_{\widetilde{T}}$ has the following convergence rate. 
\begin{align}
\mathbb{E}\big[\mathcal{W}_h[x_{\widetilde{T}},\nabla F_{\delta}(x_{\widetilde{T}})]\big]\le R\sqrt{\frac{8cG\sqrt{d}}{T\delta}\mathbb{E}[\phi(x_0)-\phi_{\min}+2\delta G]}+\frac{21RG\sqrt{d}}{\sqrt{B}}.\label{eq:rate_G1X2}
\end{align}
Furthermore, we can obtain a $(\delta,\epsilon)$-CGGSP by using hyperparameters $T=\mathcal{O}(GR^2d^{1/2}\delta^{-1}\epsilon^{-2})$ (see its full expression in Eq. (\ref{eq:T_G1X2}) in Appendix \ref{sec:GCG_batch}), $B=1764G^2dR^2\epsilon^{-2}$, which requires at most $2TB\!=\!\mathcal{O}(G^3R^4d^{3/2}\delta^{-1}\!\epsilon^{-4})$ function evaluations and $T\!=\!\mathcal{O}(GR^2d^{1/2}\delta^{-1}\!\epsilon^{-2})$ LMO updates (\ref{eq:GCG_map}). 
\end{thm}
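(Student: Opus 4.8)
The plan is to read Algorithm \ref{alg:GCG} with Option G1 as a stochastic generalized conditional gradient (Frank--Wolfe) method applied to the \emph{smoothed} composite objective $\phi_{\delta}:=F_{\delta}+h$, and to carry the zeroth-order estimation error and the smoothing bias through the classical Frank--Wolfe descent argument. By Lemma \ref{lemma:Fdelta}, $F_{\delta}$ is differentiable with $L$-Lipschitz gradient, $L:=cG\sqrt d/\delta$, and $\|\nabla F_{\delta}\|\le G$ everywhere. I would first verify that all iterates remain in $\mathcal{B}_d(x^{(h)},R)$: $x_0$ lies there by hypothesis, the prescribed $\gamma$ lies in $(0,1]$ once $T$ is large enough, each $y_t\in\mathcal{L}_h(g_t)$, and $x_{t+1}=(1-\gamma)x_t+\gamma y_t$ is a convex combination; so if $y_t\in\mathcal{B}_d(x^{(h)},R)$ then $x_{t+1}$ stays there too and $\|y_t-x_t\|\le 2R$. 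Containment of the LMO output is Proposition \ref{prop:LMO_inR}, whose hypothesis $\|g_t\|\le G$ is the delicate point discussed at the end.

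For a single iteration I would combine the descent lemma for $F_{\delta}$ with $x_{t+1}-x_t=\gamma(y_t-x_t)$ and the convexity of $h$ (so $h(x_{t+1})\le(1-\gamma)h(x_t)+\gamma h(y_t)$) to get $\phi_{\delta}(x_{t+1})\le\phi_{\delta}(x_t)+\gamma[\langle\nabla F_{\delta}(x_t),y_t-x_t\rangle+h(y_t)-h(x_t)]+\tfrac{L\gamma^2}{2}\|y_t-x_t\|^2$. Writing $e_t:=g_t-\nabla F_{\delta}(x_t)$ and using that $y_t$ minimizes $h(y)+\langle g_t,y\rangle$ (Eq. (\ref{eq:GCG_map})), the bracketed term equals $-\mathcal{W}_h(x_t,g_t)-\langle e_t,y_t-x_t\rangle$. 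To replace $\mathcal{W}_h(x_t,g_t)$ by the target $\mathcal{W}_h(x_t,\nabla F_{\delta}(x_t))$, I would take $\bar y_t\in\mathcal{L}_h(\nabla F_{\delta}(x_t))\subset\mathcal{B}_d(x^{(h)},R)$ (legitimate since $\|\nabla F_{\delta}(x_t)\|\le G$), use $\bar y_t$ as a competitor in the maximum defining $\mathcal{W}_h(x_t,g_t)$, and apply Cauchy--Schwarz with $\|\bar y_t-x_t\|\le 2R$ to obtain $\mathcal{W}_h(x_t,\nabla F_{\delta}(x_t))\le\mathcal{W}_h(x_t,g_t)+2R\|e_t\|$. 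Together with $|\langle e_t,y_t-x_t\rangle|\le 2R\|e_t\|$ this yields the recursion $\gamma\,\mathcal{W}_h(x_t,\nabla F_{\delta}(x_t))\le\phi_{\delta}(x_t)-\phi_{\delta}(x_{t+1})+4R\gamma\|e_t\|+2LR^2\gamma^2$.

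Next I would sum over $t=0,\dots,T-1$, telescope, and bound $\phi_{\delta}(x_0)-\phi_{\delta}(x_T)\le\phi(x_0)-\phi_{\min}+2\delta G$ using $|F_{\delta}-F|\le\delta G$ (Lemma \ref{lemma:Fdelta}) and $\phi(x_T)\ge\phi_{\min}$; dividing by $\gamma T$ and taking expectations, with $\widetilde T$ uniform on $\{0,\dots,T-1\}$, gives $\mathbb{E}[\mathcal{W}_h(x_{\widetilde T},\nabla F_{\delta}(x_{\widetilde T}))]\le\frac{\mathbb{E}[\phi(x_0)-\phi_{\min}+2\delta G]}{\gamma T}+2LR^2\gamma+\frac{4R}{T}\sum_t\mathbb{E}\|e_t\|$. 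Conditioned on the history $g_t$ is the average of $B$ i.i.d.\ unbiased two-point estimators (\ref{eq:ghat}), so $\mathbb{E}\|e_t\|\le\sqrt{\tfrac1B\mathbb{E}\|\hat g_{\delta}\|^2}=\mathcal{O}(G\sqrt d/\sqrt B)$ by the standard second-moment bound $\mathbb{E}\|\hat g_{\delta}\|^2=\mathcal{O}(dG^2)$ for sphere smoothing. Choosing $\gamma$ by AM--GM to balance the first two terms then reproduces the prescribed stepsize and the rate (\ref{eq:rate_G1X2}). Finally, forcing each of the two summands in (\ref{eq:rate_G1X2}) below $\epsilon/2$ gives $T=\mathcal{O}(GR^2d^{1/2}\delta^{-1}\epsilon^{-2})$ and $B=\mathcal{O}(G^2R^2d\epsilon^{-2})$, hence $2TB=\mathcal{O}(G^3R^4d^{3/2}\delta^{-1}\epsilon^{-4})$ function evaluations and $T$ LMO calls; and since $\nabla F_{\delta}(x)\in\partial_{\delta}F(x)$ (Lemma \ref{lemma:Fdelta}), the bound $\mathbb{E}[\mathcal{W}_h(x_{\widetilde T},\nabla F_{\delta}(x_{\widetilde T}))]\le\epsilon$ forces $\mathbb{E}[\min_{g\in\partial_{\delta}F(x_{\widetilde T})}\mathcal{W}_h(x_{\widetilde T},g)]\le\epsilon$, i.e.\ $x_{\widetilde T}$ is a $(\delta,\epsilon)$-CGGSP in expectation (cf.\ Proposition \ref{prop:CGGSP}).

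The step I expect to be most delicate is the interaction between the inexact gradient and the Frank--Wolfe gap, together with keeping the trajectory confined to $\mathcal{B}_d(x^{(h)},R)$. Because the LMO is queried at $g_t$ rather than $\nabla F_{\delta}(x_t)$, one simultaneously needs the transfer from $\mathcal{W}_h(x_t,g_t)$ to $\mathcal{W}_h(x_t,\nabla F_{\delta}(x_t))$ (which crucially exploits compactness of $\mathcal{B}_d(x^{(h)},R)$, hence Assumptions \ref{assum:h}--\ref{assum:h2} and Proposition \ref{prop:LMO_inR}) and a guarantee that $\mathcal{L}_h(g_t)\subset\mathcal{B}_d(x^{(h)},R)$ even though $g_t$ may have norm exceeding $G$ under minibatch estimation; reconciling this with the hypothesis $\|g\|\le G$ of Proposition \ref{prop:LMO_inR} (via the growth/boundedness structure of $h$, or a mild truncation of $g_t$) is the real content. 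Everything else — the descent lemma, the telescoping, the variance bound, and the AM--GM tuning of $\gamma$ — is routine, though tracking the absolute constants so as to land precisely on the coefficients $8c$ and $21$ in (\ref{eq:rate_G1X2}) requires careful bookkeeping.
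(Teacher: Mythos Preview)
Your proposal is correct and follows essentially the same route as the paper: descent lemma for $F_\delta$, convexity of $h$ along $x_{t+1}=(1-\gamma)x_t+\gamma y_t$, LMO optimality of $y_t$, transfer of the gap to $\nabla F_\delta(x_t)$ via an auxiliary $\bar y_t\in\mathcal{L}_h(\nabla F_\delta(x_t))$, telescoping, the minibatch variance bound (Lemma \ref{lemma:gtavg_err}), and AM--GM tuning of $\gamma$. The one bookkeeping difference is that the paper inserts $\widetilde y_t:=\bar y_t$ directly into $\langle\nabla F_\delta(x_t),y_t-x_t\rangle$ and uses $\langle g_t,y_t-\widetilde y_t\rangle\le h(\widetilde y_t)-h(y_t)$ in a single step, picking up only one $2R\|e_t\|$ error term instead of your two (one from $\langle e_t,y_t-x_t\rangle$ and one from the gap transfer), which is what lets the constant $21$ in (\ref{eq:rate_G1X2}) go through; your flagged concern about whether $\|g_t\|\le G$ in Proposition \ref{prop:LMO_inR} is apt, since the paper's Lemma \ref{lemma:GCG_inR} simply invokes that proposition without verifying the hypothesis for the stochastic $g_t$.
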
 
\begin{thm}[Convergence of 0-GCG Algorithm with Variance Reduction]\label{thm:GCG_VR}  
Implement Algorithm \ref{alg:GCG} with Option G2, stepsize $\gamma=\frac{1}{R}\sqrt{\frac{\delta\mathbb{E}[\phi(x_0)-\phi_{\min}+2\delta G]}{TG(4d+2c\sqrt{d})}}$, batchsize $B_t=B_0$ for any $t\!\!\mod q=0$ and $B_t=B_1=q$ for other $t$. The initial point $x_0$ satisfies $\|x_0-x^{(h)}\|\le R$. Then under Assumptions \ref{assum:f}-\ref{assum:h2}, the output $x_{\widetilde{T}}$ has the following convergence rate. 
\begin{align}
\mathbb{E}[\|\mathcal{W}_h(x_{\widetilde{T}},\nabla F_{\delta}(x_{\widetilde{T}})\|]\le 2R\sqrt{\frac{G(4d+2c\sqrt{d})}{T\delta}\mathbb{E}[\phi(x_0)-\phi_{\min}+2\delta G]}+\frac{13RG\sqrt{d}}{\sqrt{B_0}} \label{eq:rate_G2X2}
\end{align}
Furthermore, we can obtain a $(\delta,\epsilon)$-CGGSP by using hyperparameters $B_0=676dR^2G^2\epsilon^{-2}$, $B_1=q=26RG\epsilon^{-1}\sqrt{d}$, $T=\mathcal{O}(GR^2d\delta^{-1}\epsilon^{-2})$ (see its full expression in Eq. (\ref{eq:T_G2X2}) in Appendix \ref{sec:GCG_VR}), which requires at most $2B_0\lfloor T/q\rfloor+4B_1(T-\lfloor T/q\rfloor)=\mathcal{O}(G^2R^3d^{3/2}\delta^{-1}\epsilon^{-3})$ function evaluations and $T=\mathcal{O}(GR^2d\delta^{-1}\epsilon^{-2})$ LMO updates (\ref{eq:GCG_map}).  
\end{thm}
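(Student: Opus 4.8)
The plan is to mirror the analysis of Theorem \ref{thm:prox_VR} but replace the proximal descent lemma with a conditional-gradient descent lemma on the smoothed objective $\phi_{\delta}(x) := F_{\delta}(x) + h(x)$. First I would record the one-step descent inequality. Since $\nabla F_{\delta}$ is $L_{\delta}$-Lipschitz with $L_{\delta} = cG\sqrt{d}/\delta$ by Lemma \ref{lemma:Fdelta}(2), applying the descent lemma along the update $x_{t+1} = x_t + \gamma(y_t - x_t)$ with $y_t \in \mathcal{L}_h(g_t)$ gives
\begin{align}
\phi_{\delta}(x_{t+1}) \le \phi_{\delta}(x_t) - \gamma \mathcal{W}_h(x_t, g_t) + \frac{L_{\delta}\gamma^2}{2}\|y_t - x_t\|^2 + \gamma\langle g_t - \nabla F_{\delta}(x_t), y_t - x_t\rangle, \nonumber
\end{align}
using convexity of $h$ and the definition of $\mathcal{W}_h$. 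Since $x_0 \in \mathcal{B}_d(x^{(h)}, R)$ and each $y_t \in \mathcal{B}_d(x^{(h)}, R)$ by Proposition \ref{prop:LMO_inR}, the iterates stay in $\mathcal{B}_d(x^{(h)}, R)$, so $\|y_t - x_t\| \le 2R$. Then I would bound $\mathcal{W}_h(x_t, \nabla F_{\delta}(x_t)) \le \mathcal{W}_h(x_t, g_t) + 2R\|g_t - \nabla F_{\delta}(x_t)\|$ (the Frank–Wolfe gap is $2R$-Lipschitz in the gradient argument on this ball).

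Second, I would control the variance-reduced estimator. This is where the bulk of the work lies and where I borrow directly from the 0-PGD / \citep{chen2023faster} machinery: for the variance-reduced scheme of Eq. (\ref{eq:gt_VR}), one shows $\mathbb{E}\|g_t - \nabla F_{\delta}(x_t)\|^2 \le \frac{C_1 d G^2}{B_0}$ on epoch-start iterations plus a telescoping term $\sum \frac{C_2 d G^2}{B_1 \delta^2}\|x_{t} - x_{t-1}\|^2$ accumulated within an epoch of length $q$; since $\|x_{t+1} - x_t\| = \gamma\|y_t - x_t\| \le 2R\gamma$, the movement term is $O(d G^2 R^2 \gamma^2 / (B_1 \delta^2))$ per step, and with $B_1 = q$ the per-epoch accumulation is absorbed. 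The key consequence, after summing $t = 0,\dots,T-1$, telescoping $\phi_{\delta}(x_0) - \phi_{\delta}(x_T)$, using $\phi_{\delta}(x_0) \le \phi(x_0) + \delta G$ and $\phi_{\delta}(x_T) \ge \phi_{\min} - \delta G$ from Lemma \ref{lemma:Fdelta}(1), and dividing by $\gamma T$, is a bound of the form
\begin{align}
\frac{1}{T}\sum_{t=0}^{T-1}\mathbb{E}[\mathcal{W}_h(x_t, \nabla F_{\delta}(x_t))] \le \frac{\mathbb{E}[\phi(x_0) - \phi_{\min} + 2\delta G]}{\gamma T} + \frac{L_{\delta}\gamma (2R)^2}{2} + (\text{variance terms}). \nonumber
\end{align}
Choosing $\gamma$ to balance the first two terms — which is exactly the stated $\gamma = \frac{1}{R}\sqrt{\frac{\delta \mathbb{E}[\phi(x_0)-\phi_{\min}+2\delta G]}{TG(4d + 2c\sqrt{d})}}$, where the $4d + 2c\sqrt{d}$ packages both $L_{\delta}$ and the variance constant — yields the $R\sqrt{G(4d+2c\sqrt{d})/(T\delta) \cdot \mathbb{E}[\cdots]}$ leading term, and the residual variance term gives the $RG\sqrt{d}/\sqrt{B_0}$ additive error in Eq. (\ref{eq:rate_G2X2}). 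Since $x_{\widetilde T}$ is drawn uniformly from $\{0,\dots,T-1\}$, the left side equals $\mathbb{E}[\mathcal{W}_h(x_{\widetilde T}, \nabla F_{\delta}(x_{\widetilde T}))]$.

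Finally, to get the complexity claim I would set each of the two terms in Eq. (\ref{eq:rate_G2X2}) to $\epsilon/2$: the second term forces $B_0 = 676 dR^2G^2\epsilon^{-2}$, and the first forces $T = \Theta(GR^2 d \delta^{-1}\epsilon^{-2} \cdot \mathbb{E}[\phi(x_0)-\phi_{\min}+2\delta G])$, with $B_1 = q = 26RG\epsilon^{-1}\sqrt{d}$ chosen (as in Theorem \ref{thm:prox_VR}) so the epoch length matches $B_1$ and the within-epoch variance is controlled. Counting function evaluations — $2B_0$ per epoch-start iteration (two-point estimator, $B_0$ samples) and $4B_1$ per interior iteration — gives $2B_0\lfloor T/q\rfloor + 4B_1(T - \lfloor T/q\rfloor) = O(G^2R^3 d^{3/2}\delta^{-1}\epsilon^{-3})$, and then invoking Proposition \ref{prop:CGGSP}(3), $\mathcal{W}_h[x_{\widetilde T}, \nabla F_{\delta}(x_{\widetilde T})] \le \epsilon$ certifies $x_{\widetilde T}$ as a $(\delta,\epsilon)$-CGGSP. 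The main obstacle is the variance-reduction bookkeeping in the second step: one must carefully track how the $1/\delta^2$ blow-up in the gradient-estimator Lipschitz constant interacts with the step movement $2R\gamma$ and the epoch length $q$, ensuring the accumulated variance stays at the same order as the epoch-start variance $dG^2/B_0$ rather than degrading the rate; everything else is a routine adaptation of the 0-PGD argument with the proximal gap $\|\mathcal{G}_{\gamma h}\|$ replaced by the Frank–Wolfe gap $\mathcal{W}_h$ and the diameter bound $2R$ inserted wherever $\|y_t - x_t\|$ appears.
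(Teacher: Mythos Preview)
Your proposal is correct and follows essentially the same route as the paper: a descent inequality for $\phi_{\delta}$ along the GCG step, the bound $\|x_{t+1}-x_t\|\le 2R\gamma$ from Lemma~\ref{lemma:GCG_inR}, the variance-reduction estimate of Lemma~\ref{lemma:gtVR_err} with $B_1=q$ to collapse the within-epoch sum, and balancing $\gamma$ to obtain Eq.~(\ref{eq:rate_G2X2}). The only organizational difference is that the paper, in deriving Eq.~(\ref{eq:inprod_le_X2}), introduces the auxiliary point $\widetilde{y}_t\in\mathcal{L}_h[\nabla F_{\delta}(x_t)]$ and bounds $\langle\nabla F_{\delta}(x_t),y_t-x_t\rangle$ directly in terms of $\mathcal{W}_h[x_t,\nabla F_{\delta}(x_t)]$, picking up a single $2R\|g_t-\nabla F_{\delta}(x_t)\|$ term; your route (descent with $\mathcal{W}_h(x_t,g_t)$, then the $2R$-Lipschitz conversion to $\mathcal{W}_h(x_t,\nabla F_{\delta}(x_t))$, plus the separate inner-product error) produces $4R\|g_t-\nabla F_{\delta}(x_t)\|$, so your constants would be roughly doubled relative to the stated $13$ and $4d+2c\sqrt{d}$, but the argument and rates are otherwise identical.
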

\noindent\textbf{Comparison with Constrained Optimization:} With minibatch gradient estimation, our function evaluation complexity $\mathcal{O}(G^3R^4d^{3/2}\delta^{-1}\epsilon^{-4})$ in Theorem \ref{thm:GCG_batch} is more efficient than the complexity $\mathcal{O}(G^4R^5d^{3/2}\delta^{-1}\epsilon^{-4})$ to achieve $(\delta,\epsilon)$-Goldstein Frank–Wolfe stationary point of the stochastic nonconvex nonsmooth constrained optimization $\min_{x\in\Omega}\{F(x)\overset{\rm def}{=}\mathbb{E}_{\xi}[F_{\xi}(x)]\}$, a special case of our $(\delta,\epsilon)$-CGGSP of the composite optimization problem (\ref{eq:obj}) (Corollary 5.7 of \citep{liu2024zeroth}). 
Using variance reduction, our complexity improves to $\mathcal{O}(G^2R^3d^{3/2}\delta^{-1}\epsilon^{-3})$, which is also lower than $\mathcal{O}(G^3R^4d^{3/2}\delta^{-1}\epsilon^{-3})$ for $\min_{x\in\Omega}\{F(x)\overset{\rm def}{=}\mathbb{E}_{\xi}[F_{\xi}(x)]\}$ (Corollary 5.9 of \citep{liu2024zeroth}). 

\section{Experiments}\label{sec:experiment}
We apply our zeroth-order algorithms to train a two-layer ReLu network 
$r_{\xi}(x)=W_2\sigma(W_1\xi+b_1)+b_2$. Here, $\xi\in\mathbb{R}^{d_{\xi}}$ is an input sample. The network parameters include the weight matrices ($W_1\in\mathbb{R}^{d_1\times d_{\xi}}$ and $W_2\in\mathbb{R}^{d_2\times d_1}$) and bias vectors ($b_1\in\mathbb{R}^{d_1}$ and $b_2\in\mathbb{R}^{d_2}$). $x\in\mathbb{R}^d$ denotes the total parameter which is concatenated by $b_1$, $b_2$ and flattened $W_1$, $W_2$, so the total dimensionality is $d=d_1d_{\xi}+d_1d_2+d_1+d_2$. $\sigma:\mathbb{R}^{d_1}\to\mathbb{R}^{d_1}$ is the widely used ReLu activation function which maps each entry $u$ to $\max(u,0)$. 

We select $d_{\xi}=5$, $d_1=4$ and $d_2=2$ which imply $d=34$, and generate the underlying sparse parameters $x^*\in\mathbb{R}^d$ by randomly selecting half of the entries to be 0 and generating the other half from standard Gaussian. Then we construct the binary classification dataset $\{(\xi_i,y_i)\}_{i=1}^{N}$ with sample size $N=1000$, where the inputs $\xi_i\in\mathbb{R}^5$ are i.i.d. standard Gaussian, and the label $y_i=0$ if the first entry of $f_{\xi_i}(x^*)\in\mathbb{R}^2$ is larger, otherwise $y_i=1$. Then we train the regularized ReLu network via the following composite optimization problem. 
\begin{align}  
\min_{x\in\mathbb{R}^d} \phi(x)=\frac{1}{N}\sum_{i=1}^N \ell[r_{\xi_i}(x),y_i]+\lambda_1\|x\|_1+\frac{\lambda_2}{2}\|x\|_2^2,\label{eq:obj_relu}
\end{align}
This can be seen as an instance of the problem (\ref{eq:obj}), where the main part $F(x)=\frac{1}{N}\sum_{i=1}^N \ell[r_{\xi_i}(x),y_i]$ denotes the average cross-entropy loss between the prediction $r_{\xi_i}(x)$ and the true label $y_i$, and is nonsmooth due to the ReLu activation $\sigma$. In the convex regularizer $h(x)=\lambda_1\|x\|_1+\frac{\lambda_2}{2}\|x\|_2^2$, we select $\lambda_1\!=\!\lambda_2\!=\!0.01$, $\|x\|_1$ induces sparse parameters and $\|x\|_2$ controls the parameter magnitude. 

We implement our Algorithms \ref{alg:prox} and \ref{alg:GCG}, and for each algorithm we test both gradient estimation options, G1 (minibatch) and G2 (variance reduction), all with radius $\delta=0.001$. For both algorithms with option G1 we select batchsize 500 and run 100 iterations. For both algorithms with option G2 we run 523 iterations, start each epoch of 10 iterations with batchsize 500, and use batchsize 50 for the rest iterations. We use fine-tuned stepsizes 0.005 for 0-PGD with G1, 0.001 for 0-PGD with G2, $5\times 10^{-5}$ for 0-GCG with G1, and $10^{-5}$ for 0-GCG with G2. The experiment is conducted on Python 3.9 using Apple M1 Pro with 8 cores and 16 GB memory, which costs about half a minute.

At each iteration $t$, we evaluate the training objective function $\phi(x_t)$ (Eq. (\ref{eq:obj_relu})) as well as the classification accuracies on both the 1000 training samples and the 1000 heldout test samples generated in the same way as that of the training samples. In Figure \ref{experiment_result}, we plot these metrics VS the function evaluation complexity (the total number of function evaluations up to each iteration), which shows that all the algorithms converge well with over 90\% accuracy on both training and test samples. In particular, compared with minibatch gradient estimation (option G1), after improving gradient estimation with variance reduction (option G2), both algorithms 0-PGD and 0-GCG converge faster.  
\vspace{-6pt}
\begin{figure*}[h]
\begin{minipage}{.33\textwidth}
    \centering
    \includegraphics[width=\textwidth]{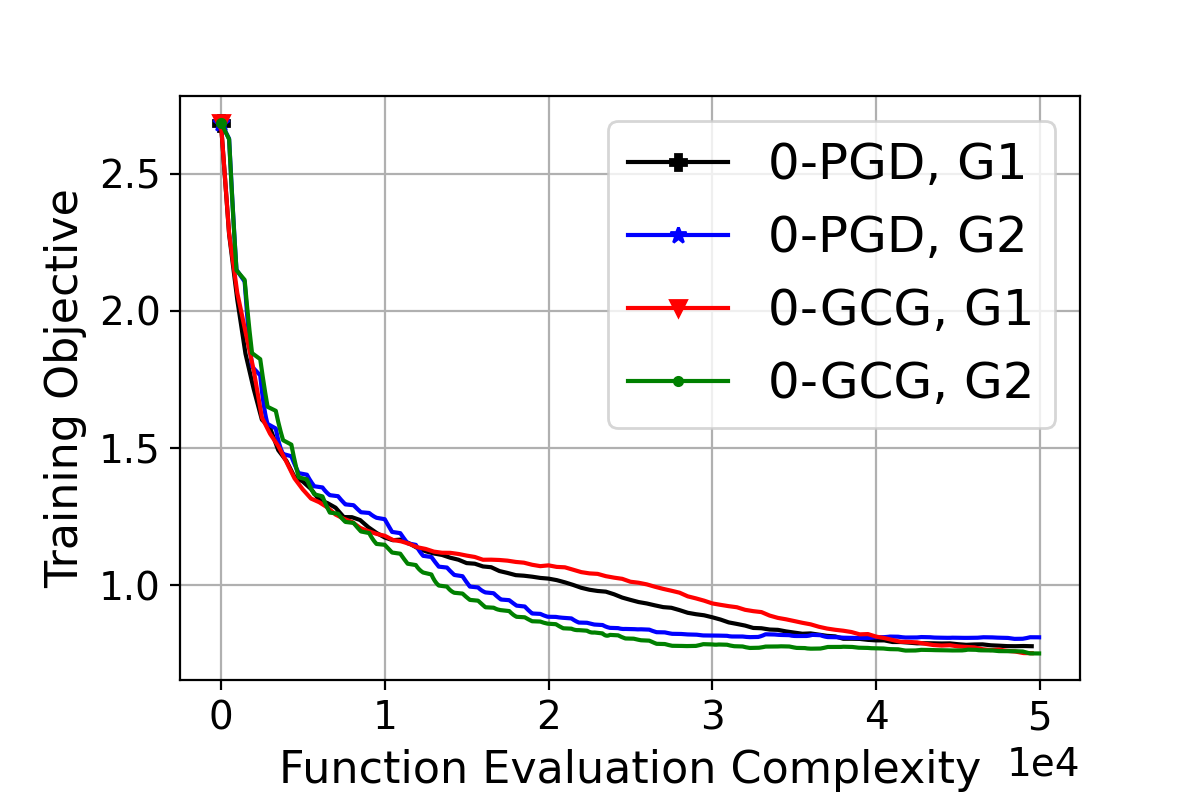}
\end{minipage} 
\begin{minipage}{.33\textwidth}
    \centering
    \includegraphics[width=\textwidth]{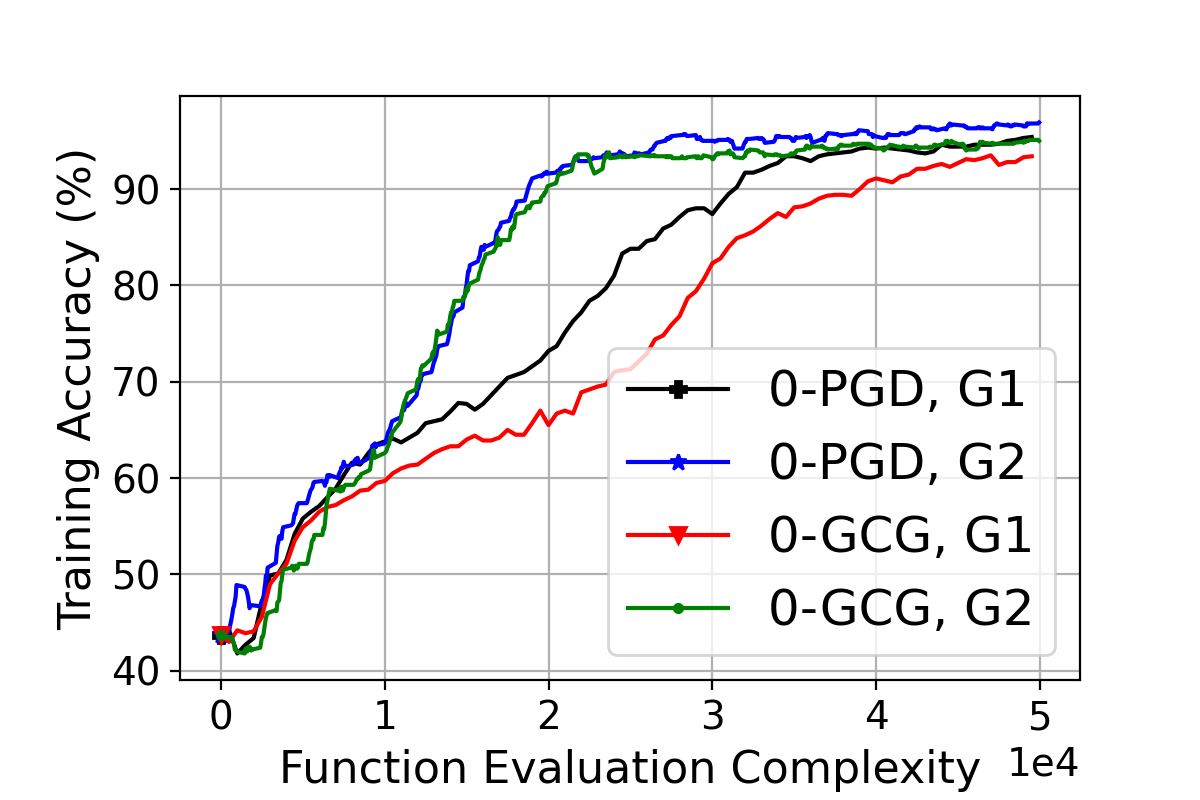}
\end{minipage} 
\begin{minipage}{.33\textwidth}
    \centering
    \includegraphics[width=\textwidth]{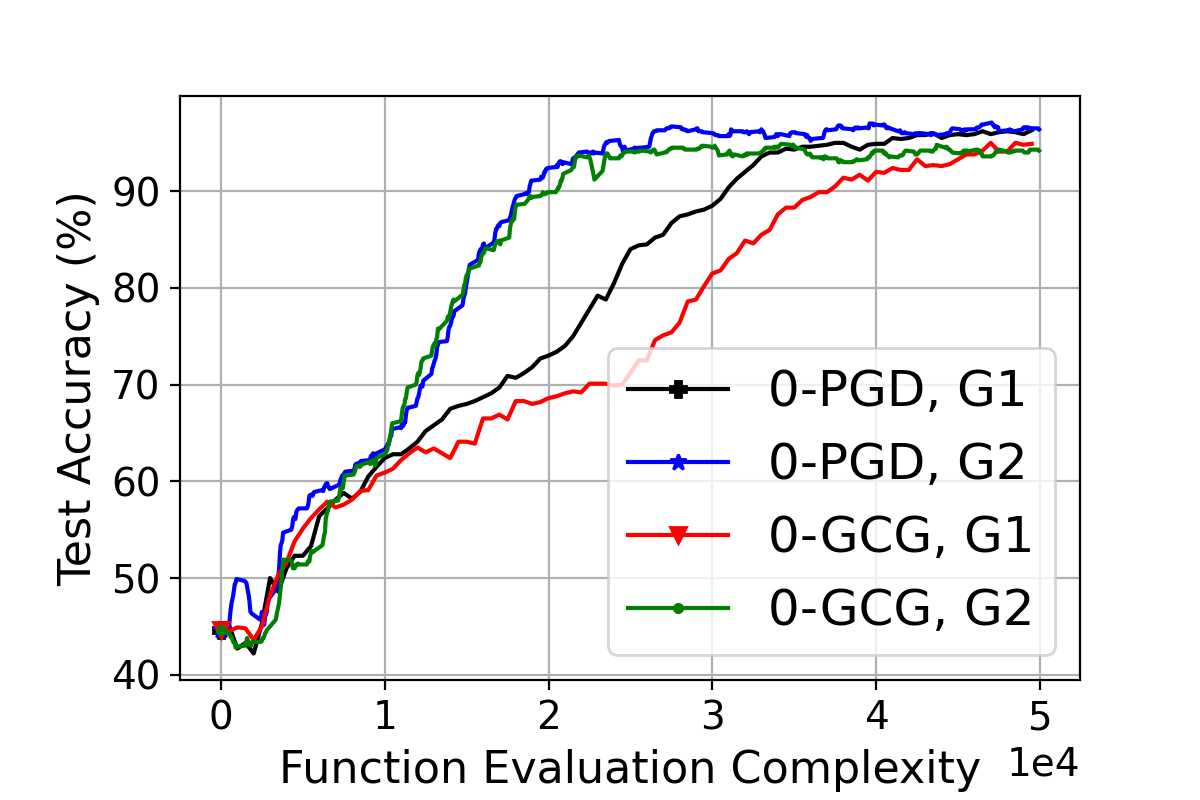}
\end{minipage}
\caption{Experimental results on regularized ReLu network.}
\label{experiment_result}
\end{figure*}
\vspace{-10pt}
\section{Conclusion}\label{sec:conclusion}
In this work, we have proposed two new notions of stationary points for stochastic nonconvex nonsmooth composite optimization, the $(\gamma,\delta,\epsilon)$-proximal Goldstein stationary point (PGSP) and the 
$(\delta,\epsilon)$-conditional gradient Goldstein stationary point (CGGSP). We have also proved that the zeroth-order proximal gradient descent algorithm (0-PGD) and the zeroth-order generalized conditional gradient algorithm (0-GCG) converge to a $(\gamma,\delta,\epsilon)$-PGSP and a $(\delta,\epsilon)$-CGGSP respectively, and obtained the convergence rates and complexity results. The experimental results on regularized ReLu network show that these algorithms converge well.   



\newpage
\bibliographystyle{iclr2026_conference}
\bibliography{lits}

\begin{thebibliography}{75}
\providecommand{\natexlab}[1]{#1}
\providecommand{\url}[1]{\texttt{#1}}
\expandafter\ifx\csname urlstyle\endcsname\relax
  \providecommand{\doi}[1]{doi: #1}\else
  \providecommand{\doi}{doi: \begingroup \urlstyle{rm}\Url}\fi

\bibitem[Ali et~al.(2024)Ali, Javeed, Noor, Rauf, Kadry, and Gandomi]{ali2024parkinson}
Liaqat Ali, Ashir Javeed, Adeeb Noor, Hafiz~Tayyab Rauf, Seifedine Kadry, and Amir~H Gandomi.
\newblock Parkinson’s disease detection based on features refinement through l1 regularized svm and deep neural network.
\newblock \emph{Scientific Reports}, 14\penalty0 (1):\penalty0 1333, 2024.

\bibitem[Assun{\c{c}}{\~a}o et~al.(2025)Assun{\c{c}}{\~a}o, Ferreira, and Prudente]{assunccao2025generalized}
PB~Assun{\c{c}}{\~a}o, OP~Ferreira, and LF~Prudente.
\newblock A generalized conditional gradient method for multiobjective composite optimization problems.
\newblock \emph{Optimization}, 74\penalty0 (2):\penalty0 473--503, 2025.

\bibitem[Attouch et~al.(2013)Attouch, Bolte, and Svaiter]{attouch2013convergence}
Hedy Attouch, J{\'e}r{\^o}me Bolte, and Benar~Fux Svaiter.
\newblock Convergence of descent methods for semi-algebraic and tame problems: proximal algorithms, forward--backward splitting, and regularized gauss--seidel methods.
\newblock \emph{Mathematical programming}, 137\penalty0 (1):\penalty0 91--129, 2013.

\bibitem[Bach(2015)]{bach2015duality}
Francis Bach.
\newblock Duality between subgradient and conditional gradient methods.
\newblock \emph{SIAM Journal on Optimization}, 25\penalty0 (1):\penalty0 115--129, 2015.

\bibitem[Bauschke et~al.(2017)Bauschke, Bolte, and Teboulle]{bauschke2017descent}
Heinz~H Bauschke, J{\'e}r{\^o}me Bolte, and Marc Teboulle.
\newblock A descent lemma beyond lipschitz gradient continuity: first-order methods revisited and applications.
\newblock \emph{Mathematics of Operations Research}, 42\penalty0 (2):\penalty0 330--348, 2017.

\bibitem[Beck \& Teboulle(2009)Beck and Teboulle]{beck2009fast}
Amir Beck and Marc Teboulle.
\newblock A fast iterative shrinkage-thresholding algorithm for linear inverse problems.
\newblock \emph{SIAM journal on imaging sciences}, 2\penalty0 (1):\penalty0 183--202, 2009.

\bibitem[Braun et~al.(2022)Braun, Carderera, Combettes, Hassani, Karbasi, Mokhtari, and Pokutta]{braun2022conditional}
G{\'a}bor Braun, Alejandro Carderera, Cyrille~W Combettes, Hamed Hassani, Amin Karbasi, Aryan Mokhtari, and Sebastian Pokutta.
\newblock Conditional gradient methods.
\newblock \emph{ArXiv:2211.14103}, 2022.

\bibitem[Bredies et~al.(2005)Bredies, Lorenz, and Maass]{bredies2005equivalence}
Kristian Bredies, Dirk Lorenz, and Peter Maass.
\newblock \emph{Equivalence of a generalized conditional gradient method and the method of surrogate functionals}.
\newblock Citeseer, 2005.

\bibitem[Bredies et~al.(2009)Bredies, Lorenz, and Maass]{bredies2009generalized}
Kristian Bredies, Dirk~A Lorenz, and Peter Maass.
\newblock A generalized conditional gradient method and its connection to an iterative shrinkage method.
\newblock \emph{Computational Optimization and applications}, 42:\penalty0 173--193, 2009.

\bibitem[Chen et~al.(2023)Chen, Xu, and Luo]{chen2023faster}
Lesi Chen, Jing Xu, and Luo Luo.
\newblock Faster gradient-free algorithms for nonsmooth nonconvex stochastic optimization.
\newblock In \emph{Proceedings of the International Conference on Machine Learning (ICML)}, pp.\  5219--5233, 2023.

\bibitem[Chen et~al.(2024)Chen, Tang, and Yang]{chen2024generalized}
Wang Chen, Liping Tang, and Xinmin Yang.
\newblock Generalized conditional gradient methods for multiobjective composite optimization problems with h $\{$$\backslash$" o$\}$ lder condition.
\newblock \emph{ArXiv:2410.18465}, 2024.

\bibitem[Cutkosky et~al.(2023)Cutkosky, Mehta, and Orabona]{cutkosky2023optimal}
Ashok Cutkosky, Harsh Mehta, and Francesco Orabona.
\newblock Optimal stochastic non-smooth non-convex optimization through online-to-non-convex conversion.
\newblock In \emph{Proceedings of the International Conference on Machine Learning (ICML)}, pp.\  6643--6670, 2023.

\bibitem[Davis \& Drusvyatskiy(2019)Davis and Drusvyatskiy]{davis2019stochastic}
Damek Davis and Dmitriy Drusvyatskiy.
\newblock Stochastic model-based minimization of weakly convex functions.
\newblock \emph{SIAM Journal on Optimization}, 29\penalty0 (1):\penalty0 207--239, 2019.

\bibitem[Davis \& Grimmer(2019)Davis and Grimmer]{davis2019proximally}
Damek Davis and Benjamin Grimmer.
\newblock Proximally guided stochastic subgradient method for nonsmooth, nonconvex problems.
\newblock \emph{SIAM Journal on Optimization}, 29\penalty0 (3):\penalty0 1908--1930, 2019.

\bibitem[Davis et~al.(2022)Davis, Drusvyatskiy, Lee, Padmanabhan, and Ye]{davis2022gradient}
Damek Davis, Dmitriy Drusvyatskiy, Yin~Tat Lee, Swati Padmanabhan, and Guanghao Ye.
\newblock A gradient sampling method with complexity guarantees for lipschitz functions in high and low dimensions.
\newblock In \emph{Proceedings of the International Conference on Neural Information Processing Systems (Neurips)}, 2022.

\bibitem[Ding et~al.(2025)Ding, Li, and Toh]{ding2025nonconvex}
Kuangyu Ding, Jingyang Li, and Kim-Chuan Toh.
\newblock Nonconvex stochastic bregman proximal gradient method with application to deep learning.
\newblock \emph{Journal of Machine Learning Research}, 26\penalty0 (39):\penalty0 1--44, 2025.

\bibitem[Duchi et~al.(2015)Duchi, Jordan, Wainwright, and Wibisono]{duchi2015optimal}
John~C Duchi, Michael~I Jordan, Martin~J Wainwright, and Andre Wibisono.
\newblock Optimal rates for zero-order convex optimization: The power of two function evaluations.
\newblock \emph{IEEE Transactions on Information Theory}, 61\penalty0 (5):\penalty0 2788--2806, 2015.

\bibitem[Frank et~al.(1956)]{frank1956algorithm}
Marguerite Frank et~al.
\newblock An algorithm for quadratic programming.
\newblock \emph{Naval research logistics quarterly}, 3\penalty0 (1-2):\penalty0 95--110, 1956.

\bibitem[Fukushima \& Mine(1981)Fukushima and Mine]{fukushima1981generalized}
Masao Fukushima and Hisashi Mine.
\newblock A generalized proximal point algorithm for certain non-convex minimization problems.
\newblock \emph{International Journal of Systems Science}, 12\penalty0 (8):\penalty0 989--1000, 1981.

\bibitem[Ghadimi(2019)]{ghadimi2019conditional}
Saeed Ghadimi.
\newblock Conditional gradient type methods for composite nonlinear and stochastic optimization.
\newblock \emph{Mathematical Programming}, 173:\penalty0 431--464, 2019.

\bibitem[Ghadimi \& Lan(2016)Ghadimi and Lan]{ghadimi2016accelerated}
Saeed Ghadimi and Guanghui Lan.
\newblock Accelerated gradient methods for nonconvex nonlinear and stochastic programming.
\newblock \emph{Mathematical Programming}, 156\penalty0 (1):\penalty0 59--99, 2016.

\bibitem[Ghadimi et~al.(2016)Ghadimi, Lan, and Zhang]{ghadimi2016mini}
Saeed Ghadimi, Guanghui Lan, and Hongchao Zhang.
\newblock Mini-batch stochastic approximation methods for nonconvex stochastic composite optimization.
\newblock \emph{Mathematical Programming}, 155\penalty0 (1):\penalty0 267--305, 2016.

\bibitem[Ghosh et~al.(2024)Ghosh, Zhang, Sun, Qu, Ravishankar, and Wang]{ghosh2024optimal}
Avrajit Ghosh, Xitong Zhang, Kenneth~K Sun, Qing Qu, Saiprasad Ravishankar, and Rongrong Wang.
\newblock Optimal eye surgeon: Finding image priors through sparse generators at initialization.
\newblock In \emph{Proceedings of the International Conference on Machine Learning (ICML)}, 2024.

\bibitem[Goldstein(1977)]{goldstein1977optimization}
Allen~A Goldstein.
\newblock Optimization of lipschitz continuous functions.
\newblock \emph{Mathematical Programming}, 13:\penalty0 14--22, 1977.

\bibitem[Gu et~al.(2021)Gu, Zheng, Pan, and Tong]{gu2021ramp}
Mingen Gu, Jinde Zheng, Haiyang Pan, and Jinyu Tong.
\newblock Ramp sparse support matrix machine and its application in roller bearing fault diagnosis.
\newblock \emph{Applied Soft Computing}, 113:\penalty0 107928, 2021.

\bibitem[Guo et~al.(2022)Guo, Liu, and Xiao]{guo2022unified}
Jiahong Guo, Huiling Liu, and Xiantao Xiao.
\newblock A unified analysis of stochastic gradient-free frank--wolfe methods.
\newblock \emph{International Transactions in Operational Research}, 29\penalty0 (1):\penalty0 63--86, 2022.

\bibitem[Harchaoui et~al.(2015)Harchaoui, Juditsky, and Nemirovski]{harchaoui2015conditional}
Zaid Harchaoui, Anatoli Juditsky, and Arkadi Nemirovski.
\newblock Conditional gradient algorithms for norm-regularized smooth convex optimization.
\newblock \emph{Mathematical Programming}, 152\penalty0 (1):\penalty0 75--112, 2015.

\bibitem[He et~al.(2016)He, Zhang, Ren, and Sun]{he2016deep}
Kaiming He, Xiangyu Zhang, Shaoqing Ren, and Jian Sun.
\newblock Deep residual learning for image recognition.
\newblock In \emph{Proceedings of the IEEE conference on computer vision and pattern recognition}, pp.\  770--778, 2016.

\bibitem[Ito et~al.(2023)Ito, Lu, and He]{ito2023parameter}
Masaru Ito, Zhaosong Lu, and Chuan He.
\newblock A parameter-free conditional gradient method for composite minimization under h{\"o}lder condition.
\newblock \emph{Journal of Machine Learning Research}, 24\penalty0 (166):\penalty0 1--34, 2023.

\bibitem[Jaggi(2013)]{jaggi2013revisiting}
Martin Jaggi.
\newblock Revisiting frank-wolfe: Projection-free sparse convex optimization.
\newblock In \emph{Proceedings of the International Conference on Machine Learning (ICML)}, pp.\  427--435, 2013.

\bibitem[Jiang \& Zhang(2014)Jiang and Zhang]{jiang2014iteration}
Bo~Jiang and Shuzhong Zhang.
\newblock Iteration bounds for finding the $\epsilon$-stationary points for structured nonconvex optimization.
\newblock \emph{ArXiv:1410.4066}, 2014.

\bibitem[Jordan et~al.(2023)Jordan, Kornowski, Lin, Shamir, and Zampetakis]{jordan2023deterministic}
Michael Jordan, Guy Kornowski, Tianyi Lin, Ohad Shamir, and Manolis Zampetakis.
\newblock Deterministic nonsmooth nonconvex optimization.
\newblock In \emph{Proceedings of the Conference on Learning Theory (COLT)}, pp.\  4570--4597, 2023.

\bibitem[Juditsky \& Nemirovski(2016)Juditsky and Nemirovski]{juditsky2016solving}
Anatoli Juditsky and Arkadi Nemirovski.
\newblock Solving variational inequalities with monotone operators on domains given by linear minimization oracles.
\newblock \emph{Mathematical Programming}, 156\penalty0 (1):\penalty0 221--256, 2016.

\bibitem[Kornowski \& Shamir(2024)Kornowski and Shamir]{kornowski2024algorithm}
Guy Kornowski and Ohad Shamir.
\newblock An algorithm with optimal dimension-dependence for zero-order nonsmooth nonconvex stochastic optimization.
\newblock \emph{Journal of Machine Learning Research}, 25\penalty0 (122):\penalty0 1--14, 2024.

\bibitem[Krizhevsky(2009)]{krizhevsky2009learning}
A~Krizhevsky.
\newblock Learning multiple layers of features from tiny images.
\newblock \emph{Master's thesis, University of Toronto}, 2009.

\bibitem[Krizhevsky et~al.(2017)Krizhevsky, Sutskever, and Hinton]{krizhevsky2017imagenet}
Alex Krizhevsky, Ilya Sutskever, and Geoffrey~E Hinton.
\newblock Imagenet classification with deep convolutional neural networks.
\newblock \emph{Communications of the ACM}, 60\penalty0 (6):\penalty0 84--90, 2017.

\bibitem[Kumar et~al.(2021)Kumar, Shaikh, Li, Bilal, and Yin]{kumar2021pruning}
Aakash Kumar, Ali~Muhammad Shaikh, Yun Li, Hazrat Bilal, and Baoqun Yin.
\newblock Pruning filters with l1-norm and capped l1-norm for cnn compression.
\newblock \emph{Applied Intelligence}, 51:\penalty0 1152--1160, 2021.

\bibitem[Lan \& Zhou(2016)Lan and Zhou]{lan2016conditional}
Guanghui Lan and Yi~Zhou.
\newblock Conditional gradient sliding for convex optimization.
\newblock \emph{SIAM Journal on Optimization}, 26\penalty0 (2):\penalty0 1379--1409, 2016.

\bibitem[Latafat et~al.(2022)Latafat, Themelis, Ahookhosh, and Patrinos]{latafat2022bregman}
Puya Latafat, Andreas Themelis, Masoud Ahookhosh, and Panagiotis Patrinos.
\newblock Bregman finito/miso for nonconvex regularized finite sum minimization without lipschitz gradient continuity.
\newblock \emph{SIAM Journal on Optimization}, 32\penalty0 (3):\penalty0 2230--2262, 2022.

\bibitem[Laude \& Patrinos(2025)Laude and Patrinos]{laude2025anisotropic}
Emanuel Laude and Panagiotis Patrinos.
\newblock Anisotropic proximal gradient.
\newblock \emph{Mathematical Programming}, pp.\  1--45, 2025.

\bibitem[Li \& Lin(2015)Li and Lin]{li2015accelerated}
Huan Li and Zhouchen Lin.
\newblock Accelerated proximal gradient methods for nonconvex programming.
\newblock In \emph{Proceedings of the International Conference on Neural Information Processing Systems (Neurips)}, pp.\  379--387, 2015.

\bibitem[Li et~al.(2017)Li, Zhou, Liang, and Varshney]{li2017convergence}
Qunwei Li, Yi~Zhou, Yingbin Liang, and Pramod~K Varshney.
\newblock Convergence analysis of proximal gradient with momentum for nonconvex optimization.
\newblock In \emph{International Conference on Machine Learning}, pp.\  2111--2119, 2017.

\bibitem[Li et~al.(2022)Li, Wang, and Liu]{li2022auto}
Yuxiang Li, Dongqing Wang, and Feng Liu.
\newblock The auto-correlation function aided sparse support matrix machine for eeg-based fatigue detection.
\newblock \emph{IEEE Transactions on Circuits and Systems II: Express Briefs}, 70\penalty0 (2):\penalty0 836--840, 2022.

\bibitem[Li \& Li(2018)Li and Li]{li2018simple}
Zhize Li and Jian Li.
\newblock A simple proximal stochastic gradient method for nonsmooth nonconvex optimization.
\newblock In \emph{Proceedings of the International Conference on Neural Information Processing Systems (Neurips)}, pp.\  5569--5579, 2018.

\bibitem[Lin et~al.(2020)Lin, Jin, and Jordan]{lin2020gradient}
Tianyi Lin, Chi Jin, and Michael Jordan.
\newblock On gradient descent ascent for nonconvex-concave minimax problems.
\newblock In \emph{Proceedings of the International Conference on Machine Learning (ICML)}, pp.\  6083--6093, 2020.

\bibitem[Lin et~al.(2022)Lin, Zheng, and Jordan]{lin2022gradient}
Tianyi Lin, Zeyu Zheng, and Michael~I Jordan.
\newblock Gradient-free methods for deterministic and stochastic nonsmooth nonconvex optimization.
\newblock In \emph{Proceedings of the 36th International Conference on Neural Information Processing Systems}, pp.\  26160--26175, 2022.

\bibitem[Liu et~al.(2024)Liu, Chen, Luo, and Low]{liu2024zeroth}
Zhuanghua Liu, Cheng Chen, Luo Luo, and Bryan Kian~Hsiang Low.
\newblock Zeroth-order methods for constrained nonconvex nonsmooth stochastic optimization.
\newblock In \emph{Proceedings of the International Conference on Machine Learning (ICML)}, 2024.

\bibitem[Lu et~al.(2018)Lu, Freund, and Nesterov]{lu2018relatively}
Haihao Lu, Robert~M Freund, and Yurii Nesterov.
\newblock Relatively smooth convex optimization by first-order methods, and applications.
\newblock \emph{SIAM Journal on Optimization}, 28\penalty0 (1):\penalty0 333--354, 2018.

\bibitem[Lu et~al.(2024)Lu, Xie, and Xiong]{lu2024multi}
Junqi Lu, Xijiong Xie, and Yujie Xiong.
\newblock Multi-view hypergraph regularized lp norm least squares twin support vector machines for semi-supervised learning.
\newblock \emph{Pattern Recognition}, 156:\penalty0 110753, 2024.

\bibitem[Ma \& Huang(2025)Ma and Huang]{ma2025revisiting}
Shaocong Ma and Heng Huang.
\newblock Revisiting zeroth-order optimization: Minimum-variance two-point estimators and directionally aligned perturbations.
\newblock In \emph{Proceedings of the International Conference on Learning Representations (ICLR)}, 2025.

\bibitem[Makela \& Neittaanmaki(1992)Makela and Neittaanmaki]{makela1992nonsmooth}
Marko~M Makela and Pekka Neittaanmaki.
\newblock \emph{Nonsmooth optimization: analysis and algorithms with applications to optimal control}.
\newblock World Scientific, 1992.

\bibitem[Mardani et~al.(2018)Mardani, Sun, Vasawanala, Papyan, Monajemi, Pauly, and Donoho]{mardani2018neural}
Morteza Mardani, Qingyun Sun, Shreyas Vasawanala, Vardan Papyan, Hatef Monajemi, John Pauly, and David Donoho.
\newblock Neural proximal gradient descent for compressive imaging.
\newblock In \emph{Proceedings of the International Conference on Neural Information Processing Systems (Neurips)}, pp.\  9596--9606, 2018.

\bibitem[Mazumdar \& Rawat(2019)Mazumdar and Rawat]{mazumdar2019learning}
Arya Mazumdar and Ankit~Singh Rawat.
\newblock Learning and recovery in the relu model.
\newblock In \emph{2019 57th Annual Allerton Conference on Communication, Control, and Computing (Allerton)}, pp.\  108--115. IEEE, 2019.

\bibitem[McCulloch et~al.(2024)McCulloch, St.~Pierre, Linka, and Kuhl]{mcculloch2024sparse}
Jeremy~A McCulloch, Skyler~R St.~Pierre, Kevin Linka, and Ellen Kuhl.
\newblock On sparse regression, lp-regularization, and automated model discovery.
\newblock \emph{International Journal for Numerical Methods in Engineering}, 125\penalty0 (14):\penalty0 e7481, 2024.

\bibitem[Nesterov(2018)]{nesterov2018complexity}
Yu~Nesterov.
\newblock Complexity bounds for primal-dual methods minimizing the model of objective function.
\newblock \emph{Mathematical Programming}, 171\penalty0 (1):\penalty0 311--330, 2018.

\bibitem[Nesterov \& Spokoiny(2017)Nesterov and Spokoiny]{nesterov2017random}
Yurii Nesterov and Vladimir Spokoiny.
\newblock Random gradient-free minimization of convex functions.
\newblock \emph{Foundations of Computational Mathematics}, 17\penalty0 (2):\penalty0 527--566, 2017.

\bibitem[Nitanda(2014)]{nitanda2014stochastic}
Atsushi Nitanda.
\newblock Stochastic proximal gradient descent with acceleration techniques.
\newblock In \emph{Proceedings of the International Conference on Neural Information Processing Systems (Neurips)}, pp.\  1574--1582, 2014.

\bibitem[Parikh et~al.(2014)Parikh, Boyd, et~al.]{parikh2014proximal}
Neal Parikh, Stephen Boyd, et~al.
\newblock Proximal algorithms.
\newblock \emph{Foundations and trends{\textregistered} in Optimization}, 1\penalty0 (3):\penalty0 127--239, 2014.

\bibitem[Pham et~al.(2020)Pham, Nguyen, Phan, and Tran-Dinh]{pham2020proxsarah}
Nhan~H Pham, Lam~M Nguyen, Dzung~T Phan, and Quoc Tran-Dinh.
\newblock Proxsarah: An efficient algorithmic framework for stochastic composite nonconvex optimization.
\newblock \emph{Journal of Machine Learning Research}, 21\penalty0 (110):\penalty0 1--48, 2020.

\bibitem[Rakotomamonjy et~al.(2015)Rakotomamonjy, Flamary, and Courty]{rakotomamonjy2015generalized}
Alain Rakotomamonjy, R{\'e}mi Flamary, and Nicolas Courty.
\newblock Generalized conditional gradient: analysis of convergence and applications.
\newblock \emph{ArXiv:1510.06567}, 2015.

\bibitem[Reddi et~al.(2016)Reddi, Sra, P{\'o}czos, and Smola]{reddi2016proximal}
Sashank~J Reddi, Suvrit Sra, Barnab{\'a}s P{\'o}czos, and Alexander~J Smola.
\newblock Proximal stochastic methods for nonsmooth nonconvex finite-sum optimization.
\newblock In \emph{Proceedings of the International Conference on Neural Information Processing Systems (Neurips)}, pp.\  1153--1161, 2016.

\bibitem[Rockafellar(1970)]{rockafellar1970convex}
R~Tyrrell Rockafellar.
\newblock \emph{Convex analysis}.
\newblock Number~28. Princeton university press, 1970.

\bibitem[Shen et~al.(2024)Shen, Das, Greenewald, Sattigeri, Wornell, and Ghosh]{shen2024thermometer}
Maohao Shen, Subhro Das, Kristjan Greenewald, Prasanna Sattigeri, Gregory Wornell, and Soumya Ghosh.
\newblock Thermometer: towards universal calibration for large language models.
\newblock In \emph{Proceedings of the International Conference on Machine Learning (ICML)}, pp.\  44687--44711, 2024.

\bibitem[Tian \& So(2024)Tian and So]{tian2024no}
Lai Tian and Anthony Man-Cho So.
\newblock No dimension-free deterministic algorithm computes approximate stationarities of lipschitzians.
\newblock \emph{Mathematical Programming}, 208\penalty0 (1):\penalty0 51--74, 2024.

\bibitem[Tian et~al.(2022)Tian, Zhou, and So]{tian2022finite}
Lai Tian, Kaiwen Zhou, and Anthony Man-Cho So.
\newblock On the finite-time complexity and practical computation of approximate stationarity concepts of lipschitz functions.
\newblock In \emph{Proceedings of the International Conference on Machine Learning (ICML)}, pp.\  21360--21379, 2022.

\bibitem[Wang \& Shao(2024)Wang and Shao]{wang2024fast}
Huajun Wang and Yuanhai Shao.
\newblock Fast generalized ramp loss support vector machine for pattern classification.
\newblock \emph{Pattern Recognition}, 146:\penalty0 109987, 2024.

\bibitem[Wang \& Han(2023)Wang and Han]{wang2023bregman}
Qingsong Wang and Deren Han.
\newblock A bregman stochastic method for nonconvex nonsmooth problem beyond global lipschitz gradient continuity.
\newblock \emph{Optimization Methods and Software}, 38\penalty0 (5):\penalty0 914--946, 2023.

\bibitem[Wang et~al.(2021{\natexlab{a}})Wang, Yao, and Kwok]{wang2021scalable}
Yaqing Wang, Quanming Yao, and James Kwok.
\newblock A scalable, adaptive and sound nonconvex regularizer for low-rank matrix learning.
\newblock In \emph{Proceedings of the Web Conference}, pp.\  1798--1808, 2021{\natexlab{a}}.

\bibitem[Wang et~al.(2021{\natexlab{b}})Wang, Lacotte, and Pilanci]{wang2021hidden}
Yifei Wang, Jonathan Lacotte, and Mert Pilanci.
\newblock The hidden convex optimization landscape of regularized two-layer relu networks: an exact characterization of optimal solutions.
\newblock In \emph{Proceedings of the International Conference on Learning Representations (ICLR)}, 2021{\natexlab{b}}.

\bibitem[Xu et~al.(2014)Xu, Huang, Weinberger, and Zheng]{xu2014gradient}
Zhixiang Xu, Gao Huang, Kilian~Q Weinberger, and Alice~X Zheng.
\newblock Gradient boosted feature selection.
\newblock In \emph{Proceedings of the 20th ACM SIGKDD international conference on Knowledge discovery and data mining}, pp.\  522--531, 2014.

\bibitem[Yang \& Yu(2020)Yang and Yu]{yang2020fast}
Yingzhen Yang and Jiahui Yu.
\newblock Fast proximal gradient descent for a class of non-convex and non-smooth sparse learning problems.
\newblock In \emph{Uncertainty in Artificial Intelligence (UAI)}, pp.\  1253--1262, 2020.

\bibitem[Yu et~al.(2017)Yu, Zhang, and Schuurmans]{yu2017generalized}
Yaoliang Yu, Xinhua Zhang, and Dale Schuurmans.
\newblock Generalized conditional gradient for sparse estimation.
\newblock \emph{Journal of Machine Learning Research}, 18\penalty0 (144):\penalty0 1--46, 2017.

\bibitem[Zhang et~al.(2020)Zhang, Lin, Jegelka, Sra, and Jadbabaie]{zhang2020complexity}
Jingzhao Zhang, Hongzhou Lin, Stefanie Jegelka, Suvrit Sra, and Ali Jadbabaie.
\newblock Complexity of finding stationary points of nonconvex nonsmooth functions.
\newblock In \emph{Proceedings of the International Conference on Machine Learning (ICML)}, pp.\  11173--11182, 2020.

\bibitem[Zhang(2008)]{zhang2009multi}
Tong Zhang.
\newblock Multi-stage convex relaxation for learning with sparse regularization.
\newblock In \emph{Proceedings of the International Conference on Neural Information Processing Systems (Neurips)}, pp.\  1929--1936, 2008.

\bibitem[Zheng et~al.(2018)Zheng, Zhu, Qin, Chen, and Heng]{zheng2018sparse}
Qingqing Zheng, Fengyuan Zhu, Jing Qin, Badong Chen, and Pheng-Ann Heng.
\newblock Sparse support matrix machine.
\newblock \emph{Pattern Recognition}, 76:\penalty0 715--726, 2018.

\end{thebibliography}

\appendix
\onecolumn

\addcontentsline{toc}{section}{Appendix} 

\newpage
\part{Appendix} 
\parttoc 

\section{Related Works}

\noindent\textbf{Finite-time Convergence Results on Nonconvex Nonsmooth Optimization:} Only recently are finite-time convergence results obtained on nonconvex nonsmooth optimization, a special case of our nonconvex nonsmooth composite optimization (\ref{eq:obj}). \cite{davis2019stochastic,davis2019proximally} obtain finite-time convergence for stochastic optimization of a $\rho$-weakly convex function. \cite{zhang2020complexity} obtains the first dimension-free convergence result to achieve a $(\delta,\epsilon)$-Goldstein stationary point, which involves impractical subgradient computation. Such subgradient computation is removed by \citep{davis2022gradient,tian2022finite} using perturbations. \cite{jordan2023deterministic,tian2024no} prove that deterministic algorithms cannot obtain dimension-free convergence for non-convex nonsmooth optimization. \cite{cutkosky2023optimal} obtains the optimal complexity result using online learning. 

\cite{nesterov2017random} obtains the first finite-time convergence result of zeroth-order methods for stochastic nonconvex nonsmooth optimization. \cite{lin2022gradient} designs zeroth-order algorithms with provable finite-time convergence to $(\delta,\epsilon)$-Goldstein stationary point. Their oracle complexity is improved by \citep{chen2023faster} using variance reduction, and further improved to the optimal complexity $\mathcal{O}(d\delta^{-1}\epsilon^{-3})$ by \citep{kornowski2024algorithm} using the online learning technique in \citep{cutkosky2023optimal}. \\




\noindent\textbf{Proximal Gradient Methods:} Various proximal gradient methods are very popular for various composite optimization problem (\ref{eq:obj}). For example, \cite{fukushima1981generalized} derives asymptotic convergence of proximal gradient method for smooth composite optimization problem \footnote{Here, smooth composite optimization means the major part $F$ of the composite optimization problem (\ref{eq:obj}) is Lipschitz smooth.} under both convex and nonconvex settings. \cite{attouch2013convergence} analyzes the convergence of multiple variants of inexact proximal algorithms \footnote{Proximal gradient method is called forward–backward splitting in \citep{attouch2013convergence}.} on smooth nonconvex composite optimization satisfying Kurdyka–{\L}ojasiewicz geometry. \cite{ghadimi2016mini} analyzes the convergence of proximal gradient method for non-stochastic smooth composite optimization, and that of the two algorithm variants with minibatch and zeroth-order gradient estimations for stochastic smooth nonconvex and convex composite optimization problems. \cite{beck2009fast} applies Nesterov's acceleration to proximal gradient method\footnote{Proximal gradient method is called iterative shrinkage-thresholding algorithms (ISTA) in \citep{beck2009fast}} and uses backtracking technique to estimate Lipschitz constant, for smooth convex composite optimization. \cite{nitanda2014stochastic} combines Nesterov's acceleration and SVRG variance reduction in proximal gradient methods and thus improved convergence rate for stochastic smooth convex composite optimization. \citep{li2015accelerated,ghadimi2016accelerated,li2017convergence} study accelerated proximal gradient (APG) algorithms for nonconvex smooth composite optimization. Stochastic proximal gradient methods have been accelerated by  variance reduction techniques, such as SVRG \citep{reddi2016proximal,li2017convergence}, SAGA \citep{reddi2016proximal}, SARAH \citep{pham2020proxsarah}, and adaptive APG algorithm with Spider variance reduction. Proximal gradient methods have also been extended from Euclidean distance to Bregman distance. For example, Bregman distance based proximal gradient method has provable convergence results for Bregman distance based relatively smooth composite optimization under both convex \citep{bauschke2017descent} and nonconvex settings \citep{latafat2022bregman,wang2023bregman}. \cite{ding2025nonconvex} obtains the optimal sample complexity results of both Bregman proximal gradient method and its momentum variant for smooth adaptable composite optimization. \cite{laude2025anisotropic} analyzes an anisotropic proximal gradient method for anisotropic smooth composite optimization.\\

\noindent\textbf{Conditional Gradient Methods: } 
\cite{frank1956algorithm} proposes conditional gradient method (also known as Frank-Wolfe algorithm) for quadratic programming. \cite{lan2016conditional} extends conditional gradient method to convex optimization by skipping gradient evaluations, and achieved optimal computation complexity results. \cite{bredies2005equivalence} proposes a generalized conditional gradient method which extends to composite optimization, the focus of this work, and obtains asymptotic convergence result for nonconvex setting. Since then, generalized conditional gradient methods have been applied to various composite optimization problems. For example, \cite{jiang2014iteration} studies nonconvex and nonsmooth composite optimization with block-structure. \cite{bach2015duality,nesterov2018complexity} focus on general convex composite optimization problems. \cite{harchaoui2015conditional} studies norm-regularized convex optimization. \cite{rakotomamonjy2015generalized} obtains the non-asymptotic convergence rate of generalized conditional gradient method for convex composite optimization. \cite{bach2015duality} shows that the non-projected subgradient method for the primal convex composite optimization problem is equivalent to the conditional gradient applied to the dual optimization problem. \cite{yu2017generalized} improves generalized conditional gradient method for sparse optimization problems with convex gauge regularizers. \cite{ghadimi2019conditional} focuses on smooth and weakly smooth nonconvex composite optimization problems. \cite{ito2023parameter} studies weakly convex composite optimization under Holder condition. \cite{guo2022unified} provides a unified convergence analysis for zeroth-order conditional gradient methods on both stochastic constrained and composite optimization problems, under both convex and nonconvex settings. Recently, \cite{chen2024generalized,assunccao2025generalized} extends conditional gradient methods to multiobjective composite optimization. See \cite{braun2022conditional} for a survey of conditional gradient methods. 


\begin{figure}[t]
    \centering
    \includegraphics[width=0.95\textwidth]{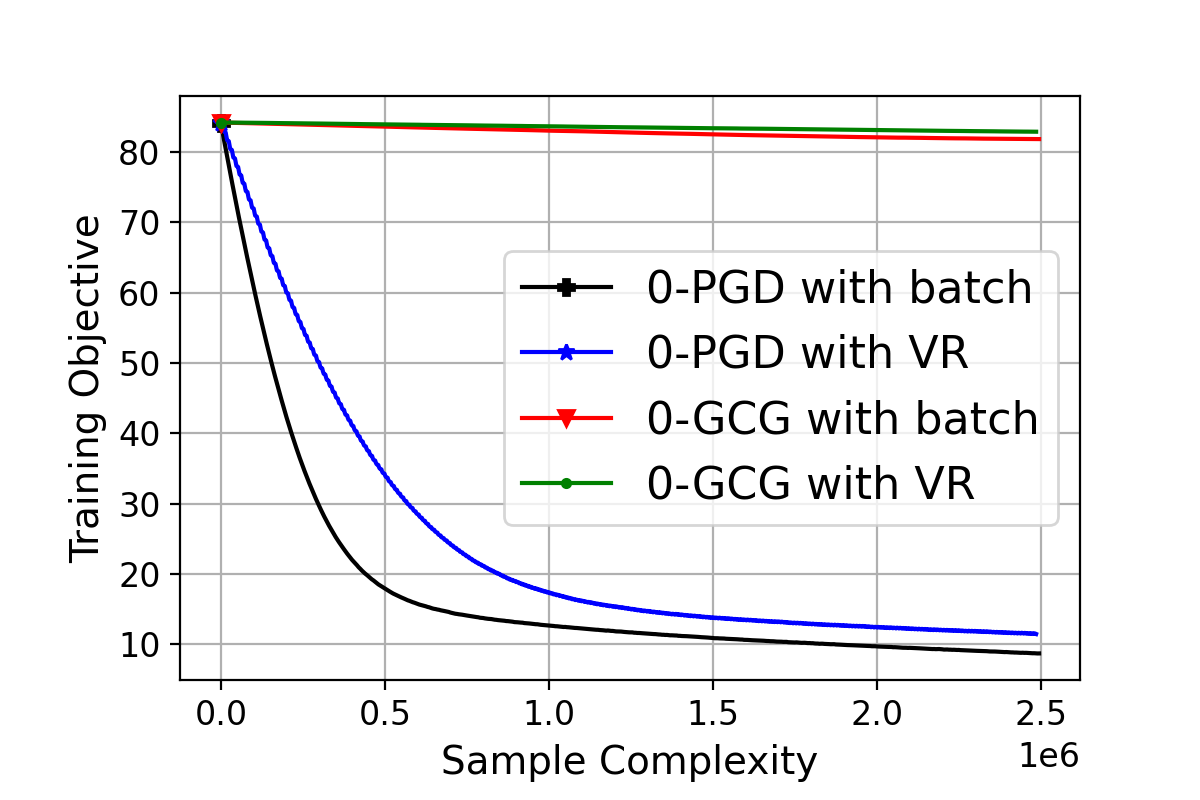}
    \caption{Experimental results on regularized Resnet-20.}
    \label{experiment_resnet20}
\end{figure}
\section{Experiments on Regularized Resnet}


We train a regularized Resnet-20 \citep{he2016deep}\footnote{The Resnet-20 code comes from \url{https://github.com/sarwaridas/ResNet20_PyTorch/blob/main/resnet_cifar10_TRIAL.ipynb}} with cross-entropy loss for classification task on the Cifar 10 image data \citep{krizhevsky2009learning}, using our 0-PGD algorithm (Algorithm \ref{alg:prox}) and 0-GCG algorithm (Algorithm \ref{alg:GCG}). In particular, we use the objective function (\ref{eq:obj_relu}) where $\xi_i$ denotes an image-label pair in the Cifar 10 training set, and we select $\lambda_1=\lambda_2=0.01$.

We implement our Algorithm \ref{alg:prox} (0-PGD) and Algorithm \ref{alg:GCG} (0-GCG), and for each algorithm we test both gradient estimation options, G1 (minibatch) and G2 (variance reduction), all with radius $\delta=0.001$. For both algorithms with option G1 we select batchsize 5000 and run 500 iterations. For both algorithms with option G2 we run 2615 iterations, start each epoch of 10 iterations with batchsize 5000, and use batchsize 500 for the rest iterations. We use fine-tuned stepsizes 0.005 for 0-PGD with G1, 0.001 for 0-PGD with G2, $5\times 10^{-5}$ for 0-GCG with G1, and $10^{-5}$ for 0-GCG with G2. The experiment is conducted on Python 3.11.5 in Red Hat Enterprise Linux 8.10 (Ootpa), using 1 RTX A6000 GPU (48GB memory) and 4 CPU cores (20GB memory). 

At each iteration $t$, we evaluate the objective function $\phi(x_t)$ (Eq. (\ref{eq:obj_relu})) on the Cifar-10 training data. In Figure \ref{experiment_resnet20}, we plot $\phi(x_t)$ VS the function evaluation complexity (the total number of function evaluations up to each iteration $t$), which shows that Algorithm \ref{alg:prox} (0-PGD) converges well while Algorithm \ref{alg:GCG} (0-GCG) descents on the objective very slowly. When tuning hyperparameters for 0-GCG, we found that 0-GCG ascents and diverges even with slightly larger stepsizes, and descents slightly faster with fine-tuned stepsizes when using larger batchsizes (e.g. 50000 for option G1 and start of each epoch of option G2, and 5000 for the rest iterations for option G2, which is time consuming). This phenomenon can be largely explained by comparing the theoretical batchsizes required by these algorithms, as shown in Table \ref{table:batchsizes} below, which indicates that the batchsizes required by 0-GCG depend on the regularizer-dependent radius $R>0$ defined by Assumption \ref{assum:h2} while 0-PGD does not depend on $R$. Therefore, when $R$ is very large, 0-PGD can be much more efficient than 0-GCG. 
\begin{table}[h]
\caption{Batchsizes required by zeroth-order 
proximal gradient descent (0-PGD) and zeroth-order generalized conditional gradient algorithms (0-GCG).}\label{table:batchsizes}
\centering
\begin{tabular}{lcc}
\hline
 & 0-PGD (Algorithm \ref{alg:prox}) & 0-GCG (Algorithm \ref{alg:GCG}) \\ \hline
Batchsize $B$ for option G1 & $\mathcal{O}(G^2\epsilon^{-2}d)$ (Theorem \ref{thm:prox_batch}) & $\mathcal{O}({R^2}G^2\epsilon^{-2}d)$ (Theorem \ref{thm:GCG_batch}) \\ \hline
Large batchsize $B_0$ for option G2 & $\mathcal{O}(G^2\epsilon^{-2}d)$ (Theorem \ref{thm:prox_VR}) & $\mathcal{O}({R^2}G^2\epsilon^{-2}d)$ (Theorem \ref{thm:GCG_VR}) \\ \hline
Small batchsize $B_1$ for option G2 & $\mathcal{O}(G\epsilon^{-1}\sqrt{d})$ (Theorem \ref{thm:prox_VR}) & $\mathcal{O}({R}G\epsilon^{-1}\sqrt{d})$ (Theorem \ref{thm:GCG_VR}) \\ \hline
\end{tabular}
\end{table}

\section{Supporting Lemmas}
\begin{lemma}[Proposition 1 of \citep{ghadimi2016mini}]\label{lemma:prox_contraction}
For any $x,g,g'\in\mathbb{R}^d$, $\gamma>0$ and proper convex function $h$, we have
\begin{align}
\|\mathcal{G}_{\gamma h}(x,g')-\mathcal{G}_{\gamma h}(x,g)\|\le \|g'-g\|
\end{align}
\end{lemma}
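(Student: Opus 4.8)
The plan is to reduce the claimed inequality to the nonexpansiveness of the proximal operator. First I would unfold the definition of the proximal gradient mapping (\ref{eq:prox_grad}): since $\mathcal{G}_{\gamma h}(x,g)=\frac{1}{\gamma}[x-{\rm prox}_{\gamma h}(x-\gamma g)]$, the $x$ terms cancel upon subtraction, giving
\begin{align}
\mathcal{G}_{\gamma h}(x,g')-\mathcal{G}_{\gamma h}(x,g)=\frac{1}{\gamma}\big[{\rm prox}_{\gamma h}(x-\gamma g)-{\rm prox}_{\gamma h}(x-\gamma g')\big].\nonumber
\end{align}
Taking norms, it then suffices to establish that ${\rm prox}_{\gamma h}$ is $1$-Lipschitz, i.e., $\|{\rm prox}_{\gamma h}(z_1)-{\rm prox}_{\gamma h}(z_2)\|\le\|z_1-z_2\|$, and then specialize to $z_1=x-\gamma g$ and $z_2=x-\gamma g'$, where $\|z_1-z_2\|=\gamma\|g'-g\|$.

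To prove the nonexpansiveness, I would use the first-order optimality condition for the strongly convex problem (\ref{eq:prox_map}). Writing $p_1={\rm prox}_{\gamma h}(z_1)$ and $p_2={\rm prox}_{\gamma h}(z_2)$, the optimality conditions read $\frac{1}{\gamma}(z_1-p_1)\in\partial h(p_1)$ and $\frac{1}{\gamma}(z_2-p_2)\in\partial h(p_2)$. Invoking the monotonicity of the subdifferential of the proper closed convex function $h$ (namely $\langle u_1-u_2,\,p_1-p_2\rangle\ge 0$ whenever $u_i\in\partial h(p_i)$), I obtain
\begin{align}
\big\langle (z_1-z_2)-(p_1-p_2),\,p_1-p_2\big\rangle\ge 0,\nonumber
\end{align}
which rearranges to $\|p_1-p_2\|^2\le\langle z_1-z_2,\,p_1-p_2\rangle$. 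Applying the Cauchy--Schwarz inequality to the right-hand side and dividing by $\|p_1-p_2\|$ (the case $p_1=p_2$ being trivial) yields $\|p_1-p_2\|\le\|z_1-z_2\|$.

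Finally, I would combine the two steps: substituting $z_1=x-\gamma g$ and $z_2=x-\gamma g'$ into the nonexpansiveness bound gives $\|{\rm prox}_{\gamma h}(x-\gamma g)-{\rm prox}_{\gamma h}(x-\gamma g')\|\le\gamma\|g'-g\|$, and dividing by $\gamma>0$ in the displayed identity above yields the claim $\|\mathcal{G}_{\gamma h}(x,g')-\mathcal{G}_{\gamma h}(x,g)\|\le\|g'-g\|$. The only nontrivial ingredient is the nonexpansiveness of the proximal operator; the main (and quite mild) obstacle is stating the monotonicity step cleanly for a possibly nonsmooth $h$, but since $h$ is merely assumed proper closed convex this follows directly from the definition of the subdifferential and no smoothness is required.
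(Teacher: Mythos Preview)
Your argument is correct and is precisely the standard route: reduce to nonexpansiveness of ${\rm prox}_{\gamma h}$ via the monotonicity of $\partial h$, then specialize to $z_1=x-\gamma g$, $z_2=x-\gamma g'$. The paper itself does not supply a proof of this lemma; it simply quotes it as Proposition~1 of \citep{ghadimi2016mini}, so there is nothing further to compare against.
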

\begin{lemma}[Lemma 1 of \citep{ghadimi2016mini}]\label{lemma:prox_inprod}
For any $x,g\in\mathbb{R}^d$, $\gamma>0$ and proper convex function $h$, we have
\begin{align}
\langle g,\mathcal{G}_{\gamma h}(x,g)\rangle \ge 
\|\mathcal{G}_{\gamma h}(x,g)\|^2+\frac{1}{\gamma}\big[h[{\rm prox}_{\gamma h}(x-\gamma g)]-h(x)\big]. \label{eq:prox_inprod}
\end{align}
\end{lemma}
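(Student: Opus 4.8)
The plan is to unwind the definition of the proximal gradient mapping through the first-order optimality condition of the proximal minimization and then invoke the subgradient inequality for the convex function $h$. Write $x^+:={\rm prox}_{\gamma h}(x-\gamma g)$ for brevity, so that by Definition \ref{def:PGSP} we have $\mathcal{G}_{\gamma h}(x,g)=\frac{1}{\gamma}(x-x^+)$, and hence $x-x^+=\gamma\,\mathcal{G}_{\gamma h}(x,g)$. Throughout I abbreviate $v:=\mathcal{G}_{\gamma h}(x,g)$.

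First I would record the optimality condition satisfied by $x^+$. Since $x^+$ is the unique minimizer of the strongly convex objective $y\mapsto h(y)+\frac{1}{2\gamma}\|y-(x-\gamma g)\|^2$, Fermat's rule gives $0\in\partial h(x^+)+\frac{1}{\gamma}\big(x^+-(x-\gamma g)\big)$, which rearranges to
\begin{align}
\mathcal{G}_{\gamma h}(x,g)-g=\frac{1}{\gamma}(x-x^+)-g\in\partial h(x^+).\nonumber
\end{align}
Next I would apply the convexity (subgradient) inequality for $h$ with the subgradient $v-g\in\partial h(x^+)$, evaluated at the test point $x$:
\begin{align}
h(x)\ge h(x^+)+\langle v-g,\,x-x^+\rangle.\nonumber
\end{align}
Substituting $x-x^+=\gamma v$ and expanding the inner product yields $h(x)\ge h(x^+)+\gamma\|v\|^2-\gamma\langle g,v\rangle$. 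Dividing by $\gamma>0$ and rearranging to isolate $\langle g,v\rangle$ produces exactly the claimed inequality (\ref{eq:prox_inprod}), after recalling $v=\mathcal{G}_{\gamma h}(x,g)$ and $x^+={\rm prox}_{\gamma h}(x-\gamma g)$.

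There is no serious obstacle here; the only point requiring care is the bookkeeping in the optimality condition, namely that the normal term arising from differentiating $\frac{1}{2\gamma}\|y-(x-\gamma g)\|^2$ combines with $g$ to leave precisely $\mathcal{G}_{\gamma h}(x,g)-g$ as an element of $\partial h(x^+)$, rather than some sign-flipped variant. Because $h$ is proper, closed, and convex (Assumption \ref{assum:h}), $\partial h$ is the ordinary convex subdifferential, Fermat's rule applies without any constraint qualification, and the subgradient inequality holds globally and in particular at the test point $x$, so every step above is justified.
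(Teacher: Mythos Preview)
Your proof is correct. The paper itself does not reprove this lemma but simply cites it as Lemma~1 of \citep{ghadimi2016mini}; the argument you give---Fermat's rule at the proximal minimizer yielding $v-g\in\partial h(x^+)$, followed by the convex subgradient inequality at the test point $x$---is exactly the standard derivation found in that reference, so there is nothing to contrast.
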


\begin{lemma}\label{lemma:var_batch}
For i.i.d. random variables $\{X_i\}_{i=1}^N$, we have
\begin{align}
\mathbb{E}\Big[\Big\|\frac{1}{N}\sum_{i=1}^N [X_i-\mathbb{E}(X_i)]\Big\|^2\Big]=\frac{1}{N}\mathbb{E}\big[\|X_1-\mathbb{E}X_1\|^2\big]\le \frac{1}{N}\mathbb{E}\big[\|X_1\|^2\big]\label{eq:var_batch}
\end{align}
\end{lemma}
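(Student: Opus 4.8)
The plan is to establish this as the vector-valued version of the standard variance-of-the-sample-mean identity. First I would center the summands by setting $Y_i := X_i - \mathbb{E}(X_i)$, so that each satisfies $\mathbb{E}(Y_i) = \mathbf{0}$ and the left-hand side becomes $\mathbb{E}\big[\big\|\frac{1}{N}\sum_{i=1}^N Y_i\big\|^2\big]$. Expanding the squared Euclidean norm as a double sum of inner products gives
\[
\mathbb{E}\Big[\Big\|\tfrac{1}{N}\sum_{i=1}^N Y_i\Big\|^2\Big] = \frac{1}{N^2}\sum_{i=1}^N\sum_{j=1}^N \mathbb{E}\big[\langle Y_i, Y_j\rangle\big].
\]

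Next I would split the double sum into off-diagonal terms ($i \neq j$) and diagonal terms ($i = j$). For each off-diagonal term, independence of $X_i$ and $X_j$ passes to $Y_i$ and $Y_j$, so $\mathbb{E}[\langle Y_i, Y_j\rangle] = \langle \mathbb{E}(Y_i), \mathbb{E}(Y_j)\rangle = 0$ because both factors are centered; hence every cross term vanishes. For each diagonal term, identical distribution gives $\mathbb{E}[\langle Y_i, Y_i\rangle] = \mathbb{E}[\|Y_1\|^2]$. Summing the $N$ surviving diagonal contributions and dividing by $N^2$ yields $\frac{1}{N}\mathbb{E}[\|Y_1\|^2] = \frac{1}{N}\mathbb{E}[\|X_1 - \mathbb{E}X_1\|^2]$, which is precisely the claimed equality.

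For the final inequality I would expand once more, using $\mathbb{E}[\|X_1 - \mathbb{E}X_1\|^2] = \mathbb{E}[\|X_1\|^2] - \|\mathbb{E}X_1\|^2$ (the cross term cancels because the subtracted vector is the mean) and then dropping the nonnegative quantity $\|\mathbb{E}X_1\|^2 \ge 0$. This is an entirely routine computation with no genuine obstacle; the only point meriting mild care is that the variables are vector-valued, so the scalar variance must be handled through the inner-product expansion, and the vanishing of the cross terms must be justified by independence of the centered vectors rather than by a one-dimensional variance argument.
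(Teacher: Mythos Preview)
Your proposal is correct and follows essentially the same approach as the paper: expand the squared norm into a double sum of inner products, use independence to kill the off-diagonal terms and identical distribution to collapse the diagonal ones, then obtain the inequality by expanding $\mathbb{E}\|X_1-\mathbb{E}X_1\|^2=\mathbb{E}\|X_1\|^2-\|\mathbb{E}X_1\|^2$ and dropping the nonnegative term. The only cosmetic difference is that you introduce the centered variables $Y_i$ up front, whereas the paper works directly with $X_i-\mathbb{E}(X_i)$ throughout.
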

\begin{proof}
\begin{align}
&\mathbb{E}\Big[\Big\|\frac{1}{N}\sum_{i=1}^N [X_i-\mathbb{E}(X_i)]\Big\|^2\Big]\nonumber\\
=&\mathbb{E}\Big[\frac{1}{N^2}\sum_{i=1}^N \sum_{j=1}^N \big\langle X_i-\mathbb{E}(X_i),X_j-\mathbb{E}(X_j)\big\rangle\Big]\nonumber\\
=&\frac{1}{N^2}\sum_{i=1}^N \mathbb{E}[\|X_i-\mathbb{E}(X_i)\|^2]+ \frac{1}{N^2}\sum_{i=1}^N \sum_{j=1,j\ne i}^N \mathbb{E}\big[\big\langle X_i-\mathbb{E}(X_i),X_j-\mathbb{E}(X_j)\big\rangle\big]\nonumber\\
\overset{(a)}{=}&\frac{1}{N^2}\sum_{i=1}^N\mathbb{E}[\|X_1-\mathbb{E}(X_1)\|^2]+ \frac{1}{N^2}\sum_{i=1}^N \sum_{j=1,j\ne i}^N \big[\big\langle \mathbb{E}[X_i-\mathbb{E}(X_i)], \mathbb{E}[X_j-\mathbb{E}(X_j)]\big\rangle\big]\nonumber\\
\overset{(b)}{=}&\frac{1}{N}\mathbb{E}[\|X_1-\mathbb{E}(X_1)\|^2]\nonumber\\
=&\frac{1}{N}\mathbb{E}\big[\langle X_1-\mathbb{E}(X_1), X_1-\mathbb{E}(X_1)\rangle\big]\nonumber\\
=&\frac{1}{N}\mathbb{E}\big[\|X_1\|^2-\langle\mathbb{E}(X_1),X_1\rangle-\langle X_1,\mathbb{E}(X_1)\rangle+\|\mathbb{E}(X_1)\|^2\big]\nonumber\\
=&\frac{1}{N}\mathbb{E}\big[\|X_1\|^2-\|\mathbb{E}(X_1)\|^2\big]\nonumber\\
\overset{(c)}{\le}&\frac{1}{N}\mathbb{E}\big[\|X_1\|^2\big]
\end{align}
where (a) uses the fact that $\{X_i\}_{i=1}^N$ are i.i.d. samples, (b) proves the "$=$" of Eq. (\ref{eq:var_batch}), and (c) proves the "$\le$" of Eq. (\ref{eq:var_batch}). 
\end{proof}

\begin{lemma}[Lemma E.1 of \citep{lin2022gradient}]\label{lemma:ghat_err}
Suppose Assumption \ref{assum:f} holds and $\xi\sim\mathcal{P}$ and $u\in {\rm Uniform}(\mathcal{S}_d(1))$ (uniformly distribution in $\mathcal{S}_d(1):=\{y\in\mathbb{R}^d:\|y\|=1\}$). Then for any $x\in\mathbb{R}^d$, the stochastic gradient estimator (\ref{eq:ghat}) satisfies $\mathbb{E}[\hat{g}_{\delta}(x;u,\xi)|x]=\nabla F_{\delta}(x)$ and $\mathbb{E}\big[\|\hat{g}_{\delta}(x;u,\xi)\|^2\big|x\big]\le 16\sqrt{2\pi}dG^2$.
\end{lemma}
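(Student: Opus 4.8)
I would prove the two claims separately: unbiasedness by a symmetrization-plus-Stokes argument, and the second-moment bound by a concentration argument that beats the naive Lipschitz estimate. Throughout I condition on $x$ and treat $u\sim{\rm Uniform}(\mathcal{S}_d(1))$ and $\xi\sim\mathcal{P}$ as independent.

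\textbf{Unbiasedness.} First I would use the tower property to peel off $\xi$: since $F=\mathbb{E}_\xi[f_\xi]$ and $u,\xi$ are independent, passing $\mathbb{E}_\xi$ inside the estimator (\ref{eq:ghat}) turns $\hat{g}_\delta(x;u,\xi)$ into $\frac{d}{2\delta}[F(x+\delta u)-F(x-\delta u)]u$ in expectation. Next I would exploit the symmetry of ${\rm Uniform}(\mathcal{S}_d(1))$ under $u\mapsto -u$, which sends the term $F(x-\delta u)u$ to $-F(x+\delta u)u$; the two summands therefore coincide in expectation, giving
\[
\mathbb{E}_{u,\xi}[\hat{g}_\delta(x;u,\xi)\mid x]=\tfrac{d}{\delta}\,\mathbb{E}_{u}[F(x+\delta u)\,u].
\]
It then remains to identify the right-hand side with $\nabla F_\delta(x)$, where $F_\delta(x)=\mathbb{E}_{u}[F(x+\delta u)]$. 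This is the standard divergence-theorem (Stokes) identity for spherical smoothing: differentiating the smoothed average under the integral sign produces exactly the boundary term $\frac{d}{\delta}\mathbb{E}_u[F(x+\delta u)u]$. Since Lemma \ref{lemma:Fdelta} already guarantees that $F_\delta$ is differentiable everywhere, differentiation under the expectation is justified and this closes the first claim.

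\textbf{Second moment.} Because $\|u\|=1$, I would begin from the exact identity $\|\hat{g}_\delta(x;u,\xi)\|^2=\frac{d^2}{4\delta^2}\big(f_\xi(x+\delta u)-f_\xi(x-\delta u)\big)^2$, which eliminates the vector $u$ and leaves a scalar difference quotient. The naive bound from $L_\xi$-Lipschitz continuity, $|f_\xi(x+\delta u)-f_\xi(x-\delta u)|\le 2\delta L_\xi$, only yields $\|\hat{g}_\delta\|^2\le d^2 L_\xi^2$, i.e.\ the \emph{wrong} $\mathcal{O}(d^2)$ dependence. To recover the claimed linear-in-$d$ bound I would control the difference quotient in expectation over the random direction, using that along a uniformly random $u$ the one-dimensional slope concentrates, its typical size being of order $\|\nabla f_\xi\|/\sqrt{d}$ rather than $\|\nabla f_\xi\|$. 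Concretely, the key estimate is $\mathbb{E}_u\big[(f_\xi(x+\delta u)-f_\xi(x-\delta u))^2\big]=\mathcal{O}(\delta^2 L_\xi^2/d)$. I would prove it by relating the squared difference to $\mathrm{Var}_u[f_\xi(x+\delta u)]$ through the elementary inequality $\mathbb{E}[(A-B)^2]\le 4\,\mathrm{Var}(A)$ for identically distributed $A=f_\xi(x+\delta u)$ and $B=f_\xi(x-\delta u)$ (they are identically distributed by the $u\mapsto -u$ symmetry), and then bounding the variance of the $\delta L_\xi$-Lipschitz map $u\mapsto f_\xi(x+\delta u)$ on the sphere by a Poincar\'e/concentration inequality; the constant is cleanest when routed through the Gaussian model $u=z/\|z\|$, $z\sim\mathcal{N}(0,I_d)$, which is what produces the factor $\sqrt{2\pi}$. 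Substituting into the exact identity gives $\mathbb{E}_u[\|\hat{g}_\delta\|^2\mid x,\xi]\le 16\sqrt{2\pi}\,d\,L_\xi^2$, and a final tower step with $\mathbb{E}_\xi[L_\xi^2]\le G^2$ from Assumption \ref{assum:f} yields the stated $16\sqrt{2\pi}\,dG^2$.

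\textbf{Main obstacle.} I expect unbiasedness to be routine, while the real difficulty is the second-moment bound, where the naive Lipschitz estimate loses a full factor of $d$. The crux is therefore the concentration estimate $\mathbb{E}_u[(f_\xi(x+\delta u)-f_\xi(x-\delta u))^2]=\mathcal{O}(\delta^2 L_\xi^2/d)$ for a merely Lipschitz (non-differentiable) $f_\xi$: tracking the sharp constant through the sphere-to-Gaussian comparison, and justifying the spherical concentration step without assuming any smoothness of $f_\xi$, is where the care is needed.
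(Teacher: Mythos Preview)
The paper does not prove this lemma itself; it is quoted as Lemma~E.1 of \citep{lin2022gradient}. Your plan is essentially the argument in that reference: unbiasedness via the $u\mapsto -u$ symmetrization plus the divergence-theorem identity, and the second-moment bound via $\mathbb{E}_u[(A-B)^2]\le 4\,\mathrm{Var}_u(A)$ followed by a Lipschitz concentration inequality on $\mathcal{S}_d(1)$ to control $\mathrm{Var}_u[f_\xi(x+\delta u)]$ at order $(\delta L_\xi)^2/d$; the factor $\sqrt{2\pi}$ indeed enters through the sub-Gaussian tail constant in that concentration step, and the final tower over $\xi$ with $\mathbb{E}_\xi[L_\xi^2]\le G^2$ closes it.

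One caveat on the unbiasedness step: the Stokes identity $\nabla F_\delta(x)=\tfrac{d}{\delta}\,\mathbb{E}_{u}[F(x+\delta u)\,u]$ holds when $F_\delta$ is the average of $F$ over the unit \emph{ball}, not the sphere. The paper's displayed definition of $F_\delta$ uses the sphere, which appears to be a slip (the cited \citep{lin2022gradient} averages over the ball, and only the ball definition is consistent with both Lemma~\ref{lemma:Fdelta} and the unbiasedness claim here). This is an inconsistency in the paper's statement, not a flaw in your plan.
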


\begin{lemma}\label{lemma:gtavg_err}
Suppose Assumption \ref{assum:f} holds and we have i.i.d. samples $\{u_{i,t}\}_{i=1}^{B_t}\sim{\rm Uniform}(\mathcal{S}_d(1))$ and $\{\xi_{i,t}\}_{i=1}^{B_t}\sim\mathcal{P}$. Then the stochastic gradient estimator (\ref{eq:gt_avg}) satisfies $\mathbb{E}[g_t|x_t]=\nabla F_{\delta}(x_t)$ and $\mathbb{E}\big[\|g_t-\nabla F_{\delta}(x_t)\|^2\big|x_t\big]\le \frac{16\sqrt{2\pi}dG^2}{B_t}$.
\end{lemma}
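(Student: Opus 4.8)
The goal is to establish two claims about the minibatch estimator $g_t=\frac{1}{B_t}\sum_{i=1}^{B_t}\hat g_\delta(x_t,u_{i,t},\xi_{i,t})$: unbiasedness conditioned on $x_t$, and the variance bound $\mathbb{E}[\|g_t-\nabla F_\delta(x_t)\|^2\,|\,x_t]\le \frac{16\sqrt{2\pi}dG^2}{B_t}$. Both follow almost immediately from the two lemmas already in hand. First I would condition on $x_t$ throughout, so that the only randomness is in the i.i.d. pairs $(u_{i,t},\xi_{i,t})$, which are independent of $x_t$ (the samples are drawn fresh at iteration $t$). By Lemma \ref{lemma:ghat_err}, each summand satisfies $\mathbb{E}[\hat g_\delta(x_t,u_{i,t},\xi_{i,t})\,|\,x_t]=\nabla F_\delta(x_t)$ and $\mathbb{E}[\|\hat g_\delta(x_t,u_{i,t},\xi_{i,t})\|^2\,|\,x_t]\le 16\sqrt{2\pi}dG^2$. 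Linearity of conditional expectation then gives $\mathbb{E}[g_t\,|\,x_t]=\frac{1}{B_t}\sum_{i=1}^{B_t}\nabla F_\delta(x_t)=\nabla F_\delta(x_t)$, which is the first claim.

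For the variance bound I would apply Lemma \ref{lemma:var_batch} with $X_i=\hat g_\delta(x_t,u_{i,t},\xi_{i,t})$ and $N=B_t$, working under the conditional law given $x_t$. Since the $X_i$ are i.i.d. (conditionally on $x_t$) and $\mathbb{E}[X_i\,|\,x_t]=\nabla F_\delta(x_t)$, Lemma \ref{lemma:var_batch} yields
\begin{align}
\mathbb{E}\big[\|g_t-\nabla F_\delta(x_t)\|^2\,\big|\,x_t\big]
=\mathbb{E}\Big[\Big\|\tfrac{1}{B_t}\textstyle\sum_{i=1}^{B_t}\big(X_i-\mathbb{E}[X_i\,|\,x_t]\big)\Big\|^2\,\Big|\,x_t\Big]
\le \tfrac{1}{B_t}\,\mathbb{E}\big[\|X_1\|^2\,\big|\,x_t\big].\nonumber
\end{align}
Plugging in the second moment bound $\mathbb{E}[\|X_1\|^2\,|\,x_t]\le 16\sqrt{2\pi}dG^2$ from Lemma \ref{lemma:ghat_err} finishes the proof.

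The only mild subtlety — and the one point worth stating carefully rather than a genuine obstacle — is that Lemma \ref{lemma:var_batch} is stated for i.i.d. random variables without conditioning, so I need to note that all of its steps go through verbatim when every expectation is replaced by the conditional expectation given $x_t$; this is legitimate because the fresh samples $(u_{i,t},\xi_{i,t})_{i=1}^{B_t}$ are independent of $x_t$, so conditionally on $x_t$ they remain i.i.d. with the marginal law used in Lemma \ref{lemma:ghat_err}. Everything else is a one-line application of linearity and the two cited lemmas, so I do not anticipate any real difficulty.
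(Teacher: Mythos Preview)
Your proposal is correct and matches the paper's own proof essentially line for line: both derive unbiasedness from linearity plus Lemma~\ref{lemma:ghat_err}, and both obtain the variance bound by applying Lemma~\ref{lemma:var_batch} to the i.i.d.\ summands $\hat g_\delta(x_t,u_{i,t},\xi_{i,t})$ and then invoking the second-moment bound from Lemma~\ref{lemma:ghat_err}. Your explicit remark about the conditional-on-$x_t$ application of Lemma~\ref{lemma:var_batch} is a point the paper leaves implicit, but otherwise the arguments are identical.
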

\begin{proof}
\begin{align}
\mathbb{E}[g_t|x_t]=\mathbb{E}\Big[\frac{1}{B_t}\sum_{i=1}^{B_t}\hat{g}_{\delta}(x_t,u_{i,t},\xi_{i,t})\Big|x_t\Big]=\frac{1}{B_t}\sum_{i=1}^{B_t}\mathbb{E}\big[\hat{g}_{\delta}(x_t,u_{i,t},\xi_{i,t})\big|x_t\big]\overset{(a)}{=}\nabla F_{\delta}(x_t)
\end{align}
where (a) uses Lemma \ref{lemma:ghat_err}. 
\begin{align}
\mathbb{E}\big[\|g_t-\nabla F_{\delta}(x_t)\|^2\big|x_t\big]=&\mathbb{E}\Big[\Big\|\frac{1}{B_t}\sum_{i=1}^{B_t}[\hat{g}_{\delta}(x_t,u_{i,t},\xi_{i,t})-\nabla F_{\delta}(x_t)\big]\Big]\Big\|^2\Big|x_t\Big]\nonumber\\
\overset{(a)}{\le}& \frac{1}{B_t}\mathbb{E}\big[\|\hat{g}_{\delta}(x_t,u_{1,t},\xi_{1,t})\|^2\big]\nonumber\\
\overset{(b)}{\le}& \frac{16\sqrt{2\pi}dG^2}{B_t}
\end{align}
where (a) uses $\mathbb{E}\big[\hat{g}_{\delta}(x_t,u_{i,t},\xi_{i,t})\big|x_t\big]=\nabla F_{\delta}(x_t)$ (based on Lemma \ref{lemma:ghat_err}) and Lemma \ref{lemma:var_batch}, and (b) uses Lemma \ref{lemma:ghat_err}. 
\end{proof}

\begin{lemma}[Lemma A.5 of \citep{liu2024zeroth}]\label{lemma:gtVR_err} Implement Algorithm  \ref{alg:prox} or \ref{alg:GCG} with Option G2. Select batchsize $B_t=B_0$ for any $t\!\!\mod q=0$ and $B_t=B_1$ for other $t$. Then under Assumption \ref{assum:f}, the stochastic gradient estimation $g_t\approx\nabla F_\delta(x_t)$ has the following error bound.  
\begin{align}
\mathbb{E}\Big[\|g_t-\nabla F_\delta(x_t)\|^2\Big] \le \frac{d^2 G^2}{\delta^2 B_1} \sum_{j=n_t q+1}^t\|x_j-x_{j-1}\|^2+\frac{16 \sqrt{2 \pi} d G^2}{B_0}\label{eq:gtVR_err}
\end{align}
where $n_t=\lfloor t/q\rfloor$. Specifically, when $t\!\!\mod q=0$ (i.e., $t=n_tq$), the upper bound above reduces to $\frac{16 \sqrt{2 \pi} d G^2}{B_0}$. 
\end{lemma}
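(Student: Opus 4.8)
The plan is to exploit the SPIDER/SARAH-style recursive structure of the Option~G2 estimator (\ref{eq:gt_VR}) and reduce the claim to two ingredients: a martingale (mean-zero increment) decomposition that makes the per-step variances add with no cross terms, and a Lipschitz bound on the difference of two-point estimators evaluated at consecutive iterates. I would write $e_t := g_t-\nabla F_\delta(x_t)$ for the estimation error and let $\mathcal{F}_{t-1}$ denote the history generated before the fresh samples $\{u_{i,t},\xi_{i,t}\}$ at iteration $t$ are drawn; thus $x_t$, $x_{t-1}$ and $e_{t-1}$ are all $\mathcal{F}_{t-1}$-measurable, while the fresh samples are independent of $\mathcal{F}_{t-1}$.

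First, for any $t$ with $t\bmod q\neq 0$, I would subtract $\nabla F_\delta(x_t)$ from (\ref{eq:gt_VR}) and add and subtract $\nabla F_\delta(x_{t-1})$ to get $e_t=e_{t-1}+\Delta_t$, where $\Delta_t=\frac{1}{B_1}\sum_{i=1}^{B_1}D_{i,t}-\big[\nabla F_\delta(x_t)-\nabla F_\delta(x_{t-1})\big]$ and $D_{i,t}:=\hat g_\delta(x_t;u_{i,t},\xi_{i,t})-\hat g_\delta(x_{t-1};u_{i,t},\xi_{i,t})$ uses the \emph{same} sample pair at both points. The unbiasedness $\mathbb{E}[\hat g_\delta(x;u,\xi)\mid x]=\nabla F_\delta(x)$ from Lemma \ref{lemma:ghat_err} gives $\mathbb{E}[\Delta_t\mid\mathcal{F}_{t-1}]=0$, so the subtracted term is exactly the conditional mean of $\frac1{B_1}\sum_i D_{i,t}$. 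Since $e_{t-1}$ is $\mathcal{F}_{t-1}$-measurable, the cross term vanishes and $\mathbb{E}\|e_t\|^2=\mathbb{E}\|e_{t-1}\|^2+\mathbb{E}\|\Delta_t\|^2$; and because the $D_{i,t}$ are i.i.d.\ given $\mathcal{F}_{t-1}$, Lemma \ref{lemma:var_batch} yields $\mathbb{E}[\|\Delta_t\|^2\mid\mathcal{F}_{t-1}]\le\frac{1}{B_1}\mathbb{E}[\|D_{1,t}\|^2\mid\mathcal{F}_{t-1}]$.

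Second, I would bound $\|D_{1,t}\|$ directly from (\ref{eq:ghat}). With $u=u_{1,t}$, $\xi=\xi_{1,t}$ and $\|u\|=1$, the shared direction makes the two estimators telescope into $\|D_{1,t}\|=\frac{d}{2\delta}\big|[f_\xi(x_t+\delta u)-f_\xi(x_{t-1}+\delta u)]-[f_\xi(x_t-\delta u)-f_\xi(x_{t-1}-\delta u)]\big|$. Applying $L_\xi$-Lipschitz continuity (Assumption \ref{assum:f}) to each bracket and the triangle inequality gives $\|D_{1,t}\|\le\frac{dL_\xi}{\delta}\|x_t-x_{t-1}\|$, hence $\mathbb{E}_\xi\|D_{1,t}\|^2\le\frac{d^2G^2}{\delta^2}\|x_t-x_{t-1}\|^2$ after using $\mathbb{E}_\xi[L_\xi^2]\le G^2$. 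This is the essential variance-reduction gain: the paired difference scales with the step $\|x_t-x_{t-1}\|$ rather than being $O(1)$. Combining with the previous step, $\mathbb{E}\|\Delta_t\|^2\le\frac{d^2G^2}{\delta^2 B_1}\mathbb{E}\|x_t-x_{t-1}\|^2$.

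Finally, I would unroll the recursion back to the most recent checkpoint $s=n_tq$ with $n_t=\lfloor t/q\rfloor$. At $s$ the estimator is a fresh minibatch (the $t\bmod q=0$ branch with batchsize $B_0$), so Lemma \ref{lemma:gtavg_err} supplies the base case $\mathbb{E}\|e_s\|^2\le\frac{16\sqrt{2\pi}dG^2}{B_0}$; telescoping $\mathbb{E}\|e_t\|^2=\mathbb{E}\|e_s\|^2+\sum_{j=s+1}^{t}\mathbb{E}\|\Delta_j\|^2$ and inserting the per-step bound yields (\ref{eq:gtVR_err}), with the sum empty (so the bound collapses to the base case) precisely when $t\bmod q=0$, matching the ``Specifically'' clause. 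The main obstacle is the conditioning bookkeeping: one must verify that the fresh samples at step $t$ are independent of the accumulated error $e_{t-1}$, which is what simultaneously legitimizes the orthogonal (cross-term-free) decomposition and the conditional application of Lemma \ref{lemma:var_batch}; everything else is the two bounds above.
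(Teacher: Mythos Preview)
Your argument is correct and is precisely the standard SPIDER/SARAH variance-reduction analysis. The paper itself does not supply a proof of this lemma: it simply cites Lemma~A.5 of \citep{liu2024zeroth} and states the result without argument. Your martingale decomposition $e_t=e_{t-1}+\Delta_t$ with $\mathbb{E}[\Delta_t\mid\mathcal{F}_{t-1}]=0$, followed by the Lipschitz bound $\|D_{1,t}\|\le \frac{dL_\xi}{\delta}\|x_t-x_{t-1}\|$ on the paired difference and the base case from Lemma~\ref{lemma:gtavg_err}, is exactly the argument one would find in the cited reference. One cosmetic point: the lemma as stated writes $\|x_j-x_{j-1}\|^2$ on the right-hand side without an expectation, whereas your telescoping naturally produces $\mathbb{E}\|x_j-x_{j-1}\|^2$; this is how the paper actually uses the bound downstream (see the proof of Theorem~\ref{thm:prox_VR}), so your version is the intended one.
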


\begin{lemma}\label{lemma:GCG_inR}
Implement Algorithm \ref{alg:GCG} with either Option G1 or G2, and the initialization $x_0$ satisfies $\|x_0-x^{(h)}\|\le R$. Then all the points $x_t,y_t$ generated from Algorithm \ref{alg:GCG} satisfies $\|x_t-x^{(h)}\|\le R$ and $\|y_t-x^{(h)}\|\le R$. 
\end{lemma}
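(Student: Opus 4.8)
The plan is to prove the two bounds $\|x_t-x^{(h)}\|\le R$ and $\|y_t-x^{(h)}\|\le R$ \emph{simultaneously} by induction on the iteration index $t$, the whole argument resting on two facts: the linear minimization oracle always returns a point of the ball $\mathcal{B}_d(x^{(h)},R)$, and the update~(\ref{eq:GCG_next}) is a convex interpolation between $x_t$ and $y_t$. The base case $t=0$ is immediate, since $\|x_0-x^{(h)}\|\le R$ is exactly the stated hypothesis on the initialization.

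For the inductive step, suppose $\|x_t-x^{(h)}\|\le R$. First I would control $y_t$: by~(\ref{eq:GCG_map}) we have $y_t\in\mathcal{L}_h(g_t)$, and provided the stochastic gradient estimate $g_t$ (from Option G1 or G2) satisfies $\|g_t\|\le G$, Proposition~\ref{prop:LMO_inR} gives $\mathcal{L}_h(g_t)\subseteq\mathcal{B}_d(x^{(h)},R)$, hence $\|y_t-x^{(h)}\|\le R$. Then I would control $x_{t+1}$: writing~(\ref{eq:GCG_next}) as $x_{t+1}=(1-\gamma)x_t+\gamma y_t$, this is a convex combination of $x_t$ and $y_t$ whenever $\gamma\in(0,1]$ (which is the case for the stepsizes prescribed in Theorems~\ref{thm:GCG_batch} and~\ref{thm:GCG_VR}); since $\mathcal{B}_d(x^{(h)},R)$ is convex and, by the inductive hypothesis together with the bound on $y_t$ just obtained, contains both $x_t$ and $y_t$, it also contains $x_{t+1}$. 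Explicitly, $\|x_{t+1}-x^{(h)}\|\le(1-\gamma)\|x_t-x^{(h)}\|+\gamma\|y_t-x^{(h)}\|\le(1-\gamma)R+\gamma R=R$, which closes the induction and yields the claim for every $t$.

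The step I expect to require the most care — and the only nontrivial one — is verifying the hypothesis $\|g_t\|\le G$ needed to invoke Proposition~\ref{prop:LMO_inR} (this is also what makes the minimization in~(\ref{eq:GCG_map}) well posed, i.e.\ attained, for a general $h$ satisfying Assumptions~\ref{assum:h}--\ref{assum:h2}). For the constraint regularizer $h=h_\Omega$ of~(\ref{eq:h_constraint}) this is vacuous: Assumption~\ref{assum:h2} forces $\Omega\subseteq\mathcal{B}_d(x^{(h)},R)$, and $\mathcal{L}_{h_\Omega}(g)=\argmin_{y\in\Omega}\langle y,g\rangle\subseteq\Omega$ for \emph{every} $g$, so the induction goes through verbatim with no restriction on $g_t$. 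For a general $h$ one must argue — using the two-point estimator~(\ref{eq:ghat}), the inclusion $\nabla F_\delta(x_t)\in\partial_\delta F(x_t)$ of Lemma~\ref{lemma:Fdelta}, and the fact that the Goldstein $\delta$-subdifferential of the $G$-Lipschitz map $F$ lies in the ball of radius $G$ — that the estimate $g_t$ fed to the oracle has norm at most $G$ (which is where any normalization/clipping built into the algorithm enters); once this is in hand, the remainder of the proof is the elementary convexity bookkeeping above.
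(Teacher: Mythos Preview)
Your inductive scheme is exactly the paper's: invoke Proposition~\ref{prop:LMO_inR} to place $y_t$ in $\mathcal{B}_d(x^{(h)},R)$, then use the convex combination $x_{t+1}=(1-\gamma)x_t+\gamma y_t$ to propagate the bound on $x_t$. The paper's proof is two lines and does precisely this.

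You are right, however, to flag the hypothesis $\|g_t\|\le G$ of Proposition~\ref{prop:LMO_inR} as the delicate point --- and in fact the paper does not address it either: it simply writes ``Based on Proposition~\ref{prop:LMO_inR}, $y_t\in\mathcal{L}_h(g_t)$ satisfies $\|y_t-x^{(h)}\|\le R$'' with no justification of $\|g_t\|\le G$. Your proposed routes to close this gap do not work for the algorithm as written: there is no clipping or normalization step in Algorithm~\ref{alg:GCG}, and the Goldstein-subdifferential argument bounds only the \emph{exact} smoothed gradient $\|\nabla F_\delta(x_t)\|\le G$ (item~2 of Lemma~\ref{lemma:Fdelta}), not the stochastic estimate $g_t$. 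Indeed, from~(\ref{eq:ghat}) one has $\|\hat g_\delta(x;u,\xi)\|=\tfrac{d}{2\delta}|f_\xi(x+\delta u)-f_\xi(x-\delta u)|\le d\,L_\xi$, so a single two-point sample can have norm of order $dL_\xi$, and neither the minibatch average~(\ref{eq:gt_avg}) nor the variance-reduced update~(\ref{eq:gt_VR}) brings this down to $G$ deterministically. So the concern you raised is a genuine gap, shared by the paper's own proof; your observation that for $h=h_\Omega$ the inclusion $\mathcal{L}_{h_\Omega}(g)\subseteq\Omega\subseteq\mathcal{B}_d(x^{(h)},R)$ holds for arbitrary $g$ is the one setting in which the argument is complete as stated.
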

\begin{proof}
Based on Proposition \ref{prop:LMO_inR}, $y_t\in\mathcal{L}_h(g_t)$ satisfies $\|y_t-x^{(h)}\|\le R$. 

We will prove $\|x_t-x^{(h)}\|\le R$ by induction. Suppose $\|x_k-x^{(h)}\|\le R$ for a certain natural number $k$. Then the update rule (\ref{eq:GCG_next}) implies that
\begin{align}
\|x_{k+1}-x^{(h)}\|=\|(1-\gamma)x_k+\gamma y_k\|\le (1-\gamma)\|x_k\|+\gamma \|y_k\|\le R.\nonumber
\end{align}
Since $\|x_0-x^{(h)}\|\le R$, we have proved that $\|x_t-x^{(h)}\|\le R$ for all $t$.
\end{proof}

\section{Proof of Proposition \ref{prop:solution_inR}}
Since $h$ is a proper and closed convex function based on Assumption \ref{assum:h}, the sub-level set $A=\{x\in\mathbb{R}^d:\phi(x)\le \phi(x^{(h)})\}$ is a closed set in which $h$ is continuous, based on Corollary 10.1.1 of \citep{rockafellar1970convex}. 

For any $x\in\mathbb{R}^d$ satisfying $\|x-x^{(h)}\|>R$, we have
\begin{align}
\phi(x)-\phi(x^{(h)})\!\overset{(a)}{=}\!F(x)-F(x^{(h)})+h(x)-h(x^{(h)})\!\overset{(b)}{>}\!-G\|x-x^{(h)}\|+G\|x-x^{(h)}\|=0,\nonumber
\end{align}
where (a) uses the objective function (\ref{eq:obj}), (b) uses Assumption \ref{assum:h2} and the $G$-Lipschitz continuity of $F$ (based on Assumption \ref{assum:f}). Therefore, $A\subset\mathcal{B}_d(x^{(h)},R)\overset{\rm def}{=}\{x\in\mathbb{R}^d:\|x-x^{(h)}\|\le R\}$, so $A$ is a compact set. Note that $\phi=F+h$ is continuous in $A$, so ${\arg\max}_{x\in A}\phi(x)$ is non-empty. Based on the definition of $A$, ${\arg\max}_{x\in\mathbb{R}^d}={\arg\max}_{x\in A}\phi(x)\subset A\subset\mathcal{B}_d(x^{(h)},R)$. 

\section{Proof of Proposition \ref{prop:PGSP}}
\textbf{Proof of Item 1:} Item 1 is directly implied by Definition \ref{def:PGSP} and $\partial F(x)\subset\partial_{\delta} F(x)$. 

\noindent\textbf{Proof of Item 2:} Since $x\in\mathbb{R}^d$ is a $(\gamma,\epsilon/(2L),\epsilon/2)$-PGSP, there exists $g\in\partial_{\delta}F(x)={\rm conv}\{\nabla F(y):\|y-x\|\le \frac{\epsilon}{2L}\}$ such that $\|\mathcal{G}_{\gamma h}(x,g)\|\le\frac{\epsilon}{2}$. Therefore, $g$ can be written as the following convex combination of gradients
\begin{align}
g=\sum_{k=1}^n \alpha_k \nabla F(x_k),\nonumber
\end{align}
where $\alpha_k\ge 0$, $\sum_{k=1}^n \alpha_k=1$ and $x_k\in\mathbb{R}^d$ satisfies $\|x_k-x\|\le\frac{\epsilon}{2L}$. Then, we prove that $x$ is a $(\gamma,\epsilon)$-PGSP as follows.
\begin{align}
&\|\mathcal{G}_{\gamma h}[x,\nabla F(x)]\|\nonumber\\
=&\frac{1}{\gamma}\|x-{\rm prox}_{\gamma h}[x-\gamma \nabla F(x)]\|\nonumber\\
=&\frac{1}{\gamma}\|x-{\rm prox}_{\gamma h}(x-\gamma g)\|+\frac{1}{\gamma}\|{\rm prox}_{\gamma h}(x-\gamma g)-{\rm prox}_{\gamma h}[x-\gamma \nabla F(x)]\|\nonumber\\
\overset{(a)}{\le}& \|\mathcal{G}_{\gamma h}(x,g)\|+\frac{1}{\gamma}\|(x-\gamma g)-[x-\gamma \nabla F(x)]\|\nonumber\\
\le&\frac{\epsilon}{2}+\Big\|\sum_{k=1}^n \alpha_k [\nabla F(x_k)-\nabla F(x)]\Big\|\nonumber\\
\le&\frac{\epsilon}{2}+\sum_{k=1}^n\alpha_k \|\nabla F(x_k)-\nabla F(x)\|\nonumber\\
\overset{(b)}{\le}&\frac{\epsilon}{2}+L\sum_{k=1}^n\alpha_k \|x_k-x\|\nonumber\\
\le&\frac{\epsilon}{2}+L\sum_{k=1}^n\alpha_k \Big(\frac{\epsilon}{2L}\Big)=\epsilon,\nonumber
\end{align}
where (a) uses Lemma \ref{lemma:prox_contraction}, and (b) uses the $L$-Lipschitz continuity of $\nabla F$. 

\noindent\textbf{Proof of Item 3:} Item 3 is directly implied by Definition \ref{def:PGSP} and item 3 of Lemma \ref{lemma:Fdelta}. 

\section{Proof of Proposition \ref{prop:LMO_inR}}
We will prove that Proposition \ref{prop:LMO_inR} is a special case of Proposition \ref{prop:solution_inR}. Specifically, in the original objective function (\ref{eq:obj}), let $f_{\xi}(x)\equiv \langle \langle x,g\rangle$ that does not depend on the stochastic sample $\xi$, which is $L_{\xi}$-Lipschitz where $L_{\xi}\equiv G$ and thus satisfies Assumption \ref{assum:f}. Therefore, by applying Proposition \ref{prop:LMO_inR}, we conclude that ${\arg\min}_{y\in\mathbb{R}^d}[h(y)+\langle y,g\rangle]={\arg\min}_{y\in\mathbb{R}^d} \phi(x)$ is a non-empty subset of $\mathcal{B}_d(x^{(h)},R)$. 

\section{Proof of Proposition \ref{prop:CGGSP}}
\textbf{Proof of Item 1:} Item 1 is directly implied by Definition \ref{def:CGGSP} and $\partial F(x)\subset\partial_{\delta} F(x)$. 

\noindent\textbf{Proof of Item 2:} If $x\in\mathbb{R}^d$ is an $(\epsilon/(2RL),\epsilon/2)$-CGGSP, then based on Definition \ref{def:CGGSP}, there exists $g\in\partial_{\delta}F(x)$ such that 
\begin{align}
\max_{y\in\mathbb{R}^d}\big[h(x)-h(y)+\langle y-x,-g\rangle\big]\le\frac{\epsilon}{2}. \label{eq:x_CGGSP2}
\end{align}
As $F$ is differentiable, $g$ can be written as the following convex combination of gradients
\begin{align}
g=\sum_{k=1}^n \alpha_k \nabla F(x_k),\nonumber
\end{align}
where $\alpha_k\ge 0$, $\sum_{k=1}^n \alpha_k=1$ and $x_k\in\mathbb{R}^d$ satisfies $\|x_k-x\|\le\frac{\epsilon}{2RL}$. Then we can prove Item 2 as follows.
\begin{align}
\mathcal{W}_h[x,\nabla F(x)]\overset{(a)}{=}&h(x)-h(y)+\langle y-x,-\nabla F(x)\rangle\nonumber\\
=&h(x)-h(y)+\langle y-x,-g\rangle+\langle y-x,g-\nabla F(x)\rangle\nonumber\\
\le&h(x)-h(y)+\langle y-x,-g\rangle+\|y-x\|\cdot\|g-\nabla F(x)\|\nonumber\\
\overset{(b)}{\le}&\frac{\epsilon}{2}+R\Big\|\sum_{k=1}^n \alpha_k[\nabla F(x_k)-\nabla F(x)]\Big\|\nonumber\\
\le&\frac{\epsilon}{2}+R\sum_{k=1}^n \alpha_k\|\nabla F(x_k)-\nabla F(x)\|\nonumber\\
\overset{(c)}{\le}&\frac{\epsilon}{2}+R\sum_{k=1}^n \alpha_kL\|x_k-x\|\nonumber\\
\overset{(d)}{\le}&\frac{\epsilon}{2}+\frac{\epsilon}{2}=\epsilon,\nonumber
\end{align}
where (a) holds for any $y\in\mathcal{L}_h[x,\nabla F(x)]$ based on Eqs. (\ref{eq:FWgap}) and (\ref{eq:LMO}), (b) uses Eq. (\ref{eq:x_CGGSP2}), Proposition \ref{prop:LMO_inR} and $\sum_{k=1}^n \alpha_k=1$, (c) uses the $L$-Lipschitz continuity of $\nabla F$, and (d) uses $\|x_k-x\|\le\frac{\epsilon}{2RL}$ and $\sum_{k=1}^n \alpha_k=1$. 

\noindent\textbf{Proof of Item 3:} Item 3 is directly implied by Definition \ref{def:CGGSP} and item 3 of Lemma \ref{lemma:Fdelta}.

\section{Proof of Theorem \ref{thm:hardness}}
The proof of Theorem 4.7 in \citep{liu2024zeroth} has designed a function $F$\footnote{This function is denoted as $F_w$ in \citep{liu2024zeroth}.} that satisfies Assumption \ref{assum:f} that $F$ is Lipschitz continuous, and satisfies $F(0)-\inf_{x\in\Omega}F(x)\le 2$ where $\Omega=[-100,100]^d$ is a convex and compact set. The conclusion of their Theorem 4.7 states each point $x_t$ generated from the algorithm $\mathcal{A}$ satisfies the following inequalities for $\gamma\in(0,0.1]$. 
\begin{align}
&\min_{g\in\partial F(x_t)}\Big[\frac{1}{\gamma}\|x_t-\psi(x_t,g,\gamma)\|\Big]\ge \frac{1}{4\sqrt{2}},\label{eq:proj_big}\\
&\min_{g\in\partial F(x_t)}\max_{u\in\Omega}\langle u-x_t,-g\rangle\ge \frac{1}{4\sqrt{2}},\label{eq:FW_big}
\end{align}
where
\begin{align}
\psi(x,g,\gamma)={\arg\min}_{y\in\Omega}\Big(\langle g,y\rangle+\frac{1}{2\gamma}\|y-x\|^2\Big).\label{eq:proj}
\end{align}
Select $h=h_{\Omega}$ defined by Eq. (\ref{eq:h_constraint}), which yields the following equations.
\begin{align}
{\rm prox}_{\gamma h}(x_t-\gamma g)\overset{(a)}{=}&{\arg\min}_{y\in\mathbb{R}^n} \Big[ h_{\Omega}(y)+\frac{1}{2\gamma}\|y-x_t+\gamma g\|^2\Big]\nonumber\\
\overset{(b)}{=}&{\arg\min}_{y\in\Omega} \Big[\frac{1}{2\gamma}\|y-x_t\|^2+\langle g,y-x_t\rangle \Big] \overset{(c)}{=}\psi(x_t,g,\gamma),\label{eq:prox2proj}
\end{align}
\begin{align}
\min_{g\in\partial F(x_t)}\mathcal{W}_h(x_t,g)\overset{(d)}{=}&\min_{g\in\partial F(x_t)}\max_{y\in\mathbb{R}^d}\big[h_{\Omega}(x_t)-h_{\Omega}(y)+\langle y-x_t,-g\rangle\big]\nonumber\\
\overset{(b)}{\ge}&\min_{g\in\partial F(x_t)}\max_{y\in\Omega}\langle y-x_t,-g\rangle\overset{\rm (e)}{\ge}\frac{1}{4\sqrt{2}},\label{eq:Wh_big}
\end{align}
\begin{align}
\phi(0)-\inf_{x\in\mathbb{R}^d}\phi(x)=F(0)+h_{\Omega}(0)-\inf_{x\in\mathbb{R}^d}[F(x)+h_{\Omega}(x)]\overset{(b)}{=}F(0)-\inf_{x\in\Omega}F(x)\le 2,\label{eq:phi_init}
\end{align}
where (a)-(e) use Eqs. (\ref{eq:prox_map}), (\ref{eq:h_constraint}), (\ref{eq:proj}), (\ref{eq:FWgap}) and (\ref{eq:FW_big}) respectively. Therefore, 
\begin{align}
\min_{g\in\partial F(x_t)}\|\mathcal{G}_{\gamma h}(x_t,g)\|\overset{(a)}{=}&\min_{g\in\partial F(x_t)}\Big[\frac{1}{\gamma}\|x_t-{\rm prox}_{\gamma h}(x_t-\gamma g)\|\Big]\nonumber\\
\overset{(b)}{=}&\min_{g\in\partial F(x_t)}\Big[\frac{1}{\gamma}\|x_t-\psi(x_t,g,\gamma)\|\Big]\overset{(c)}{\ge} \frac{1}{4\sqrt{2}},\label{eq:Gh_big}
\end{align}
where (a) uses Eq. (\ref{eq:prox_grad}), (b) uses Eq. (\ref{eq:prox2proj}) and (c) uses Eq. (\ref{eq:proj_big}). 

Eq. (\ref{eq:Gh_big}) implies that $x_t$ is not a $(\gamma,\epsilon)$-PGSP for $\epsilon\le\frac{1}{4\sqrt{2}}$. Eq. (\ref{eq:Wh_big}) implies that $x_t$ is not an $\epsilon$-CGGSP for $\epsilon\le\frac{1}{4\sqrt{2}}$. These implications along with Eq. (\ref{eq:phi_init}) conclude the proof.

\section{Proof of Theorem \ref{thm:prox_batch}}\label{sec:prox_batch}
Since $\nabla F_{\delta}$ is $\frac{cG\sqrt{d}}{\delta}$-Lipschitz continuous based on item 2 of Lemma \ref{lemma:Fdelta}, we obtain that
\begin{align}
F_{\delta}(x_{t+1})\le& F_{\delta}(x_t)+\langle \nabla F_{\delta}(x_t),x_{t+1}-x_t\rangle+\frac{cG\sqrt{d}}{2\delta}\|x_{t+1}-x_t\|^2\nonumber\\
\overset{(a)}{=}&F_{\delta}(x_t)-\gamma\langle \nabla F_{\delta}(x_t),\mathcal{G}_{\gamma h}(x_t,g_t)\rangle+\frac{cG\gamma^2\sqrt{d}}{2\delta}\|\mathcal{G}_{\gamma h}(x_t,g_t)\|^2\nonumber\\
=&F_{\delta}(x_t)-\gamma\langle g_t,\mathcal{G}_{\gamma h}(x_t,g_t)\rangle +\gamma\langle g_t-\nabla F_{\delta}(x_t),\mathcal{G}_{\gamma h}(x_t,g_t)\rangle +\frac{cG\gamma^2\sqrt{d}}{2\delta}\|\mathcal{G}_{\gamma h}(x_t,g_t)\|^2\nonumber\\
\overset{(b)}{\le}&F_{\delta}(x_t)-\gamma\Big[\|\mathcal{G}_{\gamma h}(x_t,g_t)\|^2+\frac{1}{\gamma}\big[h[{\rm prox}_{\gamma h}(x_t-\gamma g_t)]-h(x_t)\big]\Big]\nonumber\\
&+\gamma\langle g_t-\nabla F_{\delta}(x_t),\mathcal{G}_{\gamma h}(x_t,g_t)\rangle +\frac{\gamma}{2}\|\mathcal{G}_{\gamma h}(x_t,g_t)\|^2\nonumber\\
\overset{(c)}{=}&F_{\delta}(x_t)-\frac{\gamma}{2}\|\mathcal{G}_{\gamma h}(x_t,g_t)\|^2+h(x_t)-h(x_{t+1})+\gamma\langle g_t-\nabla F_{\delta}(x_t),\mathcal{G}_{\gamma h}[x_t,\nabla F_{\delta}(x_t)]\rangle \nonumber\\
&+\gamma\langle g_t-\nabla F_{\delta}(x_t),\mathcal{G}_{\gamma h}(x_t,g_t)-\mathcal{G}_{\gamma h}[x_t,\nabla F_{\delta}(x_t)]\rangle\nonumber\\
\le&F_{\delta}(x_t)-\frac{\gamma}{2}\|\mathcal{G}_{\gamma h}(x_t,g_t)\|^2+h(x_t)-h(x_{t+1})+\gamma\langle g_t-\nabla F_{\delta}(x_t),\mathcal{G}_{\gamma h}[x_t,\nabla F_{\delta}(x_t)]\rangle \nonumber\\
&+\gamma\|g_t-\nabla F_{\delta}(x_t)\|\cdot\big\|\mathcal{G}_{\gamma h}(x_t,g_t)-\mathcal{G}_{\gamma h}[x_t,\nabla F_{\delta}(x_t)]\big\|\nonumber\\
\overset{(d)}{\le}& F_{\delta}(x_t)-\frac{\gamma}{2}\|\mathcal{G}_{\gamma h}(x_t,g_t)\|^2+h(x_t)-h(x_{t+1})+\gamma\langle g_t-\nabla F_{\delta}(x_t),\mathcal{G}_{\gamma h}[x_t,\nabla F_{\delta}(x_t)]\rangle \nonumber\\
&+\gamma\|g_t-\nabla F_{\delta}(x_t)\|^2,\nonumber
\end{align}
where (a) and (c) use the update rule (\ref{eq:prox_next}) that $x_{t+1}={\rm prox}_{\gamma h}(x_t-\gamma g_t)=x_t-\gamma \mathcal{G}_{\gamma h}(x_t,g_t)$, (b) uses Lemma \ref{lemma:prox_inprod} and the stepsize $\gamma=\frac{\delta}{cG\sqrt{d}}$, and (d) uses Lemma \ref{lemma:prox_contraction}. Rearranging the inequality above, and taking expectation, we obtain that
\begin{align}
\frac{\gamma}{2}\mathbb{E}[\|\mathcal{G}_{\gamma h}(x_t,g_t)\|^2]\le& \mathbb{E}[F_{\delta}(x_t)+h(x_t)-F_{\delta}(x_{t+1})-h(x_{t+1})]\nonumber\\
&+\gamma\mathbb{E}\big[\langle g_t-\nabla F_{\delta}(x_t),\mathcal{G}_{\gamma h}[x_t,\nabla F_{\delta}(x_t)]\rangle\big] +\gamma\mathbb{E}[\|g_t-\nabla F_{\delta}(x_t)\|^2]\nonumber\\
\le& \mathbb{E}[F_{\delta}(x_t)+h(x_t)-F_{\delta}(x_{t+1})-h(x_{t+1})]+\frac{16\gamma\sqrt{2\pi}dG^2}{B_t},\nonumber
\end{align}
where the second $\le$ uses Lemma \ref{lemma:gtavg_err}. Rearranging the inequality above and summing over $t=0,1,\ldots,T-1$, we have 
\begin{align}
\mathbb{E}[\|\mathcal{G}_{\gamma h}(x_{\widetilde{T}},g_{\widetilde{T}})\|^2]=&\frac{1}{T}\sum_{t=0}^{T-1}\mathbb{E}[\|\mathcal{G}_{\gamma h}(x_t,g_t)\|^2]\nonumber\\
\le&\frac{1}{T}\sum_{t=0}^{T-1}\Big[\frac{2}{\gamma}\mathbb{E}[F_{\delta}(x_t)+h(x_t)-F_{\delta}(x_{t+1})-h(x_{t+1})]+\frac{32\sqrt{2\pi}dG^2}{B_t}\Big]\nonumber\\
\overset{(a)}{\le}&\frac{2}{T\gamma}\mathbb{E}[F_{\delta}(x_0)+h(x_0)-F_{\delta}(x_T)-h(x_T)]+\frac{32\sqrt{2\pi}dG^2}{B}\nonumber\\
\overset{(b)}{\le}&\frac{2cG\sqrt{d}}{T\delta}\mathbb{E}[F(x_0)+h(x_0)-F(x_T)-h(x_T)+2\delta G]+\frac{32\sqrt{2\pi}dG^2}{B}\nonumber\\
\overset{(c)}{\le}&\frac{2cG\sqrt{d}}{T\delta}\mathbb{E}[\phi(x_0)-\phi_{\min}+2\delta G]+\frac{32\sqrt{2\pi}dG^2}{B},\label{eq:prerate_G1X1}
\end{align}
where (a) uses constant batchsize $B_t\equiv B$ and stepsize $\gamma=\frac{\delta}{cG\sqrt{d}}$, (b) uses item 1 of Lemma \ref{lemma:Fdelta}, (c) uses $\phi\overset{\rm def}{=}F+g$ and $\phi_{\min}\overset{\rm def}{=}\min_{x\in\mathbb{R}^d}\phi(x)$. Therefore, we can obtain the convergence rate (\ref{eq:rate_G1X1}) as follows.
\begin{align}
\mathbb{E}[\|\mathcal{G}_{\gamma h}(x_{\widetilde{T}},\nabla F_{\delta}(x_{\widetilde{T}}))\|]\le& \mathbb{E}[\|\mathcal{G}_{\gamma h}(x_{\widetilde{T}},g_{\widetilde{T}})\|]+\mathbb{E}[\|\mathcal{G}_{\gamma h}(x_{\widetilde{T}},\nabla F_{\delta}(x_{\widetilde{T}}))-\mathcal{G}_{\gamma h}(x_{\widetilde{T}},g_{\widetilde{T}})\|] \nonumber\\
\overset{(a)}{\le}& \sqrt{\mathbb{E}[\|\mathcal{G}_{\gamma h}(x_{\widetilde{T}},g_{\widetilde{T}})\|^2]}+\sqrt{\mathbb{E}[\|\nabla F_{\delta}(x_{\widetilde{T}})-g_{\widetilde{T}}\|^2]}\nonumber\\
\overset{(b)}{\le}& \sqrt{\frac{2cG\sqrt{d}}{T\delta}\mathbb{E}[\phi(x_0)-\phi_{\min}+2\delta G]+\frac{32\sqrt{2\pi}dG^2}{B}}+\sqrt{\frac{16\sqrt{2\pi}dG^2}{B}}\nonumber\\
\overset{(c)}{\le}& \frac{\sqrt{2cG}d^{1/4}}{\sqrt{T\delta}}\sqrt{\mathbb{E}[\phi(x_0)]-\phi_{\min}+2\delta G}+\frac{16G\sqrt{d}}{\sqrt{B}},\nonumber
\end{align} 
where (a) uses Lemma \ref{lemma:prox_contraction}, (b) uses Eq. (\ref{eq:prerate_G1X1}) and Lemma \ref{lemma:gtavg_err}, (c) uses $\sqrt{a+b}\le \sqrt{a}+\sqrt{b}$ for any $a,b\ge 0$ and $\sqrt{32\sqrt{2\pi}}+\sqrt{16\sqrt{2\pi}}<16$

Furthermore, we can select the following hyperparameters. 
\begin{align}
T=&\frac{8cG\sqrt{d}}{\delta\epsilon^2}\big[\mathbb{E}[\phi(x_0)]-\phi_{\min}+2\delta G\big]=\mathcal{O}(Gd^{1/2}\delta^{-1}\epsilon^{-2}),\label{eq:T_G1X1}\\
B=&\frac{1024dG^2}{\epsilon^2}=\mathcal{O}(G^2d\epsilon^{-2}).\label{eq:B_G1X1}
\end{align}
Then substituting the hyperparameters above into the convergence rate (\ref{eq:rate_G1X1}), we obtain the following bound, which based on item 3 of Proposition \ref{prop:PGSP} implies that there exists at least one $(\gamma,\delta,\epsilon)$-PGSP in $\{x_t\}_{t=0}^{T-1}$. 
\begin{align}
\min_{0\le t\le T-1}\mathbb{E}[\|\mathcal{G}_{\gamma h}(x_t,\nabla F_{\delta}(x_t))\|]\le \mathbb{E}[\|\mathcal{G}_{\gamma h}(x_{\widetilde{T}},\nabla F_{\delta}(x_{\widetilde{T}}))\|]\le \epsilon.\nonumber
\end{align}

\section{Proof of Theorem \ref{thm:prox_VR}}\label{sec:prox_VR}
Since $\nabla F_{\delta}$ is $\frac{cG\sqrt{d}}{\delta}$-Lipschitz continuous based on item 2 of Lemma \ref{lemma:Fdelta}, we obtain that
\begin{align}
F_{\delta}(x_{t+1})\le& F_{\delta}(x_t)+\langle \nabla F_{\delta}(x_t),x_{t+1}-x_t\rangle+\frac{cG\sqrt{d}}{2\delta}\|x_{t+1}-x_t\|^2\nonumber\\
\overset{(a)}{=}&F_{\delta}(x_t)-\gamma\langle \nabla F_{\delta}(x_t),\mathcal{G}_{\gamma h}(x_t,g_t)\rangle+\frac{cG\gamma^2\sqrt{d}}{2\delta}\|\mathcal{G}_{\gamma h}(x_t,g_t)\|^2\nonumber\\
=&F_{\delta}(x_t)-\gamma\langle g_t,\mathcal{G}_{\gamma h}(x_t,g_t)\rangle +\gamma\langle g_t-\nabla F_{\delta}(x_t),\mathcal{G}_{\gamma h}(x_t,g_t)\rangle +\frac{cG\gamma^2\sqrt{d}}{2\delta}\|\mathcal{G}_{\gamma h}(x_t,g_t)\|^2\nonumber\\
\overset{(b)}{\le}&F_{\delta}(x_t)-\gamma\Big[\|\mathcal{G}_{\gamma h}(x_t,g_t)\|^2+\frac{1}{\gamma}\big[h[{\rm prox}_{\gamma h}(x_t-\gamma g_t)]-h(x_t)\big]\Big]\nonumber\\
&+\frac{\gamma}{2}\|g_t-\nabla F_{\delta}(x_t)\|^2 +\Big(\frac{cG\gamma^2\sqrt{d}}{2\delta}+\frac{\gamma}{2}\Big)\|\mathcal{G}_{\gamma h}(x_t,g_t)\|^2\nonumber\\
\overset{(c)}{=}&F_{\delta}(x_t)+h(x_t)-h(x_{t+1})+\frac{\gamma}{2}\|g_t-\nabla F_{\delta}(x_t)\|^2+\Big(\frac{cG\gamma^2\sqrt{d}}{2\delta}-\frac{\gamma}{2}\Big)\|\mathcal{G}_{\gamma h}(x_t,g_t)\|^2,\nonumber
\end{align}
where (a) and (c) use the update rule (\ref{eq:prox_next}) that $x_{t+1}={\rm prox}_{\gamma h}(x_t-\gamma g_t)=x_t-\gamma \mathcal{G}_{\gamma h}(x_t,g_t)$, and (b) uses Lemma \ref{lemma:prox_inprod} and the inequality that $\langle u,v\rangle\le (\|u\|^2+\|v\|^2)/2$ for $u=g_t-\nabla F_{\delta}(x_t)$, $v=\mathcal{G}_{\gamma h}(x_t,g_t)$. Rearranging the inequality above, and taking expectation, we obtain that 
\begin{align}
&\Big(\frac{\gamma}{2}-\frac{cG\gamma^2\sqrt{d}}{2\delta}\Big)\mathbb{E}[\|\mathcal{G}_{\gamma h}(x_t,g_t)\|^2]\nonumber\\
\le&\mathbb{E}[F_{\delta}(x_t)+h(x_t)-F_{\delta}(x_{t+1})-h(x_{t+1})]+\frac{\gamma}{2}\mathbb{E}[\|g_t-\nabla F_{\delta}(x_t)\|^2]\nonumber\\
\overset{(a)}{\le}&\mathbb{E}[F_{\delta}(x_t)+h(x_t)-F_{\delta}(x_{t+1})-h(x_{t+1})]+\frac{\gamma d^2 G^2}{2\delta^2 B_1} \sum_{j=n_t q+1}^t\mathbb{E}[\|x_j-x_{j-1}\|^2]+\frac{8\gamma\sqrt{2\pi}dG^2}{B_0},\nonumber
\end{align}
where (a) uses Lemma \ref{lemma:gtVR_err}. Summing the inequality above over $t=0,1,\ldots,T-1$, we have 
\begin{align}
&\Big(\frac{\gamma}{2}-\frac{cG\gamma^2\sqrt{d}}{2\delta}\Big)\sum_{t=0}^{T-1}\mathbb{E}[\|\mathcal{G}_{\gamma h}(x_t,g_t)\|^2]\nonumber\\
\le& \mathbb{E}[F_{\delta}(x_0)+h(x_0)-F_{\delta}(x_T)-h(x_T)]+\frac{\gamma d^2 G^2}{2\delta^2 B_1} \sum_{t=0}^{T-1} \sum_{j=n_t q+1}^t\|x_j-x_{j-1}\|^2+\frac{8T\gamma\sqrt{2\pi}dG^2}{B_0}\nonumber\\
\overset{(a)}{\le}& \mathbb{E}[F(x_0)+h(x_0)-F(x_T)-h(x_T)]+2\delta G\nonumber\\
&+\frac{\gamma^3 d^2 G^2}{2\delta^2 B_1} \sum_{t=0}^{T-1} \sum_{j=n_t q+1}^{(n_t+1)q}\|\mathcal{G}_{\gamma h}(x_j,g_j)\|^2+\frac{8T\gamma\sqrt{2\pi}dG^2}{B_0}\nonumber\\
\overset{(b)}{\le}& \mathbb{E}[\phi(x_0)-\phi_{\min}]+2\delta G+\frac{q\gamma^3 d^2 G^2}{2\delta^2 B_1} \sum_{t=0}^{T-1}\mathbb{E}[\|\mathcal{G}_{\gamma h}(x_t,g_t)\|^2]+\frac{8T\gamma\sqrt{2\pi}dG^2}{B_0},
\end{align}
where (a) uses $t<(n_t+1)q$ ($n_t=\lfloor t/q\rfloor$), item 1 of Lemma \ref{lemma:Fdelta} and the update rule (\ref{eq:prox_next}) that $x_{j+1}={\rm prox}_{\gamma h}(x_j-\gamma g_j)=x_j-\gamma \mathcal{G}_{\gamma h}(x_j,g_j)$, and (b) uses $\phi\overset{\rm def}{=}F+g$ and $\phi_{\min}\overset{\rm def}{=}\min_{x\in\mathbb{R}^d}\phi(x)$. Rearranging the inequality above, we obtain that
\begin{align}
&\mathbb{E}[\|\mathcal{G}_{\gamma h}(x_{\widetilde{T}},g_{\widetilde{T}})\|^2]\nonumber\\
=&\frac{1}{T}\sum_{t=0}^{T-1}\mathbb{E}[\|\mathcal{G}_{\gamma h}(x_t,g_t)\|^2]\nonumber\\
\le&\frac{1}{T}\Big(\frac{\gamma}{2}-\frac{cG\gamma^2\sqrt{d}}{2\delta}-\frac{q\gamma^3 d^2 G^2}{2\delta^2B_1}\Big)^{-1}\Big[\mathbb{E}[\phi(x_0)]-\phi_{\min}+2\delta G+\frac{8T\gamma\sqrt{2\pi}dG^2}{B_0}\Big]\nonumber\\
\le&\frac{16G(d+c\sqrt{d})}{T\delta}\Big[\mathbb{E}[\phi(x_0)]-\phi_{\min}+2\delta G\Big]+\frac{64\sqrt{2\pi}dG^2}{B_0}\label{eq:prerate_G2X1}
\end{align}
where (a) uses the following inequality with stepsize $\gamma=\frac{\delta}{2G(d+c\sqrt{d})}$ and batchsize $B_1=q$, and (b) uses stepsize $\gamma=\frac{\delta}{2G(d+c\sqrt{d})}$. 
\begin{align}
\frac{\gamma}{2}-\frac{cG\gamma^2\sqrt{d}}{2\delta}-\frac{q\gamma^3 d^2 G^2}{2\delta^2B_1}=&\frac{\gamma}{2}\Big(1-\frac{cG\gamma\sqrt{d}}{\delta}-\frac{\gamma^2 d^2 G^2}{\delta^2}\Big)\nonumber\\
\ge&\frac{\delta}{4G(d+c\sqrt{d})}\Big(1-\frac{c\sqrt{d}}{2(d+c\sqrt{d})}-\frac{d^2}{4(d+c\sqrt{d})^2}\Big)\nonumber\\
\ge&\frac{\delta}{4G(d+c\sqrt{d})}\Big(1-\frac{1}{2}-\frac{1}{4}\Big)=\frac{\delta}{16G(d+c\sqrt{d})}.\nonumber
\end{align}
Then we have the following bound. 
\begin{align}
&\mathbb{E}[\|\mathcal{G}_{\gamma h}(x_{\widetilde{T}},\nabla F_{\delta}(x_{\widetilde{T}}))\|^2]\nonumber\\
\le& 2\mathbb{E}[\|\mathcal{G}_{\gamma h}(x_{\widetilde{T}},g_{\widetilde{T}})\|^2]+2\mathbb{E}[\|\mathcal{G}_{\gamma h}(x_{\widetilde{T}},\nabla F_{\delta}(x_{\widetilde{T}}))-\mathcal{G}_{\gamma h}(x_{\widetilde{T}},g_{\widetilde{T}})\|^2] \nonumber\\
\overset{(a)}{\le}& 2\mathbb{E}[\|\mathcal{G}_{\gamma h}(x_{\widetilde{T}},g_{\widetilde{T}})\|^2]+2\mathbb{E}[\|\nabla F_{\delta}(x_{\widetilde{T}})-g_{\widetilde{T}}\|^2]\nonumber\\
=&\frac{2}{T}\sum_{t=0}^{T-1}\big[\mathbb{E}[\|\mathcal{G}_{\gamma h}(x_t,g_t)\|^2]+\mathbb{E}[\|\nabla F_{\delta}(x_t)-g_t\|^2]\big]\nonumber\\
\overset{(b)}{\le}& \frac{2}{T}\sum_{t=0}^{T-1}\Big[\mathbb{E}[\|\mathcal{G}_{\gamma h}(x_t,g_t)\|^2]+\frac{d^2 G^2}{\delta^2 B_1} \sum_{j=n_t q+1}^t\|x_j-x_{j-1}\|^2+\frac{16 \sqrt{2 \pi} d G^2}{B_0}\Big]\nonumber\\
\overset{(c)}{\le}&\frac{2}{T}\sum_{t=0}^{T-1}\mathbb{E}[\|\mathcal{G}_{\gamma h}(x_t,g_t)\|^2]+\frac{2d^2 G^2\gamma^2}{T\delta^2 q} \sum_{t=0}^{T-1}\sum_{j=n_t q+1}^{(n_t+1)q}\mathbb{E}[\|\mathcal{G}_{\gamma h}(x_j,g_j)\|^2]\nonumber\\
\le&\Big(\frac{2}{T}+\frac{2d^2 G^2\gamma^2}{T\delta^2}\Big)\sum_{t=0}^{T-1}\mathbb{E}[\|\mathcal{G}_{\gamma h}(x_t,g_t)\|^2]\nonumber\\
\overset{(d)}{\le}&\Big(2+\frac{d^2}{2(d+c\sqrt{d})^2}\Big)\Big\{\frac{16G(d+c\sqrt{d})}{T\delta}\Big[\mathbb{E}[\phi(x_0)]-\phi_{\min}+2\delta G\Big]+\frac{64\sqrt{2\pi}dG^2}{B_0}\Big\}\nonumber\\
\le&\frac{40G(d+c\sqrt{d})}{T\delta}\Big[\mathbb{E}[\phi(x_0)]-\phi_{\min}+2\delta G\Big]+\frac{160\sqrt{2\pi}dG^2}{B_0},\nonumber
\end{align} 
where (a) uses Lemma \ref{lemma:prox_contraction}, (b) uses Lemma \ref{lemma:gtVR_err}, (c) uses $t<(n_t+1)q$, $B_1=q$ and the update rule that $x_j=x_{j-1}+\gamma\mathcal{G}_{\gamma h}(x_j,g_j)$, and (d) uses Eq. (\ref{eq:prerate_G2X1}) and $\gamma=\frac{\delta}{2G(d+c\sqrt{d})}$. Therefore, by taking square root of the inequality above, we obtain the convergence rate (\ref{eq:rate_G2X1}) as follows.
\begin{align}
\mathbb{E}[\|\mathcal{G}_{\gamma h}(x_{\widetilde{T}},\nabla F_{\delta}(x_{\widetilde{T}}))\|]\le&\sqrt{\mathbb{E}[\|\mathcal{G}_{\gamma h}(x_{\widetilde{T}},\nabla F_{\delta}(x_{\widetilde{T}}))\|^2]}\nonumber\\
\le&\sqrt{\frac{40G(d+c\sqrt{d})}{T\delta}\Big[\mathbb{E}[\phi(x_0)]-\phi_{\min}+2\delta G\Big]+\frac{160\sqrt{2\pi}dG^2}{B_0}}\nonumber\\
\le&\frac{\sqrt{40G}(\sqrt{d}+\sqrt{c}d^{1/4})}{\sqrt{T\delta}}\sqrt{\mathbb{E}[\phi(x_0)]-\phi_{\min}+2\delta G}+\frac{21G\sqrt{d}}{\sqrt{B_0}},\nonumber
\end{align}
where the final $\le$ uses $\sqrt{160\sqrt{2\pi}}<21$ and $\sqrt{a+b}\le \sqrt{a}+\sqrt{b}$ for any $a,b\ge 0$. 

Furthermore, we can select $B_0=1764dG^2\epsilon^{-2}$, $B_1=q=42\sqrt{d}G\epsilon^{-1}$ and the following $T$
\begin{align}
T=320\delta^{-1}\epsilon^{-2}G(d+c\sqrt{d})\big[\mathbb{E}[\phi(x_0)]-\phi_{\min}+2\delta G\big]=\mathcal{O}(Gd\delta^{-1}\epsilon^{-2}).\label{eq:T_G2X1}
\end{align}
Then substituting the hyperparameters above into the convergence rate (\ref{eq:rate_G2X1}), we obtain the following bound, which based on item 3 of Proposition \ref{prop:PGSP} implies that there exists at least one $(\gamma,\delta,\epsilon)$-PGSP in $\{x_t\}_{t=0}^{T-1}$. 
\begin{align}
\min_{0\le t\le T-1}\mathbb{E}[\|\mathcal{G}_{\gamma h}(x_t,\nabla F_{\delta}(x_t))\|]\le \mathbb{E}[\|\mathcal{G}_{\gamma h}(x_{\widetilde{T}},\nabla F_{\delta}(x_{\widetilde{T}}))\|]\le \epsilon.\nonumber
\end{align}

\section{Proof of Theorem \ref{thm:GCG_batch}}\label{sec:GCG_batch} 
Denote $\widetilde{y}_t\in\mathcal{L}_h[\nabla F_{\delta}(x_t)]\overset{\rm def}{=}{\arg\min}_{y\in\mathbb{R}^d} [h(y)+\langle y,\nabla F_{\delta}(x_t)\rangle]$. Then the $\delta$-regularized Frank-Wolfe gap (\ref{eq:FWgap}) can be expressed as follows.
\begin{align}
\mathcal{W}_h[x_t,\nabla F_{\delta}(x_t)]\overset{\rm def}{=}&\max_{y\in\mathbb{R}^d}\big[h(x_t)-h(y)+\langle y-x_t,-\nabla F_{\delta}(x_t)\rangle\big]\nonumber\\
=&h(x_t)-h(\widetilde{y}_t)-\langle \widetilde{y}_t-x_t,\nabla F_{\delta}(x_t)\rangle.\label{eq:FWgap_X2}
\end{align}
Then we have
\begin{align}
\langle \nabla F_{\delta}(x_t),y_t-x_t\rangle=&\langle \nabla F_{\delta}(x_t),\widetilde{y}_t-x_t\rangle+\langle \nabla F_{\delta}(x_t),y_t-\widetilde{y}_t\rangle\nonumber\\
=&\langle \nabla F_{\delta}(x_t),\widetilde{y}_t-x_t\rangle+\langle g_t,y_t-\widetilde{y}_t\rangle+\langle \nabla F_{\delta}(x_t)-g_t,y_t-\widetilde{y}_t\rangle\nonumber\\
\overset{(a)}{\le}&\langle \nabla F_{\delta}(x_t),\widetilde{y}_t-x_t\rangle+h(\widetilde{y}_t)-h(y_t)+\|\nabla F_{\delta}(x_t)-g_t\|\|y_t-\widetilde{y}_t\|\nonumber\\
\overset{(b)}{\le}&-\mathcal{W}_h[x_t,\nabla F_{\delta}(x_t)]+h(x_t)-h(y_t)+2R\|\nabla F_{\delta}(x_t)-g_t\|\label{eq:inprod_le_X2}
\end{align}
where (a) uses $h(y_t)+\langle y_t,g_t\rangle \le h(\widetilde{y}_t)+\langle\widetilde{y}_t,g_t\rangle$ based on the update rule (\ref{eq:GCG_map}), and (b) uses Eq. (\ref{eq:FWgap_X2}) as well as $\|y_t-x^{(h)}\|\le R$ and $\|\widetilde{y}_t-x^{(h)}\|\le R$ (based on Proposition \ref{prop:LMO_inR} and item 2 of Lemma \ref{lemma:Fdelta}). 

Then since $\nabla F_{\delta}$ is $\frac{cG\sqrt{d}}{\delta}$-Lipschitz continuous based on item 2 of Lemma \ref{lemma:Fdelta}, we obtain that
\begin{align}
&F_{\delta}(x_{t+1})\nonumber\\
\le& F_{\delta}(x_t)+\langle \nabla F_{\delta}(x_t),x_{t+1}-x_t\rangle+\frac{cG\sqrt{d}}{2\delta}\|x_{t+1}-x_t\|^2\nonumber\\
\overset{(a)}{=}&F_{\delta}(x_t)+\gamma\langle \nabla F_{\delta}(x_t),y_t-x_t\rangle+\frac{cG\sqrt{d}\gamma^2}{2\delta}\|y_t-x_t\|^2\nonumber\\
\overset{(b)}{\le}&F_{\delta}(x_t)-\gamma\mathcal{W}_h[x_t,\nabla F_{\delta}(x_t)]+\gamma h(x_t)-\gamma h(y_t)+2R\gamma\|\nabla F_{\delta}(x_t)-g_t\|+\frac{cG\sqrt{d}(2R)^2\gamma^2}{2\delta}\nonumber\\
\overset{(c)}{\le}&F_{\delta}(x_t)-\gamma\mathcal{W}_h[x_t,\nabla F_{\delta}(x_t)]+h(x_t)-h(x_{t+1})+2R\gamma\|\nabla F_{\delta}(x_t)-g_t\|+\frac{2cG\sqrt{d}R^2\gamma^2}{\delta},\nonumber
\end{align}
where (a) uses the update rule (\ref{eq:GCG_next}), (b) uses Eq. (\ref{eq:inprod_le_X2}) and Lemma \ref{lemma:GCG_inR}, and (c) uses $h(x_{t+1})\le (1-\gamma)h(x_t)+\gamma h(y_t)$ which holds for convex function $h$ and $x_{t+1}$ obtained from the update rule (\ref{eq:GCG_next}). 
Rearranging the inequality above and averaging it over $t=0,1,\ldots,T-1$, we obtain the convergence rate (\ref{eq:rate_G1X2}) as follows. 
\begin{align}
&\mathbb{E}\big[\mathcal{W}_h[x_{\widetilde{T}},\nabla F_{\delta}(x_{\widetilde{T}})]\big]\nonumber\\
=&\frac{1}{T}\sum_{t=0}^{T-1}\mathcal{W}_h[x_t,\nabla F_{\delta}(x_t)]\nonumber\\
\le&\frac{1}{T\gamma}\mathbb{E}[F_{\delta}(x_0)+h(x_0)-F_{\delta}(x_T)-h(x_T)]+\frac{2R}{T}\sum_{t=0}^{T-1}\mathbb{E}\big[\|\nabla F_{\delta}(x_t)-g_t\|\big]+\frac{2cG\sqrt{d}R^2\gamma}{\delta}\nonumber\\
\overset{(a)}{\le}&\frac{1}{T\gamma}\mathbb{E}[F(x_0)+h(x_0)-F(x_T)-h(x_T)+2\delta G]+\frac{2R}{T}\sum_{t=0}^{T-1}\sqrt{\mathbb{E}\big[\|\nabla F_{\delta}(x_t)-g_t\|^2\big]}\nonumber\\
&+\frac{2cG\sqrt{d}R^2\gamma}{\delta}\label{prerate:GCG}\\
\overset{(b)}{\le}&\frac{1}{T\gamma}\mathbb{E}[\phi(x_0)-\phi_{\min}+2\delta G]+\frac{2R}{T}\sum_{t=0}^{T-1}\sqrt{\frac{16\sqrt{2\pi}dG^2}{B_t}}+\frac{cG\sqrt{d}R^2\gamma}{2\delta}\nonumber\\
\overset{(c)}{\le}&R\sqrt{\frac{8cG\sqrt{d}}{T\delta}\mathbb{E}[\phi(x_0)-\phi_{\min}+2\delta G]}+\frac{21RG\sqrt{d}}{\sqrt{B}},\nonumber
\end{align}
where (a) uses item 1 of Proposition \ref{lemma:Fdelta}, (b) uses $\phi\overset{\rm def}{=}F+g$, $\phi_{\min}\overset{\rm def}{=}\min_{x\in\mathbb{R}^d}\phi(x)$ and Lemma \ref{lemma:gtavg_err}, (c) uses stepsize $\gamma=\frac{1}{R}\sqrt{\frac{2\delta}{TcG\sqrt{d}}\mathbb{E}[\phi(x_0)-\phi_{\min}+2\delta G]}$ and constant batchsize $B_t\equiv B$. 

Furthermore, we can select the following hyperparameters. 
\begin{align}
T=&\frac{32cGR^2\sqrt{d}}{\delta \epsilon^2}\big[\mathbb{E}[\phi(x_0)]-\phi_{\min}+2\delta G\big]=\mathcal{O}(GR^2d^{1/2}\delta^{-1}\epsilon^{-2}),\label{eq:T_G1X2}\\
B=&1764dR^2G^2\epsilon^{-2}=\mathcal{O}(dR^2G^2\epsilon^{-2}).\label{eq:B_G1X2}
\end{align}
Then substituting the hyperparameters above into the convergence rate (\ref{eq:rate_G1X2}), we obtain the following bound, which based on item 3 of Proposition \ref{prop:CGGSP} implies that there exists at least one $(\delta,\epsilon)$-CGGSP in $\{x_t\}_{t=0}^{T-1}$. 
\begin{align}
\min_{0\le t\le T-1}\mathbb{E}[\mathcal{W}_h(x_t,\nabla F_{\delta}(x_t))]\le \mathbb{E}[\mathcal{W}_h(x_{\widetilde{T}},\nabla F_{\delta}(x_{\widetilde{T}}))]\le \epsilon.\nonumber
\end{align}

\section{Proof of Theorem \ref{thm:GCG_VR}}\label{sec:GCG_VR}
We can prove the convergence rate (\ref{eq:rate_G2X2}) as follows.
\begin{align}
&\mathbb{E}\big[\mathcal{W}_h[x_{\widetilde{T}},\nabla F_{\delta}(x_{\widetilde{T}})]\big]\nonumber\\
\overset{(a)}{\le}&\frac{1}{T\gamma}\mathbb{E}[F(x_0)+h(x_0)-F(x_T)-h(x_T)+2\delta G]+\frac{2R}{T}\sum_{t=0}^{T-1}\sqrt{\mathbb{E}\big[\|\nabla F_{\delta}(x_t)-g_t\|^2\big]}\nonumber\\
&+\frac{2cG\sqrt{d}R^2\gamma}{\delta}\nonumber\\
\overset{(b)}{\le}&\frac{1}{T\gamma}\mathbb{E}[\phi(x_0)-\phi_{\min}+2\delta G]+\frac{2R}{T}\sum_{t=0}^{T-1}\sqrt{\frac{d^2 G^2}{\delta^2 B_1} \sum_{j=n_t q+1}^t\|x_j-x_{j-1}\|^2+\frac{16 \sqrt{2 \pi} d G^2}{B_0}}\nonumber\\
&+\frac{2cG\sqrt{d}R^2\gamma}{\delta}\nonumber\\
\overset{(c)}{\le}&\frac{1}{T\gamma}\mathbb{E}[\phi(x_0)-\phi_{\min}+2\delta G]+\frac{2R}{T}\sum_{t=0}^{T-1}\sqrt{\frac{d^2 G^2}{\delta^2} (2R\gamma)^2+\frac{16 \sqrt{2 \pi} d G^2}{B_0}} +\frac{2cG\sqrt{d}R^2\gamma}{\delta}\nonumber\\
\le&\frac{1}{T\gamma}\mathbb{E}[\phi(x_0)-\phi_{\min}+2\delta G]+\frac{GR^2\gamma}{\delta}(4d+2c\sqrt{d})+\frac{13RG\sqrt{d}}{\sqrt{B_0}}\nonumber\\
\overset{(d)}{=}&2R\sqrt{\frac{G(4d+2c\sqrt{d})}{T\delta}\mathbb{E}[\phi(x_0)-\phi_{\min}+2\delta G]}+\frac{13RG\sqrt{d}}{\sqrt{B_0}},\nonumber
\end{align}
where (a) uses Eq. (\ref{prerate:GCG}) (we can see it still holds by following its proof in Appendix \ref{sec:GCG_batch}), (b) uses $\phi\overset{\rm def}{=}F+g$, $\phi_{\min}\overset{\rm def}{=}\min_{x\in\mathbb{R}^d}\phi(x)$ and Lemma \ref{lemma:gtVR_err}, (c) uses $t<(n_t+1)q$, $B_1=q$, $\|x_j-x_{j-1}\|=\gamma\|y_{j-1}-x_{j-1}\|\le 2R\gamma$ (based on Eq. (\ref{eq:GCG_next}) and Lemma \ref{lemma:GCG_inR}), and (d) uses stepsize $\gamma=\frac{1}{R}\sqrt{\frac{\delta\mathbb{E}[\phi(x_0)-\phi_{\min}+2\delta G]}{TG(4d+2c\sqrt{d})}}$. 

Furthermore, we can select the following hyperparameters. 
\begin{align}
T=&\frac{16GR^2(4d+2c\sqrt{d})}{\delta \epsilon^2}\big[\mathbb{E}[\phi(x_0)]-\phi_{\min}+2\delta G\big]=\mathcal{O}(GR^2d\delta^{-1}\epsilon^{-2}),\label{eq:T_G2X2}\\
B_0=&676dR^2G^2\epsilon^{-2},\label{eq:B0_G2X2}\\
B_1=&q=26RG\epsilon^{-1}\sqrt{d}.\label{eq:B1_G2X2}
\end{align}
Then substituting the hyperparameters above into the convergence rate (\ref{eq:rate_G1X2}), we obtain the following bound, which based on item 3 of Proposition \ref{prop:CGGSP} implies that there exists at least one $(\delta,\epsilon)$-CGGSP in $\{x_t\}_{t=0}^{T-1}$. 
\begin{align}
\min_{0\le t\le T-1}\mathbb{E}[\mathcal{W}_h(x_t,\nabla F_{\delta}(x_t))]\le \mathbb{E}[\mathcal{W}_h(x_{\widetilde{T}},\nabla F_{\delta}(x_{\widetilde{T}}))]\le \epsilon.\nonumber
\end{align}


\end{document}